\newcommand{\R}{\mathds{R}}
\newcommand{\C}{\ensuremath{\mathbb{C}}}
\newtheorem {theorem} {Theorem}
\newtheorem {prop} [theorem] {Proposition}
\newtheorem {lemma} [theorem] {Lemma}
\newtheorem {remark} {Remark}
\newtheorem*{miTeorema}{Teorema}
\newtheorem{theorem A}{Theorem A}
\newtheorem{theorem B}{Theorem B}
\newtheorem{theorem C}{Theorem C}
\begin{document}

\title[Periodic solutions in a tumor-immune competition system with time-delay and chemotherapy effects]{Periodic solutions in a tumor-immune competition system with time-delay and chemotherapy effects}

\author[Pablo Amster, John A. Arredondo, Andr\'es Rivera]
{Pablo Amster, Andr\'es Rivera, John A. Arredondo}

\address{$^1$ Departamento de Matem\'aticas
Universidad de Buenos Aires,
Argentina.}
\address{
$^2$ Departamento de Ciencias Naturales y Matem\'aticas
Pontificia Universidad Javeriana Cali, Facultad de Ingenier\'ia y Ciencias,
Calle 18 No. 118-250 Cali, Colombia.}
\address{
$^3$ Departamento de Matemáticas Fundación Universitaria Konrad Lorenz, Facultad de Ciencias e Ingeniería, Cra 9 bis 62-43, Bogotá-Colombia.}

\email{pamster@dm.edu.ar, amrivera@javerianacali.edu.co, alexander.arredondo@konradlorenz.edu.co}\emph{}

\subjclass[2010]{92B05,92D25,37M25,34C23.}

\keywords{Tumor-immune system; Chemotheraphy; Delay equation; Hopf Bifurctation; Stability switch;Periodic Solutions .}

\date{}
\dedicatory{}
\maketitle

\begin{center}\rule{0.9\textwidth}{0.1mm}
\end{center}
\begin{abstract}
The main purpose of this paper is to analyze the dynamics of the system of time-delay differential equations (DDEs)
\begin{equation*}
\begin{split}
\dot{T}(t)&=T(t) f(t,T(t))-\gamma E(t)T(t),\\
\dot{E}(t)&=\sigma+ \frac{pE(t)T(t-\tau_1)}{g+a T(t-\tau_1)}-\frac{mE(t)T(t-\tau_2)}{g+a T(t-\tau_2)}-\eta E(t),
\end{split}
\end{equation*}
where $T=T(t)$ and $E=E(t)$  represent the concentrations of tumor and effector cells at the time $t$. The coefficients $\sigma$, $\mu$, $\gamma$, and $\eta$ are all positive, and $f(t, T)$ represents the relative growth rate of tumor cells, corresponding to a generalized logistic growth function that describes periodic time chemotherapeutic effects. The parameter $\tau_1 \in \mathbb{R}_{\ge 0}$ is the response time delay of the immune system (mediated by effector cells) to an invasion of tumor cells, while $\tau_2 \in \mathbb{R}_{\ge 0}$ represents the time delay of tumor cells in response to the appearance of effector cells.
\end{abstract}


\section{Introduction}

A tumor is an abnormal growth of body tissue that the immune system can normally control in most cases. The formation of a tumor often occurs due to an immune system issue in the body \cite{friedman2012}.  
Many works in the literature describe tumor growth, e.g.,
The tumor growth curves are
classically exponential, logistic, and Gompertz using a population approach.  In analyzing the Gompertz model, several studies have reported a striking correlation between the two parameters of the model, which could be used to reduce dimensionality and improve predictive power. Candidate models of tumor growth included exponential, logistic, and Gompertz models. The exponential and more notable logistic models failed to describe the experimental data, whereas the Gompertz model generated better fits \cite{Murphy}.

Several mathematical models have been developed for the interactions between the immune system and a growing tumor. In \cite{Jianquan} for $T:=T(t)$ and $E:=E(t)$  functions depending of time representing the concentrations of tumor and effector cells at time $t$, the authors consider a logistic relative growth rate, i. e.  $f(T)=r\big(1-\frac{T}{K}\big)$, where $r$ and $K$ are the maximal growth rate and the carrying capacity of the biological environment of the tumor cells, the following system is obtained: 
\begin{equation}\label{main eq 2}
\begin{split}
\dot{T}&=rT \Big(1-\frac{T}{K}\Big)-\gamma E(t)T(t),\\
\dot{E}&=\sigma+ \frac{pE(t)T(t-\tau)}{g+a T(t-\tau)}-E(t)\big(mT(t)+\eta\big).
\end{split}
\end{equation}
This manuscript  considers the following periodic tumor-immune system with delays inspired by the work of J. Li et. al. \cite{LI} 
\begin{equation}\label{main eq 3}
\begin{split}
\dot{T}(t)&=T(t) f(t,T(t))-\gamma E(t)T(t),\\
\dot{E}(t)&=\sigma+ \frac{pE(t)T(t-\tau_1)}{g+a T(t-\tau_1)}-\frac{mE(t)T(t-\tau_2)}{g+a T(t-\tau_2)}-\eta E(t),
\end{split}
\end{equation}

\begin{table}[h]
\centering
\caption{Description of parameters of Eq. $\eqref{main eq 3}$.}
\renewcommand{\arraystretch}{1.3} 
\begin{tabular}{@{}p{4.5 cm}p{10cm}@{}}
\toprule
\textbf{Variables/Parameters} & \textbf{Interpretation} \\
\midrule
$T(t)$ & Concentrations of tumor cells ($T$-cells) at time $t$\\
$E(t)$ & Concentrations of effector cells ($E$-cells) at time $t$\\
$T f(t,T)$ & Time variable growth rate function of the tumor cells\\
$\gamma$ & Rate at which effector cells damage (kill) the tumor cells\\
$\sigma$ & Constant input of effector cells\\
$p E T/(g+aT)$ & Recruiment term of $E$-cells by stimulation of $T$-cells\\
$p$ & Maximum recruitment coefficient of $E$-cells
cells by stimulation of $T$-cells\\
$m E T/(g+aT)$ & Neutralization term of $E$-cells by stimulation of $T$-cells\\
$m$ & Maximum decline rate of $E$-cells by $T$-cells \\
$g,a$ & Parameters associated to the Holling's type functional response $\frac{T}{g+aT}$. $g^{-1}$-rate attack. $a$-handling time\\
\bottomrule
\end{tabular}
\label{table:parameters Jianquan model}
\end{table}

for which, in the absence of effector cells, the relative growth rate of tumor cells is given by the time-periodic Richards-type equation, see \cite{TSOULARIS200221}
\[
f(t,T)=r\Big[\frac{\big(1-T^{\beta}\big)}{\beta}-b(t)\Big],
\]
where $r>0$ is the Malthusian parameter of the tumor cells, $b(t)\in C(\mathbb{R}/q\mathbb{Z})$ (continuous and $q$-periodic function) that represents chemotherapeutic effects on the tumor cell mass, see \cite{Panetta} and $0<\beta \leq 1$ is a real parameter that provides well-known growth functions like
\begin{equation*}
\begin{split}
\text{Logistic relative growth rate ($\beta=1$)} \quad \Rightarrow \quad f(r,T)=r\big[(1-T)-b(t)\big],\\
\text{Gompertz relative growth rate ($\beta \searrow 0$) \, $\Rightarrow$} \quad f(r,T)=r\big[\ln{\frac{1}{T}}-b(t)\big].\\
\end{split}
\end{equation*}
Notice that, without loss of generality, we assumed that the carrying capacity of the biological environment of the tumor cells is equal to $1$.

In this study, we focus mainly on the effects of time delays on the dynamics of tumor and effector cell concentrations and the existence of periodic solutions of \eqref{main eq 3} as a continuation of specific equilibrium solutions. The rest of the paper is organized into five sections and one appendix.  In Section \ref{secTB} we introduce some well-known results on the existence and stability of equilibria solutions and periodic solutions for delayed systems. In this section \ref{section Basic results}, we obtain the equilibrium points for equation \ref{main eq 3} in terms of the different parameters associated with this system. In sections \ref{MainResults} and \ref{continuacion}  we present our main results: First, we provide an explicit interval of delay parameters for the local stability of two of the equilibrium points in the delayed autonomous. Secondly, we state
and prove two results about the existence of periodic solutions of \eqref{main eq 3} via the continuation method adapted to our work. In Section  \ref{NV}  we perform some computational results, often using graphs and phase portraits, to visually validate the theoretical findings from the stability and bifurcation analysis.

\section{Theoretical Background}\label{secTB}
This section presents the theoretical
background for the main aspects discussed in this document. Firstly,  we analyze the linear stability of the equilibria of  \eqref{main eq 3} together with the stability switch curves in the plane $(\tau_1,\tau_2)\in \mathbb{R}^2_{+}$.  Secondly, we provide a theoretical scenario that will allow us to study the existence of $\omega$-periodic solutions of \eqref{main eq 3} considered as a perturbation of an autonomous nondelayed system.

\textbf{On the stability of equilibria for delayed linear systems with two delays}. We finish this section with the following general result on the following linear delay system
\begin{equation}\label{linear system with one delay}
\dot{Y}=A_{0}Y+A_1Y(t-\tau_1)+A_2Y(t-\tau_2),
\end{equation}
with  $A_0,A_1,A_2 \in \mathbb{M}_{2\times 2}$ constant real matrices, $Y\in \mathbb{R}^{2}$ and $\tau_1,\tau_2 \geq 0$. Assume that $\displaystyle{\det A_1=\det A_2=0}$. Following the results in Appendix B, the associated characteristic equation is given by:
\begin{equation}\label{characteristic new general}
\mathcal{P}(\lambda,\tau_1,\tau_2)=0, \quad \text{with} \quad \mathcal{P}(\lambda,\tau_1,\tau_2)=p_{0}(\lambda)+p_{1}(\lambda)e^{-\lambda \tau_1}+p_{2}(\lambda)e^{-\lambda \tau_2}+p_{3}(\lambda)e^{\lambda (\tau_1+\tau_2)}
\end{equation}
where
\begin{equation}\label{coeficientes}
\begin{split}
p_{0}(\lambda)&=\lambda^{2}-(tr A_0)\lambda+\det A_0,\\
p_{1}(\lambda)&=C_{A_0A_1}-(tr A_1)\,\lambda, \hspace{2.0 cm} \text{and} \quad \quad  C_{A_{i}A_{j}}=\det{(A^{1}_{i}|A^2_{j})} + \det{(A_{j}^1|A_{i}^2)}, \\
p_2(\lambda)&=C_{A_0A_2}-(tr A_2)\,\lambda,\\
p_3(\lambda)&=C_{A_1A_2},
\end{split}
\end{equation}
Here, $\displaystyle{\det(a^1|b^2)}$ denotes the matrix with the first column and the second column of the matrices. 
\[
A=[a^1,a^2] \quad \text{and} \quad B=[b^1,b^2],
\]
respectively. Notice that the analytic function $p_{i}(\lambda)$ satisfies the following
\begin{itemize}
    \item[a.] $\text{deg}(p_{0}(\lambda))=2>1\geq\text{deg}(p_{i}(\lambda))$ with $i=1,2,3.$
     \item[b.] $p_i(\lambda)$ have no common factors.
     \item[c.] For all $s\in \mathbb{R}$, $p_{l}(is) \neq 0$, for $l=1,2,3$.
    \item[d.] On $\C$ we have
    \[
    \lim_{|\lambda| \rightarrow \infty} \sup \left(\left|\frac{p_1(\lambda)}{p_0(\lambda)}\right|+\left|\frac{p_2(\lambda)}{p_0(\lambda)}\right|+\left|\frac{p_3(\lambda)}{p_0(\lambda)}\right|\right)=0.
    \]
\end{itemize}
We need the following condition
\begin{itemize}
\item[H1)] Finite number of characteristic roots on $\C_{+}=\left\{\lambda \in \C: \text{Re}(\lambda)>0\right\}$
    \item[H2)] $\lambda=0$ is not a characteristic root for any $\tau_1, \tau_2$, i.e., $p_0(0)+p_1(0)+p_2(0)+p_{3}(0)\neq 0$.
\end{itemize}
In the following, we present some well-known results on the solutions $\lambda=\lambda(\tau_1,\tau_2)$ of \eqref{characteristic new general}. The simplest case emerges when both delays are zero. Direct and easy computations prove the following. 
\begin{lemma}
For $\tau_1=\tau_2=0$, the roots $\lambda=\lambda(0,0)$ satisfy the quadratic equation
\[
P(\lambda,0,0)=\lambda^{2}-tr\mathcal{A}\,\lambda+\det \mathcal{A},
\]
with $\mathcal{A}=A_0+A_1+A_2$. Moreover, if
\[
tr \mathcal{A}<0 \quad \text{and} \quad \det \mathcal{A}>0,
\]
the trivial solution $Y(t)=\textbf{0}$ of \eqref{linear system with one delay} is locally asymptotically stable. Meanwhile, if
\[
tr \mathcal{A}>0, \quad \text{or} \quad tr \mathcal{A}=0 \quad \text{and} \quad \det \mathcal{A}<0.
\]
hold, is unstable. 
\end{lemma}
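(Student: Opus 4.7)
The plan is to substitute $\tau_1=\tau_2=0$ into the characteristic polynomial \eqref{characteristic new general} and recognize the result as the characteristic polynomial of the matrix $\mathcal{A}=A_0+A_1+A_2$, after which a Routh--Hurwitz-type analysis for a quadratic yields the stability statements.

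First I would set both delays to zero, so that every exponential factor collapses to $1$. Using the explicit formulas in \eqref{coeficientes}, this gives
\begin{equation*}
\mathcal{P}(\lambda,0,0)=\lambda^{2}-\bigl(\mathrm{tr}\,A_0+\mathrm{tr}\,A_1+\mathrm{tr}\,A_2\bigr)\lambda+\det A_0+C_{A_0A_1}+C_{A_0A_2}+C_{A_1A_2}.
\end{equation*}
The trace term is trivially $\mathrm{tr}\,\mathcal{A}$ by linearity of the trace. The constant term requires an elementary expansion: using multilinearity of the determinant in the columns, for a $2\times 2$ sum of matrices one gets
\begin{equation*}
\det\mathcal{A}=\sum_{i,j=0}^{2}\det\bigl(A_{i}^{1}\,\big|\,A_{j}^{2}\bigr)=\sum_{i=0}^{2}\det A_i+\!\!\sum_{0\le i<j\le 2}\!\!C_{A_iA_j}.
\end{equation*}
Invoking the standing hypothesis $\det A_1=\det A_2=0$, the diagonal terms collapse to $\det A_0$, and the constant term of $\mathcal{P}(\lambda,0,0)$ is precisely $\det\mathcal{A}$. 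Thus $\mathcal{P}(\lambda,0,0)=\lambda^{2}-\mathrm{tr}\,\mathcal{A}\,\lambda+\det\mathcal{A}$, which is exactly the claimed quadratic.

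Once the reduction is established, the stability assertions follow from the Routh--Hurwitz criterion applied to a quadratic (equivalently, from Vieta's formulas on $\lambda_1+\lambda_2=\mathrm{tr}\,\mathcal{A}$ and $\lambda_1\lambda_2=\det\mathcal{A}$). For asymptotic stability, both roots must lie in the open left half-plane, which for a real quadratic $\lambda^{2}+b\lambda+c$ is equivalent to $b,c>0$; with $b=-\mathrm{tr}\,\mathcal{A}$ and $c=\det\mathcal{A}$ this is the hypothesis $\mathrm{tr}\,\mathcal{A}<0$, $\det\mathcal{A}>0$. Conversely, if $\mathrm{tr}\,\mathcal{A}>0$ then $\lambda_1+\lambda_2>0$ forces at least one root to have positive real part; and if $\mathrm{tr}\,\mathcal{A}=0$ with $\det\mathcal{A}<0$, the discriminant $-4\det\mathcal{A}$ is positive so the roots are real and opposite in sign, again producing a positive root. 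In either case the trivial solution is unstable.

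The only genuinely non-routine step is the column-wise multilinear expansion of $\det(A_0+A_1+A_2)$ and the matching with the coefficients $C_{A_iA_j}$ defined in \eqref{coeficientes}; everything else reduces either to definitions or to the textbook quadratic Routh--Hurwitz test. I expect no obstacle beyond bookkeeping, and the hypothesis $\det A_1=\det A_2=0$ is what makes the identification clean.
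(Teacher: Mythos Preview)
Your argument is correct and matches what the paper intends: the authors state only that ``direct and easy computations prove the following,'' and your substitution $\tau_1=\tau_2=0$ in \eqref{characteristic new general} together with the column-multilinearity identity $\det\mathcal{A}=\det A_0+\det A_1+\det A_2+C_{A_0A_1}+C_{A_0A_2}+C_{A_1A_2}$ (using $\det A_1=\det A_2=0$) is exactly such a computation, followed by the standard Routh--Hurwitz test for a real quadratic. One could shortcut the algebra by noting that at $\tau_1=\tau_2=0$ the system \eqref{linear system with one delay} collapses to $\dot Y=\mathcal{A}Y$, whose characteristic polynomial is $\det(\lambda I-\mathcal{A})=\lambda^{2}-\mathrm{tr}\,\mathcal{A}\,\lambda+\det\mathcal{A}$ directly; your route through the $p_i$'s verifies the same identity from the delayed characteristic function side, which is a useful consistency check.
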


\textbf{On the stability switching curves.} In the next lines, we present some well-known results about the stability switching curves of delayed systems with two
delays and delay-independent coefficients; see \cite{Lin}
\begin{lemma} Assume $\tau_1,\tau_2 \in \mathbb{R}_{+}$. As $(\tau_1,\tau_2)$ varies continuously in $\mathbb{R}^2_{+}$ the number of characteristic roots (including their multiplicities) of $P(\lambda,\tau_1,\tau_2)$ on $\C_{+}$, can change only if a characteristic roots appear on or cross the imaginary axis.
\end{lemma}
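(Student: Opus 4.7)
The strategy is a standard continuity-of-zeros argument based on Rouché's theorem (equivalently, the argument principle) applied to the analytic family $\lambda\mapsto \mathcal{P}(\lambda,\tau_1,\tau_2)$. The counting function
\[
N(\tau_1,\tau_2):=\#\{\lambda\in \C_+:\mathcal{P}(\lambda,\tau_1,\tau_2)=0\}\qquad (\text{with multiplicity})
\]
is finite by H1, and the plan is to prove that it is locally constant on the open set of delays for which $\mathcal{P}(\cdot,\tau_1,\tau_2)$ has no root on $i\mathbb{R}$.

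Fix $(\tau_1^*,\tau_2^*)\in \mathbb{R}_+^2$ such that $\mathcal{P}(\cdot,\tau_1^*,\tau_2^*)$ has no root on $i\mathbb{R}$. The first and central step is a uniform bound at infinity: I claim there exist $R>0$ and a neighborhood $U$ of $(\tau_1^*,\tau_2^*)$ such that
\[
|\mathcal{P}(\lambda,\tau_1,\tau_2)|\ge \tfrac{1}{2}|p_0(\lambda)|>0 \qquad \text{for all } \lambda\in \overline{\C_+},\ |\lambda|\ge R,\ (\tau_1,\tau_2)\in U.
\]
This uses property (d) together with the fact that each exponential factor $e^{-\lambda\tau_j}$ (and the cross term coming from $p_3$) has modulus at most one on $\overline{\C_+}$ uniformly in the delays, while $|p_0(\lambda)|\sim |\lambda|^2$. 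Consequently, for every $(\tau_1,\tau_2)\in U$, all roots in $\overline{\C_+}$ lie in the bounded region $D_R=\{\lambda\in\C:\mathrm{Re}(\lambda)>0,\ |\lambda|<R\}$.

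Since $\mathcal{P}(\cdot,\tau_1^*,\tau_2^*)$ has no zero on the compact piece $\overline{D_R}\cap i\mathbb{R}$, uniform continuity of $\mathcal{P}$ on compact sets allows me to shrink $U$ so that $\mathcal{P}(\cdot,\tau_1,\tau_2)$ has no zero on the full boundary $\partial D_R$ for every $(\tau_1,\tau_2)\in U$. The argument-principle representation
\[
N(\tau_1,\tau_2)=\frac{1}{2\pi i}\oint_{\partial D_R}\frac{\partial_\lambda \mathcal{P}(\lambda,\tau_1,\tau_2)}{\mathcal{P}(\lambda,\tau_1,\tau_2)}\,d\lambda
\]
then defines an integer-valued continuous function of $(\tau_1,\tau_2)\in U$, hence locally constant. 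Covering any continuous curve in $\mathbb{R}_+^2$ that avoids the locus $\{(\tau_1,\tau_2):\mathcal{P}(\cdot,\tau_1,\tau_2)\text{ has a root on } i\mathbb{R}\}$ by finitely many such neighborhoods and invoking connectedness, $N$ is constant along the curve. Thus any change in $N$ forces the curve to meet this locus, which is precisely the statement of the lemma.

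The main obstacle is establishing the uniform estimate at infinity. Because the characteristic function contains exponential factors depending on the parameters $\tau_1,\tau_2$, one must verify that no root can escape to or enter from infinity as the delays vary. It is exactly the combination of hypothesis H1 (finiteness of the count) and property (d) (dominance of $p_0$ at infinity) that rules this out and makes the argument-principle count stable across neighborhoods containing no imaginary-axis root.
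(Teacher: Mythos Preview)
The paper does not supply its own proof of this lemma; it is quoted as a ``well-known result'' with a reference to \cite{Lin}. Your argument is the standard one underlying that reference: confine all $\C_+$--roots to a fixed half-disk via the growth estimate coming from property (d) and the bound $|e^{-\lambda\tau_j}|\le 1$ on $\overline{\C_+}$, then invoke the argument principle on this half-disk to see that the root count is an integer-valued continuous (hence locally constant) function of $(\tau_1,\tau_2)$ wherever there are no imaginary roots. This is correct and is precisely the mechanism behind the cited result.

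One remark: you implicitly (and rightly) treat the $p_3$ term as carrying the factor $e^{-\lambda(\tau_1+\tau_2)}$. The displayed formula \eqref{characteristic new general} in the paper has a sign typo in that exponent; with the plus sign the exponential would be unbounded on $\C_+$ and your estimate at infinity would fail. The later computations in the paper (e.g.\ the expression of $H(\lambda,\tau_1)$ in \eqref{coeficientes}) confirm that the intended exponent is $-\lambda(\tau_1+\tau_2)$, so your reading is the correct one.
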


From this result, we are looking for solutions of \eqref{characteristic new general} of the form $\displaystyle{\lambda=i\vartheta}$ with $\vartheta \in \mathbb{R}_{+}$

\textbf{Periodic solutions.} The end of this section is devoted to establish a general abstract result for the existence of $\omega$-periodic solutions of the system
\begin{equation}
\label{sys-impl}
X'(t)= \Phi(X(t),y),
\end{equation}
where the parameter $y$ belongs to an appropriate Banach space $Y$. 
In more precise terms, let $C_\omega$ denote the set of continuous $\omega$-periodic functions $X:\R\to\R^n$ and $C_\omega^1:=C_\omega\cap C^1(\R,\R^n)$.  
Assume that $\mathcal O\subset C_\omega\times Y$ is open and $\Phi:\mathcal O\to C_\omega$ is a continuously Fr\'echet differentiable operator, then the operator $\mathcal F:C_\omega^1\times Y\to \R^n$ given by 
$$\mathcal F(X, y):= X' - \Phi(X,y)$$ is well defined and continuously differentiable. For instance, with system (\ref{main eq 3}) in mind, we may set
$b(t):= b_0 + \varphi(t)$ with $\varphi\in C_\omega$,   $\tau:= (\tau_1,\tau_2)$ and  
$Y:=C_\omega\times \R^2$. The right-hand side of the system, which is generically written as $F(t,X(t),X(t-\tau_1),X(t-\tau_2))$, can be equivalently expressed in the form $\Phi(X,\varphi,\tau)$ with the help of the continuous operators  
 $X\mapsto X_{\tau_j}$ 
  defined on $C_\omega$ by $X_{\tau_j}(t):= X(t-\tau_j)$ for $j=1,2$. 
When $y=(\varphi, \tau)=0$, the system is autonomous 
and has no delay; given an equilibrium point $X_0$, our goal consists in finding a smooth branch of $\omega$-periodic solutions $X=X(y)$ such that
$X(0)=X_0$.  The following result is a direct consequence of the Implicit Function Theorem, combined with the Fredholm alternative: 
\begin{theorem}\label{thm-impl}
    In the previous context, let $X_0$ be a solution of the system $\mathcal F(X,0)=0$
 and assume that the linear system 
$$Z'(t)= D_X\Phi(X_0,0)Z(t),
$$
where    $D_X$ denotes  the Fr\'echet partial derivative  with respect to $X$, 
has no nontrivial $\omega$-periodic solutions. 
Then there exist open sets $\mathcal V,\mathcal W$ with   $0\in \mathcal V\subset Y$, $X_0\in \mathcal W\subset C_\omega$ and a unique smooth function $X:\mathcal V\to\mathcal W$ such that $X(0)=X_0$ and $X(y)$ is  a solution of (\ref{sys-impl}) for each $y\in\mathcal V$. 
\end{theorem}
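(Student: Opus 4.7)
The plan is to apply the Implicit Function Theorem (IFT) to the map $\mathcal F$ at the point $(X_0,0)$. Since $\mathcal F$ is continuously Fr\'echet differentiable by hypothesis and $\mathcal F(X_0,0)=0$ because $X_0$ solves $\mathcal F(X,0)=0$, the only thing to verify is that the partial derivative $L:=D_X\mathcal F(X_0,0):C_\omega^1\to C_\omega$ is a Banach-space isomorphism. A direct computation gives $LZ=Z'-AZ$ where $A:=D_X\Phi(X_0,0)$. In the intended applications $\Phi(\cdot,0)$ is induced by a smooth finite-dimensional vector field through the substitution operator, so $A$ acts pointwise as multiplication by a continuous $\omega$-periodic matrix $M(t)$ (a constant matrix when $X_0$ is an equilibrium).

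Injectivity of $L$ is immediate from the hypothesis: any $Z\in\ker L$ is a classical $\omega$-periodic solution of $Z'=M(t)Z$, hence trivial. For surjectivity---the Fredholm-alternative step alluded to in the statement---I would argue directly via the variation of constants formula. Given $g\in C_\omega$, let $U(t)$ be the fundamental matrix of $Z'=M(t)Z$ with $U(0)=I$. Any solution of $Z'=M(t)Z+g$ has the form
$$Z(t)=U(t)z_0+U(t)\int_0^t U(s)^{-1}g(s)\,ds,$$
and imposing $Z(\omega)=Z(0)$ reduces to the finite-dimensional linear system $(I-U(\omega))z_0=U(\omega)\int_0^\omega U(s)^{-1}g(s)\,ds$. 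Injectivity just established means precisely that the monodromy matrix $U(\omega)$ has no eigenvalue equal to $1$, so $I-U(\omega)$ is invertible, $z_0$ is uniquely determined, and the required preimage $Z\in C_\omega^1$ exists.

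With $L$ bijective, the Open Mapping Theorem promotes it to a topological isomorphism between the Banach spaces $C_\omega^1$ (equipped with its graph norm) and $C_\omega$. The IFT then yields open neighborhoods $\mathcal V\subset Y$ of $0$ and $\mathcal W\subset C_\omega^1$ of $X_0$, together with a unique $C^1$ map $X:\mathcal V\to\mathcal W$ with $X(0)=X_0$ solving $\mathcal F(X(y),y)=0$; regarding $\mathcal W$ as a subset of $C_\omega$ gives the statement. The main obstacle is bookkeeping rather than analysis: one must be precise about the functional-analytic set-up---graph norm on $C_\omega^1$, the pointwise form of $A$, and, when later applying the result to the delayed system \eqref{main eq 3}, the joint smoothness of the shift operators $X\mapsto X_{\tau_j}$ in $X$ and $\tau_j$, which is what guarantees that $\Phi$ is genuinely $C^1$ on an open subset of $C_\omega\times Y$.
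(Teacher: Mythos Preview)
Your proposal is correct and follows precisely the route the paper indicates: the paper gives no detailed proof, stating only that the result ``is a direct consequence of the Implicit Function Theorem, combined with the Fredholm alternative.'' You have carried out exactly that programme, computing $D_X\mathcal F(X_0,0)Z=Z'-AZ$, reading injectivity from the hypothesis, and establishing surjectivity so that the IFT applies.

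The only nuance worth noting is that your surjectivity step is a hands-on monodromy construction (solve $(I-U(\omega))z_0=\cdots$), which presupposes that $A$ acts as pointwise multiplication by a periodic matrix; you acknowledge this explicitly. The paper's phrasing ``Fredholm alternative'' would also cover the abstract route: $Z\mapsto Z'$ is Fredholm of index zero from $C_\omega^1$ to $C_\omega$, and subtracting the bounded operator $A$ (compact when factored through the inclusion $C_\omega^1\hookrightarrow C_\omega$) keeps index zero, so injective implies surjective without invoking variation of constants. Either argument suffices here, and for the application to \eqref{main eq 3}---where $X_0$ is an equilibrium and $A$ is the constant Jacobian matrix---your concrete version is entirely adequate and arguably more transparent.
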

Focusing on the application 
to (\ref{main eq 3}), it shall be useful to recall the following simple criterion:
\begin{lemma}\label{iso}
    Let $M\in \R^{n\times n}$ and consider the linear operator $L:C^1_\omega\to C_\omega$ given by $LZ:=Z' - MZ$. 
    Then $\ker(L)=\{0\}$ if and only if 
    $\pm \frac {2k\pi}\omega i$ is not an eigenvalue of $M$ for any 
$k\in\mathbb N_0$.
\end{lemma}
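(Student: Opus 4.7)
The plan is to reduce the question about $\ker(L)$ to a purely finite-dimensional spectral question about the monodromy matrix $e^{\omega M}$, and then invoke the spectral mapping theorem to translate that back to a condition on the eigenvalues of $M$ itself.

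First I would observe that $Z\in\ker(L)$ means exactly that $Z\in C^1_\omega$ solves the linear ODE $Z'=MZ$. By the classical theory of linear constant-coefficient systems, every such solution has the form $Z(t)=e^{tM}Z(0)$. The $\omega$-periodicity requirement $Z(\omega)=Z(0)$ is therefore equivalent to $Z(0)\in\ker\bigl(e^{\omega M}-I\bigr)$. Conversely, any choice of $Z(0)$ in that kernel produces a genuine $\omega$-periodic $C^1$ solution. Hence $\ker(L)=\{0\}$ if and only if $1$ is not an eigenvalue of $e^{\omega M}$.

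Next I would apply the spectral mapping theorem for matrix exponentials: the eigenvalues of $e^{\omega M}$ are precisely $\{e^{\omega\mu}:\mu\in\sigma(M)\}$, with multiplicities preserved. Thus $1\in\sigma(e^{\omega M})$ if and only if there exists $\mu\in\sigma(M)$ with $e^{\omega\mu}=1$, which forces $\mu=\frac{2k\pi}{\omega}i$ for some $k\in\Z$. Because $M$ is real, its spectrum is invariant under complex conjugation, so the existence of such a $\mu$ for some $k\in\Z$ is equivalent to $\pm\frac{2k\pi}{\omega}i$ being an eigenvalue of $M$ for some $k\in\mathbb{N}_0$. Combining this with the previous step yields the desired equivalence.

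I do not anticipate a serious obstacle: the only subtlety is making sure that the periodicity condition is encoded correctly (periodic $C^1$ solutions, not just continuous ones, but any continuous periodic solution of $Z'=MZ$ is automatically $C^\infty$, so this is harmless) and that the real-versus-complex bookkeeping in the last step is handled cleanly. Stating the spectral mapping theorem in a one-line form suitable for finite-dimensional matrices should suffice to keep the proof compact.
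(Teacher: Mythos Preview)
Your argument is correct and is the standard route: identify $\ker(L)$ with $\ker(e^{\omega M}-I)$ via the fundamental solution, then use the spectral mapping theorem for the matrix exponential and the conjugate-symmetry of the spectrum of a real matrix. There is no gap.

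As for comparison: the paper does not actually supply a proof of this lemma. It is introduced with the phrase ``it shall be useful to recall the following simple criterion'' and then left unproved, being treated as a well-known fact. Your write-up therefore fills in exactly what the paper omits, and the approach you chose is the natural one any reader would expect.
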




\section{Basic Results}\label{section Basic results}
In this section, we study some basic results on the system \eqref{main eq 3}. Firstly, let us define $X:=(T,E)^{tr}$. Under this notation,
\[
X(t)=(T(t),E(t))^{tr}, \quad X_{\tau_1}(t):=X(t-\tau_1), \quad X_{\tau_2}(t):=X(t-\tau_2), 
\]
is a solution of \eqref{main eq 3} if $X=X(t)$ satisfies the non-autonomous delayed system 
\begin{equation}\label{delayed system}
\dot{X}(t)=F(t,X(t),X_{\tau_1}(t),X_{\tau_2}(t)),
\end{equation}
 with $F(t,X(t),X_{\tau_1}(t),X_{\tau_2}(t))$ given by
\begin{equation}\label{vector field}
F(t,X,X_{\tau_1},X_{\tau_2})=\begin{pmatrix}
    T\big(f(t,T)-\gamma E\big) \\
    \sigma+ E\big( ph (T_{\tau_1})-m h(T_{\tau_2})-\eta\big)
\end{pmatrix} \quad \text{with} \quad h(s)=\frac{ s}{g+a s}.
\end{equation}
We prove this for our system \eqref{delayed system} subject to the initial condition.
\[
\begin{split}
T(s)&=\Psi_1(s), \quad E(s)=\Psi_2(s),\\
\Psi_1(s),\Psi_2(s)&\geq 0, \,s \in [-\tau,0], \quad \Psi_1(0),\Psi_2(0)\geq 0, 
\end{split}
\]
where $\Psi(s):=(\Psi_{1}(s),\Psi_{1}(s))\in C([-\tau,0],\mathbb{R}^{2}_{+})$ the Banach space of continuous functions mapping $[-\tau,0]$ to $\mathbb{R}^{2}_{+}$ with  $\tau=\max\left\{\tau_1,\tau_2\right\}$,  $\|\Psi\|=\sup_{-\tau \leq s\leq 0}|\Psi_i(s)|
$ and $\mathbb{R}^{2}_{+}=\left\{(x_{1},x_{2})|\,x_i\geq 0)\right\}$. In addition, \eqref{vector field} is a Lipschitz continuous function, then, locally, there exists a unique solution $X=X(t,\Psi)$ for any continuous initial function $\Psi.$

\subsection{Two fundamental properties} Let us start with an analogous result of Lemma 1 and Lemma 5 in \cite{Galach2003} about the positivity and maximal interval of existence of solutions with non-negative initial conditions and the non-existence of non-negative periodic orbits in \eqref{main eq 3} for $\tau_1, \tau_2\geq 0.$
\begin{theorem} Any solution $X(t,\Phi)$ of \eqref{main eq 3} globally defined for all $\Phi\in C([-\tau,0],\mathbb{R}^{2}_{+}) $ and is positive, i.e., $T(t)\geq 0$ and $E(t)\geq 0$ for all $t\in  \mathbb{R}.$
\end{theorem}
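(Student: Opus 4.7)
The plan is to establish positivity and a priori bounds on any interval of existence, then conclude global existence by the standard continuation principle for delay equations. Local existence and uniqueness of a solution on some maximal forward interval $[0, t^*)$ is already guaranteed by the Lipschitz property of $F$ noted just above the theorem, so it suffices to show that on $[0, t^*)$ the solution stays non-negative and bounded.

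For positivity, I would read each scalar equation, on the interval where the solution exists, as a linear ODE with continuous coefficient given by the (already assumed) solution. The first equation has the form $\dot T(t) = \phi(t)\, T(t)$ with $\phi(t) := f(t, T(t)) - \gamma E(t)$, so
\[
T(t) = \Phi_1(0)\,\exp\!\left(\int_0^t \phi(s)\, ds\right) \geq 0.
\]
The second equation reads $\dot E(t) = \psi(t)\, E(t) + \sigma$ with $\psi(t) := p\, h(T(t-\tau_1)) - m\, h(T(t-\tau_2)) - \eta$, and variation of constants gives
\[
E(t) = \Phi_2(0)\,\exp\!\left(\int_0^t \psi(s)\, ds\right) + \sigma \int_0^t \exp\!\left(\int_s^t \psi(u)\, du\right) ds \geq 0,
\]
because $\sigma > 0$ and $\Phi_2(0) \geq 0$. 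Equivalently, $\dot E|_{E=0}=\sigma>0$ forces forward invariance of $\{E\geq 0\}$.

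For global existence, I would derive a priori bounds on each component on any finite subinterval of $[0, t^*)$. Since $h(s) = s/(g+as) \leq 1/a$ for $s \geq 0$, positivity of $T$ implies $\psi(t) \leq p/a - \eta$, and hence
\[
\dot E(t) \leq \sigma + \max\{0,\, p/a - \eta\}\, E(t),
\]
which by Gronwall yields an exponential bound on $E$ on any bounded time interval, precluding blow-up. For $T$, the inequality $\dot T \leq r T\bigl[(1 - T^\beta)/\beta - b(t)\bigr]$ and the fact that the bracket is negative whenever $T^\beta > 1 + \beta \|b\|_\infty$ give the pointwise estimate
\[
T(t) \leq \max\!\left\{\sup_{s \in [-\tau,0]} \Phi_1(s),\ (1 + \beta\|b\|_\infty)^{1/\beta}\right\}.
\]
These bounds contradict the blow-up alternative and force $t^* = +\infty$.

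The one point that deserves care is that $\psi$ depends on the past values $T(t-\tau_1), T(t-\tau_2)$, so the positivity/boundedness estimates must be organized by the method of steps: on $[0,\tau]$ the delayed arguments are supplied by the non-negative initial datum $\Phi_1$, making $\psi$ controlled; once $T, E$ are non-negative and bounded on $[0,\tau]$, the same reasoning propagates to $[\tau, 2\tau]$, and so on inductively. This bookkeeping is the main structural obstacle but presents no genuine difficulty, and the bounds obtained are uniform on any compact subinterval of $[0,t^*)$, which is what the continuation argument requires.
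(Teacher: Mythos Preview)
Your argument is correct and follows the same overall strategy as the paper: integrating-factor representations for positivity, differential inequalities for a priori bounds, and the blow-up alternative for global existence. The one structural difference is the order of the boundedness estimates: you bound $E$ first via $h(s)\le 1/a$ and then $T$, whereas the paper bounds $T$ first (using $E\ge 0$ to drop the $-\gamma ET$ term) and then feeds the resulting $T_M$ into $h(T_M)$ to control $E$. Your order is cleaner but hinges on $a>0$; since the paper subsequently treats $a=0$ as a principal case, the paper's order is slightly more robust. Your explicit variation-of-constants formula for $E$ and the method-of-steps bookkeeping are both improvements in rigor over the paper's presentation.
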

\begin{proof}
From the existence and uniqueness theorem, for each $X_0=(\Psi_{1}(0),\Psi_{2}(0))$, with $\Psi_{1}(0),\Psi_{2}(0) \geq 0$, there exists a unique solution $X(t)$ of \eqref{main eq 3} satisfying $X(0)=X_{0}$, defined in some interval $(0,\omega_{+})$ with $\omega_{+} \leq \infty$. Let us define the function
\[
\mathcal{K}(t):=\int_{0}^{t}(f(s,T(s))-\gamma E(s))\,ds.
\]
A direct computation of the first equation of \eqref{main eq 3} shows that
\[
\frac{d}{dt}(\exp[-\mathcal{K}(t)]T(t))=\exp[-\mathcal{K}(t)]\big(-\mathcal{K}(t)T(t)+\mathcal{K}(t)T(t)\big)=0 \quad \Rightarrow\quad T(t)=\Psi(0)\exp\Big[\mathcal{K}(t)\Big].
\]
That is, 
\[
T(t)=\Psi(0)\exp\big[\int_{0}^{t}(f(s,T(s))-\gamma E(s))\,ds,
\]
showing that $T(t)\geq 0$ for all $t\in (0, \omega_{+})$. Similarly, from the second equation of \eqref{main eq 3} we have
\[
\dot{E}(t)\geq E\big( ph (T_{\tau_1})-m h(T_{\tau_2})-\eta\big),
\]
therefore, 
\[
\dot{E}(t)\geq \Psi_2(0)\exp\Big[\int_{0}^t \big( ph (T_{\tau_1})-m h(T_{\tau_2})-\eta\big)\, ds\Big].
\]
and then $E(t)\geq 0$ for all $t\in (0,w_{+})$.
Since $T(t)$ and $E(t)$ are positive and well-defined for all $t\in (0,w_{+})$ for some $w_{+}\leq \infty$, we have 
\[
T^{\prime}(t)\leq T(t)f(t,T(t)) \quad \Rightarrow \quad T(t) \leq T_{M}:=\max \left\{T(0),[(1-\beta b_{\min})/r]^{1/\beta}\right\}.
\]
where $b_{\min}:=\min b(t)$. Coming back to the equation of $E(t)$, we have
\[
E^{\prime}(t)\leq \sigma + \kappa E(t), \quad \text{with} \quad \kappa= \begin{cases}
-\eta \quad \text{if} \quad m\geq p, \\
(p-m) h(T_{M})-\eta  \quad \text{if} \quad m < p, \\
\end{cases}
\]
therefore
\[
E(t)\leq \exp[\kappa t]\Big(E(0)+\sigma \int 
_{0}^{t} \exp[-\kappa s] \, ds\Big). 
\]
The previous computations imply that $\omega_{+}=\infty$, otherwise if $\omega_{+}<\infty$, from the classical results of the continuation of solutions we have $\displaystyle{\lim_{t \to \omega^{-}_{+}} |X(t)|=\infty}$ which is a contradiction.
\end{proof}
\begin{remark}
Alternatively, the global definition of any solution can be established by the following argument:
\[
\dot{E}(t)\leq \sigma+pE(t), 
\]
implying that $E(t)$ globally defined. Then, for $T(t)$ we have
\[
\dot{T}(t)\leq T(t)(\kappa-\gamma E(t)), \quad \text{with} \quad k:=\max_{\mathbb{R}\times \mathbb{R}^{+}}{f(t,T)},
\]
proving that $T(t)$ is globally defined.
\end{remark}
\begin{lemma}
Assume $\tau=(0,0).$ Then, there are no non-negative and non-trivial periodic solutions of the system \eqref{new main EQ aut}.
\end{lemma}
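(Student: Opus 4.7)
The plan is to apply the Bendixson--Dulac criterion in the open first quadrant, and then rule out by invariance arguments any periodic orbit that might touch the coordinate axes. When $\tau=(0,0)$, the autonomous system reduces to the planar field
\[
F_1(T,E) = T\bigl[f(T)-\gamma E\bigr], \qquad
F_2(T,E) = \sigma + E\bigl[(p-m)h(T) - \eta\bigr],
\]
where now $f$ depends only on $T$ (i.e.\ $b(t)\equiv b_0$) and $h(T)=T/(g+aT)$. Since $T$ and $E$ are meant to be non-negative, the natural choice of Dulac function on the open quadrant $T,E>0$ is
\[
B(T,E) := \frac{1}{T\,E}.
\]

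The first step is to compute $\nabla\cdot(BF)$ and check its sign. A direct calculation gives
\[
\partial_T(B F_1) = \partial_T\!\left(\frac{f(T)-\gamma E}{E}\right) = \frac{f'(T)}{E} = -\frac{r\,T^{\beta-1}}{E},
\]
\[
\partial_E(B F_2) = \partial_E\!\left(\frac{\sigma}{TE} + \frac{(p-m)h(T)-\eta}{T}\right) = -\frac{\sigma}{T E^{2}}.
\]
Both terms are strictly negative on the open quadrant $\{T>0,\ E>0\}$, so $\nabla\cdot(BF)<0$ throughout. Since the quadrant is simply connected and $BF$ is $C^1$ there, the Bendixson--Dulac criterion immediately rules out any closed orbit lying entirely in the interior.

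Next, I would handle the boundary. The axis $\{T=0\}$ is invariant because $F_1\equiv 0$ there, and the induced one-dimensional dynamics $\dot E = \sigma - \eta E$ is scalar linear, so it admits no non-trivial periodic orbit (only the equilibrium $E=\sigma/\eta$). The axis $\{E=0\}$ is not invariant, since $F_2(T,0)=\sigma>0$, hence no periodic orbit can lie on it. Because any periodic orbit contained in the closed first quadrant which meets an invariant coordinate axis must lie entirely on that axis (by uniqueness of solutions to the planar ODE), these observations exhaust the possibilities.

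Combining the two arguments, there is no non-trivial periodic solution of the autonomous undelayed system with $T,E\ge 0$. The only delicate step is really the choice of the Dulac function: once $B=1/(TE)$ is selected, the two contributions to the divergence separate cleanly, and the negativity of $f'(T)=-rT^{\beta-1}$ (which uses $\beta>0$) together with the $\sigma/(TE^2)$ term from the constant inflow of effector cells gives a strictly signed divergence without any parametric restriction. I do not anticipate a serious obstacle beyond verifying these sign computations and the boundary invariance statements.
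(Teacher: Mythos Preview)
Your argument is essentially the paper's own proof: the same Dulac function $B=1/(TE)$ on the open first quadrant, leading to a strictly negative divergence and an application of Bendixson--Dulac (the paper omits your boundary discussion, which is a welcome addition). One minor slip: since $f(T)=r(b-T^\beta)$ you should have $f'(T)=-r\beta T^{\beta-1}$, so the first divergence term is $-r\beta T^{\beta-1}/E$; this of course does not affect the sign.
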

\begin{proof}
For $X=(T,E)^{tr}$, consider the auxiliary scalar function $\displaystyle{L:\mathbb{R}_+^2 \to \mathbb{R}_+}$, $L(X)=\frac{1}{T E}$. Define the vector field $\tilde{F}:\mathbb{R}_+^2 \to \mathbb{R}_+^2$, $X \to \tilde{F}(X)$ given by
\[
\tilde{F}(X)=L(X)F(X)=\begin{pmatrix}
 \frac{f(T)}{E}-\gamma\\
 \frac{\sigma}{TE}+\frac{p}{g+aT}-\frac{m T+\eta}{T}
\end{pmatrix}
\]
Then, the divergence of $\tilde{F}$ is computed as
\[
\nabla \cdot \tilde{F}(X)=-\frac{1}{E}\Big(r\beta T^{\beta-1}+\frac{\sigma}{T E}\Big)<0,
\]
by Dulac-Bendixson Theorem, it follows that \eqref{new main EQ aut} does not admit closed orbits in $\mathbb{R}_+^2$
\end{proof}

Let us begin with some results on the existence and stability of equilibrium points for the associated autonomous system, i.e., when $b(t)=\hat{b}$ for all $t\in \mathbb{R}$, with $\hat{b}\in \mathbb{R}^{+}.$ In this case, we have the autonomous time-delayed system
\begin{equation}\label{Main EQ aut}
\begin{split}
\dot{T}&=T \big(f(T)-\gamma E\big),\\
\dot{E}&=\sigma+ E\big( p\, h(T_{\tau_1})-m \, h(T_{\tau_2})-\eta\big),
\end{split}
\end{equation}
where the relative tumor growth function (for $0<\beta$) is given by
\begin{equation}\label{funcion-crecimiento}
\displaystyle{f(T):=r(b-T^{\beta})},
\end{equation}
 with $r\to r/\beta$ and $b:=1-\beta \hat{b}.$ In addition, 
 from now on we shall assume that $b>0$, that is:
 \[
  \hat{b}<1/\beta \quad \text{for any $\beta>0$}.
 \]

\subsection{Equilibrium points} The possible equilibrium points $(T,E)$ of \eqref{Main EQ aut} are given by the solutions of the following system
\begin{equation}\label{equilibrium system}
\begin{split}
T\big(f(T)-\gamma E)&=0,\\
\sigma+E\Big(\frac{(p-m)T}{g+aT}-\eta\Big)&=0.
\end{split}
\end{equation}
We are only interested in solutions of \eqref{equilibrium system} lying in the first quadrant of the $TE$-plane. i.e., points $(T,E)$ satisfying $T\geq 0$ and $E\geq 0$. Any equilibrium point that fulfills this condition shall be called an \textit{admissible equilibrium point}. The most simple equilibrium point is obtained when $T=0$. Indeed, in this case, we have that $(0,\sigma/\eta)$ is an equilibrium point (for any function $f(T)$), known as \textit{tumor-free equilibrium point}.

\vspace{0.3 cm}
Otherwise, for $T\neq 0$, any solution of \eqref{equilibrium system} must satisfy
\begin{equation}\label{nontrivial equilibrium systems}
f(T)=\gamma E, \quad \text{and} \quad \sigma=E\big(\eta+\frac{(m-p) T}{g+aT}\big).
\end{equation}
From the left-hand equation in \eqref{nontrivial equilibrium systems} we deduce that $f(T)\geq 0$. Therefore, from the formula (\ref{funcion-crecimiento}), any solution $(T,E)$ of \eqref{nontrivial equilibrium systems} satisfies  $T\in [0,b^{1/\beta}]$. Easy computations show that
\[
\begin{split}
\sigma \gamma&=f(T)\big(\eta+\frac{(m-p)T}{g+aT}\big),\\
&=\eta f(T)+\frac{(m-p)Tf(T)}{g+aT},
\end{split}
\]
from which it follows that 
the solutions of the equation
$$
\eta f(T)=\sigma\gamma -\frac{(m-p)Tf(T)}{g+aT},$$
namely \begin{equation}\label{equation for equilibria}
\eta r(b-T^{\beta})=\sigma \gamma-\Big(\frac{m-p}{g+aT}\Big)rT(b-T^{\beta})
\end{equation}
provide the first component of possible non-tumor-free equilibrium points of system \eqref{Main EQ aut}.
To avoid parameter overload, we introduce the scaling given by
\begin{equation}\label{parametros-escalados}
\sigma \to \frac{\sigma \gamma}{\eta r}, \quad   m\to \frac{m}{\eta g}, \quad  p\to \frac{p}{\eta g}, \quad a\to \frac{a}{g} \quad \text{and call} \quad \mu:= m-p,
\end{equation}

After some direct and easy computations, \eqref{equation for equilibria} can be rewritten as follows
\begin{equation}\label{equation for S}
S(T,\mu,a)=b-\sigma, \quad \text{with} \quad S(T,\mu,a)=T[(a+\mu)T^{\beta}+T^{\beta-1}-(b(\mu+a)-a\sigma)]
\end{equation}

In conclusion, any possible admissible equilibrium point $(T,E)$ of \eqref{Main EQ aut} that is not the tumor-free equilibrium point, satisfies 
\[
0< T\leq b^{1/\beta}, \quad E=f(T)/\gamma, \quad \text{and} \quad S(T,\mu,a)=b-\sigma.
\]
\begin{remark}
The next results consider the case $a=0.$ Following the section of numerical simulations in \cite{KHAJANCHI2014652} the parameter $g$ is of the order $10^6$ (pretty large) with respect to the other parameter in the model. Then, after the scaling process, the value $a\to a/g=\textit{O}(10^{-6})$, is quite small.
\end{remark}
\vspace{0.2 cm}
For $a=0$, the function $S(T,\mu,0)$ is given by
\[
S(T,\mu,0)=\mu(T^\beta-b)T+T^{\beta}.
\]

Appendix A provides mathematical properties of $S(T,\mu,0)$ useful for obtaining the following results on the equation \eqref{equation for S} in this case.


\begin{prop}\label{Prop 1}
Let $\beta, b \in\, (0,1)$ and let us assume $a=0$, in addition to the following conditions
\begin{equation}\label{global condition for equilibria}
b\leq \sigma, \quad \text{and} \quad \mu\leq \mu_{c}:=\Big(\frac{1}{b}\Big(\frac{1-\beta}{1+\beta}\Big)^{\beta-1}\Big)^{\frac{1}{\beta}}.
\end{equation}
Then, equation \eqref{equation for S} has no solutions lying in $[0,b^{1/\beta}]$ i.e., the tumor-free equilibrium point $(0,\sigma/\eta)$ is the only admissible equilibrium point of \eqref{Main EQ aut}. 
\end{prop}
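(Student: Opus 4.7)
The plan is to prove that under the stated hypotheses $S(T,\mu,0)\geq 0$ throughout $[0,b^{1/\beta}]$; combined with $b-\sigma\leq 0$ this leaves the equation $S(T,\mu,0)=b-\sigma$ with no admissible positive solution (except possibly in a borderline degenerate case that can be isolated by hand).

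First I would evaluate $S$ at the boundary values of the relevant interval: a direct substitution gives $S(0,\mu,0)=0$ and $S(b^{1/\beta},\mu,0)=b>0$, both of which already satisfy $S\geq 0\geq b-\sigma$. Next I would dispose of the case $\mu\leq 0$ trivially: writing
\[
S(T,\mu,0)=T^{\beta}+\mu(T^{\beta}-b)\,T,
\]
and using $T^{\beta}-b<0$ on $(0,b^{1/\beta})$, the second summand is non-negative whenever $\mu\leq 0$, so $S(T,\mu,0)\geq T^{\beta}>0$ on the open interval.

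The heart of the argument is the case $0<\mu\leq\mu_c$ on $T\in(0,b^{1/\beta})$. Since $b-T^{\beta}>0$ there, the inequality $S(T,\mu,0)\geq 0$ is equivalent to
\[
\mu\,\leq\,\phi(T):=\frac{T^{\beta-1}}{b-T^{\beta}}.
\]
I would then analyse $\phi$ on $(0,b^{1/\beta})$: because $\beta<1$, $\phi(T)\to+\infty$ as $T\to 0^{+}$, and clearly $\phi(T)\to+\infty$ as $T\to(b^{1/\beta})^{-}$. A routine differentiation shows that $\phi$ admits the unique interior critical point $T^{\star}=((1-\beta)b)^{1/\beta}$, which by the shape/monotonicity properties of $S(\cdot,\mu,0)$ developed in Appendix A is a global minimiser of $\phi$, with critical value equal to $\mu_c$ in the closed form stated. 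Hence $\mu\leq\mu_c=\min_{(0,b^{1/\beta})}\phi$ forces $\mu\leq\phi(T)$ for every such $T$, so $S(T,\mu,0)\geq 0$ on the whole interval.

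Putting everything together one obtains $S(T,\mu,0)\geq 0\geq b-\sigma$ on $[0,b^{1/\beta}]$, with simultaneous equality only in the degenerate borderline $b=\sigma$, $\mu=\mu_c$, $T=T^{\star}$, which is handled separately or ruled out by taking the inequalities strict. This prevents any admissible $T>0$ from solving \eqref{equation for S}, and hence the tumor-free point $(0,\sigma/\eta)$ is the unique admissible equilibrium of \eqref{Main EQ aut}. The delicate step I expect to require the most care is the algebraic simplification identifying the critical value $\phi(T^{\star})$ with the precise closed-form expression for $\mu_c$ in the statement; the monotonicity and sign analysis of $\phi$ (equivalently, of $S$) is routine calculus and is presumably what Appendix A packages for reuse.
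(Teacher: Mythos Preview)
Your overall strategy is correct and is essentially the same as the paper's: show that $S(T,\mu,0)\ge 0$ on $[0,b^{1/\beta}]$, whence the equation $S(T,\mu,0)=b-\sigma\le 0$ cannot be solved with $T>0$. The difference lies in \emph{how} the non-negativity for $0<\mu\le\mu_c$ is obtained. The paper (via Lemma~\ref{lema function h for mu positive 1}) shows that for $\mu\le\mu_c$ the function $h_\mu=S(\cdot,\mu,0)$ is \emph{monotone increasing}: it locates the minimum of $h'_\mu$ at $T_\star=\frac{1-\beta}{\mu(1+\beta)}$ and verifies $h'_\mu(T_\star)\ge 0$ precisely when $\mu\le\mu_c$. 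Monotonicity plus $h_\mu(0)=0$ gives positivity. You instead attack non-negativity directly through the equivalence $S(T,\mu,0)\ge 0\Leftrightarrow \mu\le\phi(T)$.

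There is, however, an algebraic slip in your ``delicate step'': the minimum value of $\phi$ is \emph{not} $\mu_c$. Your critical point $T^\star=((1-\beta)b)^{1/\beta}$ is the paper's $T_{\mathrm{bif}}$, and a direct computation gives
\[
\phi(T^\star)=\frac{((1-\beta)b)^{(\beta-1)/\beta}}{\beta b}=\frac{1}{\beta\big[b(1-\beta)^{1-\beta}\big]^{1/\beta}}=\mu_{\mathrm{bif}},
\]
which the paper shows (and one checks easily) satisfies $\mu_{\mathrm{bif}}>\mu_c$ for all $\beta\in(0,1)$. So your identification $\min\phi=\mu_c$ fails. The good news is that this does not break your argument: since $\mu\le\mu_c<\mu_{\mathrm{bif}}=\min_{(0,b^{1/\beta})}\phi$, you actually get the \emph{strict} inequality $S(T,\mu,0)>0$ on $(0,b^{1/\beta}]$, and the borderline case you worried about does not occur at all. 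In fact your route proves a slightly stronger positivity result (valid for all $\mu\le\mu_{\mathrm{bif}}$); the threshold $\mu_c$ in the hypothesis is what is needed for the paper's monotonicity argument, not for mere non-negativity.
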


\begin{proof}
As we know, if $(T,E)$ is any admissible equilibrium point of \eqref{Main EQ aut} besides the tumor-free equilibrium point, then it satisfies 
\[
S(T,\mu,a)=b-\sigma, \quad \text{with} \quad 0< T\leq b^{1/\beta}.
\]
If $a=0$, we have $\displaystyle{S(T,\mu,0)=\mu (T^{\beta}-b)T+T^{\beta}}$. The result follows directly from Lemma \ref{lema function h for mu negativo} and Lemma \ref{lema function h for mu positive 1} in Appendix A. 
\end{proof}

\begin{prop}\label{Prop unique nontrivial equilibrium}
Let $\beta, b \in\, (0,1)$ and assume that $a=0$. 
Suppose that one of the following conditions holds
\[
\dagger) \quad \sigma <b \quad \text{and} \quad  \mu\leq 0, \quad \text{or} \quad \ddagger) \quad \sigma\leq b \quad \text{and} \quad 0<\mu\leq \mu_{c}.
\]
Then, equation \eqref{equation for S} has exactly one solution $T_{\ast}$ on $]0,b^{1/\beta}]$ (which is simple), given implicitly as the solution of
\begin{equation}\label{ecuacion para T unico}
\mu(T^{\beta}_{\ast}-b)T_{\ast}+T^{\beta}_{\ast}=b-\sigma.
\end{equation}
Consequently, the only admissible equilibrium points of \eqref{Main EQ aut} are $(0,\sigma/\eta)$ and $\displaystyle{(T_{\ast}, E_{\ast})}$ with $\displaystyle{E_{\ast}:=r(b-T^{\beta}_{\ast})/\gamma.}$ 
\end{prop}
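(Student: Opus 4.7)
The plan is to study the function $S(T) := S(T,\mu,0) = \mu(T^\beta - b)T + T^\beta$ on the interval $(0, b^{1/\beta}]$ and prove that the level equation $S(T) = b-\sigma$ has a unique simple root $T_\ast$. Once this is done, the formula $E_\ast = f(T_\ast)/\gamma = r(b - T_\ast^\beta)/\gamma$ automatically supplies an admissible partner (nonnegative, since $T_\ast \leq b^{1/\beta}$), and the pair $(T_\ast, E_\ast)$ exhausts the non-tumor-free equilibria because, as shown in Section~\ref{section Basic results}, every such equilibrium must satisfy $S(T)=b-\sigma$ on this interval.

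For existence I would evaluate the endpoints: $S(0^+) = 0$ and $S(b^{1/\beta}) = b$. Under ($\dagger$) we have $b-\sigma \in (0, b)$, so the Intermediate Value Theorem immediately yields at least one $T_\ast \in (0, b^{1/\beta})$. Under ($\ddagger$) one gets $b-\sigma \in [0, b)$; the case $\sigma = b$ (for which $b-\sigma = 0$) must be handled separately by factoring $S(T) = T\bigl(T^{\beta-1} + \mu T^\beta - \mu b\bigr)$ and locating the unique zero of the second factor on $(0, b^{1/\beta}]$, which exists in the regime $0 < \mu \leq \mu_c$.

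For uniqueness and simplicity I would use the derivative
\[
S'(T) = \beta T^{\beta-1} + \mu\bigl((\beta+1)T^\beta - b\bigr),
\]
and show $S'(T) > 0$ on $(0, b^{1/\beta}]$ in both cases. When $\mu \leq 0$ (case $\dagger$), this is the content of Lemma \ref{lema function h for mu negativo} of Appendix A: the term $\beta T^{\beta-1}$ dominates because the contribution of the $\mu$-term can only raise $S'$ where $(\beta+1)T^\beta < b$ and the possible decrease where $(\beta+1)T^\beta > b$ is strictly controlled. When $0 < \mu \leq \mu_c$ (case $\ddagger$), the competing sign is the reverse, and one appeals to Lemma \ref{lema function h for mu positive 1}: the value $\mu_c = \big(\tfrac{1}{b}\big(\tfrac{1-\beta}{1+\beta}\big)^{\beta-1}\big)^{1/\beta}$ is precisely the threshold at which the minimum of $S'$ over $(0, b^{1/\beta}]$ first vanishes, so that $S' \geq 0$ below the threshold. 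In either case $S$ is strictly increasing, which forces the root $T_\ast$ to be unique and simple (i.e., $S'(T_\ast) \neq 0$).

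The main obstacle is case ($\ddagger$): with $\mu > 0$ the perturbation $\mu(T^\beta - b)T$ is negative on $(0, b^{1/\beta})$ and can in principle create an interior dip producing a second crossing of the level $b - \sigma$. The delicate calibration of $\mu_c$—obtained by extremizing the auxiliary function $(\beta+1)T^\beta - b$ weighted against $\beta T^{\beta-1}$—is what rules this out, and isolating that computation into Appendix~A keeps the present proof at the level of invoking the two monotonicity lemmas and reading off the unique intersection with the horizontal line $y = b-\sigma$.
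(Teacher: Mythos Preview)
Your overall strategy---reduce to the Appendix~A lemmas---is exactly what the paper does: it sets $h_0=b-\sigma\in[0,b]$ and cites Lemma~\ref{lema function h for mu negativo} case~2a and Lemma~\ref{lema function h for mu positive 1} case~b. Your treatment of $0<\mu\le\mu_c$ via strict monotonicity of $S$ is correct and is precisely the content of the second lemma.

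There is, however, a real gap in your treatment of case~$(\dagger)$. You propose to prove $S'(T)>0$ on all of $(0,b^{1/\beta}]$ when $\mu\le0$ and attribute this to Lemma~\ref{lema function h for mu negativo}. That is not what the lemma says: for $\mu<0$ it establishes \emph{concavity} ($h_\mu''<0$), and in fact monotonicity fails once $\mu\le-b^{-1/\beta}$, since
\[
S'(b^{1/\beta})=\beta b\bigl(b^{-1/\beta}+\mu\bigr)<0,
\]
so the critical point $T_\Delta$ lands inside $(0,b^{1/\beta}]$. Uniqueness for $0<h_0<b$ then comes from a different mechanism: $S$ rises from $0$ to its maximum $S(T_\Delta)\ge b$ and then decreases to $S(b^{1/\beta})=b$, so $S\ge b$ on $[T_\Delta,b^{1/\beta}]$ and the level $b-\sigma<b$ is crossed exactly once, on the ascending branch (which also gives simplicity, since $T_\ast<T_\Delta$). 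Your phrase ``the possible decrease \ldots\ is strictly controlled'' does not capture this, and a direct attempt to verify $S'>0$ would fail. A smaller point: your factoring argument for the boundary $\sigma=b$ under~$(\ddagger)$ does not locate a root in $(0,b^{1/\beta}]$---since $S$ is nondecreasing with $S(0)=0$ there, the factor $T^{\beta-1}+\mu(T^\beta-b)$ stays strictly positive on $(0,b^{1/\beta}]$ for $0<\mu\le\mu_c$.
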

\begin{proof}
Let $h_{0}:=b-\sigma$, then $0\leq h_{0}\leq b.$ The proof follows as a direct consequence of Lemma \ref{lema function h for mu negativo} case 2a. and Lemma \ref{lema function h for mu positive 1} case b. in Appendix A. 
\end{proof}
\begin{remark}
In the case  $\beta=1$, we have
\[
\begin{split}
T_{\ast}=b-\sigma, \quad \text{and} \quad E_{\ast}=\frac{r \sigma}{\gamma} \quad \text{if} \quad \mu=0, 
\end{split}
\]
\[
T_{\ast}=\frac{\mu b-1+\sqrt{(1+\mu b)^2-4\mu \sigma}}{2\mu}, \quad \text{and} \quad E_{\ast}=\frac{r}{2\gamma}\Big(b+\frac{1-\sqrt{(1+\mu b)^2-4\mu \sigma}}{\mu}\Big) \quad \text{if} \quad \mu<0.
\] 
In particular,
\[
T_{\ast}=\big(1-\sqrt{\frac{\sigma}{b}}\big)b, \quad \text{and} \quad E_{\ast}=\frac{r\sqrt{\sigma b}}{\gamma}, \quad \text{if} \quad \mu=-b^{-1}.
\]
 \end{remark}
\vspace{0.2 cm}
\noindent
On the other hand, for each fixed $\mu>\mu_{c}$, let $T_{L,R}(\mu)$ be the two solutions of
\[
T^{\beta-1}(\beta+\mu(1+\beta)T)=\mu b, \quad \text{with} \quad 0< T_{L}(\mu)< \frac{\mu(1-\beta)}{1+\beta} < T_{R}(\mu)<b^{1/\beta},
\]
and define the following quantities
\[
H_{L}:=\mu(T^\beta_{L}(\mu)-b)T_{L}(\mu)+T_{L}^{\beta}(\mu), \quad \text{and} \quad H_{R}:=\mu(T^\beta_{R}(\mu)-b)T_{R}(\mu)+T_{R}^{\beta}(\mu),
\]
In Appendix A it is shown that $H_{R}<H_{L}$ and, in particular, if $\displaystyle{\mu\geq \mu_{bif}:=\frac{1}{\beta \big[b(1-\beta)^{1-\beta}\big]^{1/\beta}} }$, then $\displaystyle{H_{R}\leq 0}$ with
\[
H_{R}=0 \quad \Leftrightarrow \quad T_{R}(\mu_{bif})=[(1-\beta)b]^{1/\beta}.
\]
\begin{remark}
Direct computation shows that for all $\displaystyle{]0,1]}$ we have
\[
\begin{split}
\mu_{c}<\mu_{bif}, \quad \text{and} \quad \mu_{c} \to \frac{1}{b}, \quad \mu_{bif}\to \frac{1}{b} \quad \text{as} \quad \beta \to 1 
\end{split}
\]
\[
T_{\ast}=\frac{\mu b-1+\sqrt{(1+\mu b)^2-4\mu \sigma}}{2\mu}, \quad \text{and} \quad E_{\ast}=\frac{r}{2\gamma}\Big(b+\frac{1-\sqrt{(1+\mu b)^2-4\mu \sigma}}{\mu}\Big) \quad \text{if} \quad \mu<0.
\] 
In particular,
\[
T_{\ast}=\big(1-\sqrt{\frac{\sigma}{b}}\big)b, \quad \text{and} \quad E_{\ast}=\frac{r\sqrt{\sigma b}}{\gamma}, \quad \text{if} \quad \mu=-b^{-1}.
\]
 \end{remark}

\begin{prop}\label{Prop 3} Let $\beta, b \in\, ]0,1[$, $\displaystyle{h_{0}:=b-\sigma}$, and let us assume $a=0$. 

\begin{itemize}
    \item[1.] For $\displaystyle{\mu_{c}<\mu \leq \mu_{bif}}$, it holds: 
\begin{itemize}
    \item[I)] If $ h_{0}=H_R$, then there are exactly two solutions of \eqref{equation for S}  on $[0,b^{1/\beta}]$.
    \item[II)] If $H_{R}<h_{0}< H_{L}$, then there are exactly three solutions of \eqref{equation for S} on $[0,b^{1/\beta}]$.
    \item[III)]  If $H_{L}=h_{0}$, then there are exactly two solutions of \eqref{equation for S} on $ [0,b^{1/\beta}]$.
    \item[IV)]  If $0\leq h_{0}< H_R$ or $H_{L}< h_0$, then there is exactly one solution of \eqref{equation for S} on $[0,b^{1/\beta}]$.
\end{itemize}

\hspace{0.3 cm}
\item[2.] For $\displaystyle{\mu>\mu_{bif}}$, it holds:
\begin{itemize}
    \item[I)] If $ h_{0}=H_R$, then there are exactly one solution of \eqref{equation for S}  on $[0,b^{1/\beta}]$.
    \item[II)] If $H_{R}<h_{0}\leq  0$, then there are exactly two solutions of \eqref{equation for S} on $[0,b^{1/\beta}]$.
    \item[III)]  If $0<h_{0}<H_L$, then there are exactly three solutions of \eqref{equation for S} in $ [0,b^{1/\beta}]$.
    \item[IV)] If $ h_{0}=H_L$, then there are exactly two solutions of \eqref{equation for S} in $[0,b^{1/\beta}]$.
    \item[V)]  If $h_{0}< H_R$ there are no solutions pf \eqref{equation for S}, while if $H_{L}< h_0$, there is exactly one solution of \eqref{equation for S} on $[0,b^{1/\beta}]$.
\end{itemize}

\end{itemize}
\end{prop}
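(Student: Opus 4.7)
The plan is to reduce the enumeration of solutions of $S(T,\mu,0) = h_0$ on $[0, b^{1/\beta}]$ to a direct level-set count, once the shape of $T \mapsto S(T,\mu,0)$ is available from Appendix A. Specifically, I would first extract from the appendix the following qualitative picture: $S(0,\mu,0) = 0$, $S(b^{1/\beta},\mu,0) = b$, and for every $\mu > \mu_c$ the function $S(\cdot,\mu,0)$ admits exactly two interior critical points $T_L(\mu) < T_R(\mu)$, being strictly increasing on $(0, T_L)$, strictly decreasing on $(T_L, T_R)$, and strictly increasing on $(T_R, b^{1/\beta})$. This delivers $H_L = S(T_L,\mu,0)$ as a local maximum and $H_R = S(T_R,\mu,0)$ as a local minimum, with $H_L > 0$ (since $S$ is positive immediately to the right of $0$) and $H_R < H_L$.

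Once this is in place, for any target $h \in \mathbb{R}$ the equation $S(T,\mu,0) = h$ has, on each of the three monotone subintervals, at most one solution, and exactly one precisely when $h$ lies in the closed image of $S$ on that subinterval, namely $[0, H_L]$ on $[0, T_L]$, $[H_R, H_L]$ on $[T_L, T_R]$, and $[H_R, b]$ on $[T_R, b^{1/\beta}]$. Solutions coinciding with $T_L$ (when $h = H_L$) or with $T_R$ (when $h = H_R$) are shared between two adjacent intervals and are counted once.

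Next, I would branch on the sign of $H_R$, which is exactly the dichotomy separating Part 1 from Part 2: Appendix A gives $H_R \ge 0$ iff $\mu \le \mu_{bif}$. With the ingredients above, each of the sub-cases I)--IV) in Part 1 and I)--V) in Part 2 follows by elementary arithmetic: insert $h = h_0 = b - \sigma$, and compare it to the ordered list $\{H_R, 0, H_L\}$, noting that $h_0 \le b$ is automatic from $\sigma \ge 0$. For instance, Case II of Part 1 ($H_R < h_0 < H_L$) yields one solution in each monotone piece, totaling three; Case III of Part 2 ($0 < h_0 < H_L$) is identical in spirit once one observes that $h_0 > 0 > H_R$ puts $h_0$ inside all three images.

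The main points requiring care are the tangency bookkeeping at $h_0 \in \{H_L, H_R\}$, where a single preimage at $T_L$ or $T_R$ is shared by two adjacent pieces and must not be double counted, and the subcase $h_0 < H_R$ in Part 2, where I would explicitly verify that \emph{no} monotone piece attains $h_0$: indeed $h_0 < H_R \le 0$ forces $h_0 \notin [0,H_L]$ on the first piece and $h_0 \notin [H_R, H_L]$, $[H_R, b]$ on the other two. Dually, in the subcases $h_0 > H_L$, only the last piece can contain $h_0$, and it does so because $h_0 \le b$; this produces the unique solution reported. Modulo these routine checks the proof reduces to the case analysis sketched above, and no new ideas beyond the structural lemmas of Appendix A are required.
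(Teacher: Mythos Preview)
Your approach is correct and is essentially the same as the paper's: the paper does not give an explicit proof of this proposition, relying instead on the structural analysis of $h_\mu(T)=S(T,\mu,0)$ carried out in Appendix~A (two critical points $T_L<T_R$ with $h_\mu''(T_L)<0<h_\mu''(T_R)$, the inequalities $H_R<H_L<b$, and the characterization $H_R\ge 0\iff \mu\le\mu_{bif}$), from which the level-set count is immediate. Your write-up makes this counting explicit and handles the tangency bookkeeping at $h_0\in\{H_L,H_R\}$ exactly as intended.
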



\section{Main results}\label{MainResults}
This section presents some results about the dynamics and stability of the feasible equilibrium points for the autonomous delayed system \eqref{Main EQ aut}, given by
\begin{equation}\label{new main EQ aut}
\dot{X}=\mathcal{F}(X,X_{\tau_1},X_{\tau_2}) \quad \text{with} \quad 
\mathcal{F}(X_{\tau_0},X_{\tau_1},X_{\tau_2})= \begin{pmatrix}
T\big(f(T)-\gamma E\big)\\
\sigma+ E\big( ph (T_{\tau_1})-m h(T_{\tau_2})-\eta\big)
\end{pmatrix},
\end{equation}
where $X_{\tau_i}=(T_{\tau_i},E_{\tau_i})^{tr}$. For convenience, here we denote $X_{\tau_0}=X=(T,E)$, that is, $\tau_0=0$. Notice that at each equilibrium point $\hat{X}=(\hat{T},\hat{E})$, the associated linear system is given by
\begin{equation}\label{linearized system with delay}
\dot{Y}=A_0Y+A_1Y_{\tau_1}+A_2Y_{\tau_2},
\end{equation}
with $A_{i}=D_{X_{\tau_i}}\mathcal{F}(\hat{X})$. Moreover,
\[
A_{0}=\begin{pmatrix}
f(\hat{T})-\gamma \hat{E}+\hat{T}f^{\prime}(\hat{T}) & -\gamma\hat{T}\\
0 & (p-m)h(\hat{T})-\eta
\end{pmatrix},
\]
and
\[
A_1=\begin{pmatrix}
0 & 0\\
p\, h^{\prime}(\hat{T})\hat{E}& 0
\end{pmatrix}, \qquad A_2=\begin{pmatrix}
0 & 0\\
-m\, h^{\prime}(\hat{T})\hat{E} & 0
\end{pmatrix}.
\]
From Appendix B, the corresponding characteristic function $\mathcal{P}_{a}(\lambda,\tau_1,\tau_2)$ of \eqref{linearized system with delay} is given by
\begin{equation*}\label{characteristic polynomial}
\mathcal{P}_{a}(\lambda,\tau_1,\tau_2):=P(\lambda)+e^{-\tau_1\lambda} Q(\lambda) + e^{-2\tau_1\lambda}\det A_1+e^{-\tau_2\lambda} H(\lambda,\tau_1)+e^{-2\tau_2\lambda}\det A_2,
\end{equation*}
with
\begin{equation}\label{coeficientes}
\begin{split}
P(\lambda)&=\lambda^{2}-(tr A_0)\lambda+\det A_0\\
Q(\lambda)&=C_{A_0A_1}-(tr A_1)\,\lambda, \hspace{2.0 cm} \text{and} \quad \quad  C_{A_{i}A_{j}}=\det{(A^{1}_{i}|A^2_{j})} + \det{(A_{j}^1|A_{i}^2)}, \\
H(\lambda,\tau_{1})&=e^{-\lambda\,\tau_1}C_{A_1A_2}+C_{A_0A_2}-(tr A_2)\,\lambda,
\end{split}
\end{equation}
where, $A^{1}_{i}$, $A^{2}_{i}$ denoted the first and the second column of $A_i$ respectively. Direct computations show that
\[
\begin{split}
tr A_0&=f(\hat{T})-\gamma \hat{E}+\hat{T}f^{\prime}(\hat{T})+(p-m)h(\hat{T})-\eta, \quad tr A_1=tr A_2=0\\
\det A_0&=\big(f(\hat{T})-\gamma \hat{E}+\hat{T}f^{\prime}(\hat{T})\big)\big((p-m)h(\hat{T})-\eta\big), \quad \det A_1=\det A_2=0.
\end{split}
\]

and
\[
C_{A_0A_{1}}=\gamma p h^{\prime}(\hat{T})\hat{T}\hat{E}, \quad C_{A_0A_{2}}= -\gamma m h^{\prime}(\hat{T})\hat{T}\hat{E},\quad \text{and} \quad C_{A_1A_2}=0.
\]

In consequence, 
\begin{equation}\label{characteristic polynomial new}
\mathcal{P}_{a}(\lambda,\tau_1,\tau_2)=P(\lambda)+\gamma h^{\prime}(\hat{T})\hat{T}\hat{E}\big(p e^{-\tau_1 \lambda}-m e^{-\tau_2 \lambda}\big)
\end{equation}


\subsection{Stability of the tumor-free equilibrium point}
For any  $a \in \mathbb{R}^{+}$,   the characteristic equation \eqref{characteristic polynomial new}  
at the tumor-free equilibrium point $(0,\sigma/\eta)$
is given explicitly by
\begin{equation}\label{tumor-free characteristic equation}
P_{a}(\lambda,\tau_1)=0 \quad \Leftrightarrow \quad (\lambda+\eta)(\lambda-rb+\frac{\gamma \sigma} {\eta})=0.
\end{equation}

\begin{theorem} Define the parameter $\displaystyle{\Delta:=\gamma \sigma-rb\eta}$. Then, for all $\tau_{1},\tau_2 \geq 0$, it follows
\begin{itemize}
    \item[1.] If $\Delta <0$ then the equilibrium $(0,\sigma/\eta)$ is unstable.
    \item[2.] If $\Delta =0$ then $(0,\sigma/\eta)$ is locally stable.
    \item[3.]  If $\Delta >0$ then $(0,\sigma/\eta)$ is locally asymptotically stable.
\end{itemize}
\end{theorem}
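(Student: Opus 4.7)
The plan is to exploit the fact that the tumor-free equilibrium satisfies $\hat T=0$, so that the delay-dependent term in \eqref{characteristic polynomial new} vanishes identically: its coefficient $\gamma h^{\prime}(\hat T)\hat T\hat E$ carries the factor $\hat T=0$. The characteristic function therefore collapses to $P(\lambda)=0$ independently of $\tau_1$ and $\tau_2$. Plugging $\hat T=0$, $\hat E=\sigma/\eta$, $f(0)=rb$, $h(0)=0$, and $\hat T f^{\prime}(\hat T)|_{\hat T=0}=0$ (valid since $\beta>0$) into the formulas for $tr\,A_0$ and $\det A_0$, a direct computation recovers the factorization $P(\lambda)=(\lambda+\eta)(\lambda-rb+\gamma\sigma/\eta)$ announced in \eqref{tumor-free characteristic equation}.

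Next I would read off the two (delay-independent) characteristic roots $\lambda_1=-\eta<0$ and $\lambda_2=rb-\gamma\sigma/\eta=-\Delta/\eta$, so that the spectrum of the linearized delay system at $(0,\sigma/\eta)$ consists of exactly this pair of real numbers for every choice of $\tau_1,\tau_2\geq 0$. Cases 1 and 3 then follow from the standard spectral stability criterion for linear retarded functional differential equations. If $\Delta>0$ both roots belong to $\C_{-}$, so the equilibrium is hyperbolic and linearly asymptotically stable, which promotes to local asymptotic stability of the nonlinear system. If $\Delta<0$ the root $\lambda_2>0$ lies in $\C_{+}$, making the equilibrium hyperbolic with a nontrivial unstable direction and therefore unstable.

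The borderline case $\Delta=0$ is the main technical obstacle. Here $\lambda_2=0$ is a simple characteristic root sitting on the imaginary axis while $\lambda_1=-\eta$ is strictly negative, so the spectrum alone provides only Lyapunov stability of the linearization and not automatically of the nonlinear system. To bridge this gap I would invoke the center manifold theorem for delay equations: the one-dimensional center manifold near $(0,\sigma/\eta)$ is tangent to the eigenspace of $\lambda_2=0$ and can be parametrized as $E=\sigma/\eta+\mathcal{O}(T)$. Substituting this into $\dot T=T(f(T)-\gamma E)$ and using that $f(T)-\gamma\sigma/\eta=-rT^\beta$ when $\Delta=0$ produces a reduced scalar equation with leading term $\dot T\approx -r\,T^{\beta+1}$. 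Since $\beta>0$ this flow pushes $T\to 0^+$, and combined with the positivity property proved earlier, this confines small non-negative initial data to a neighborhood of $(0,\sigma/\eta)$, giving the claimed Lyapunov stability. The subtlety to handle carefully is that for $\beta<1$ the nonlinearity $T^\beta$ is not $C^1$ at $T=0$, which requires either restricting the reduction to the invariant half $T\geq 0$ or performing a preliminary regularization before invoking the standard center manifold machinery.
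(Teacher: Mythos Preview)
Your approach is essentially identical to the paper's: the paper simply records the factored characteristic equation \eqref{tumor-free characteristic equation} and states the theorem as an immediate consequence, without providing a separate proof environment. You actually supply more detail than the paper does---in particular, your center-manifold discussion for the borderline case $\Delta=0$ goes well beyond anything the paper offers, since it gives no argument at all for Lyapunov stability in that degenerate case.
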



\subsection{Stability analysis of non-tumor-free equilibrium points} 
In what follows, the stability properties of the equilibrium point $(T_{\ast}, E_{\ast})$ are discussed. Recall that for $a=0$, $(T_{\ast},E_{\ast})$ is the only solution of  \eqref{equation for S}, with $T_\ast$ solution of equation (\ref{ecuacion para T unico}) and  $\displaystyle{E_{\ast}:=r(b-T^{\beta}_{\ast})/\gamma}$. With this, we observe the following consequence, predicted by Proposition \ref{Prop unique nontrivial equilibrium}.

\begin{remark}
As $(T_{\ast},E_{\ast})$ is a simple solution of equation \eqref{equation for S} at $a=0$, it is natural the analytical extension of this solution to  $a\geq 0$ small.  Therefore, from now on, we will refer to
$$(T_{\ast,a},E_{\ast,a}),$$
as the equilibrium point for a positive small $a$, and under the remaining set of restrictions given on the parameters by Proposition \ref{Prop unique nontrivial equilibrium}.
\end{remark}

Moreover, the characteristic function for $(T_{\ast,a},E_{\ast,a})$ is given by
\begin{equation}\label{eq-caracteristica}
\mathcal{P}_{a}(\lambda,\tau_1,\tau_2)=(\lambda-\lambda^{\ast}_{1,a})(\lambda-\lambda^{\ast}_{2,a})+ h^{\prime}\big(T_{\ast,a})f(T_{\ast,a})T_{\ast,a} (pe^{-\tau_1 \lambda}-m e^{-\tau_2 \lambda}\big),
\end{equation}
where
\begin{equation}\label{valores de lambda}
\lambda^{\ast}_{1,a}=T_{\ast,a}f^{\prime}(T_{\ast,a})<0 \quad \text{and} \quad \lambda^{\ast}_{2,a}=(p-m)h(T_{\ast,a})-\eta .
\end{equation}

\vspace{0.3 cm}
Let us define $\displaystyle{\mathscr{D}_{\ast,a}:=\mathcal{P}_{a}(0,\tau_1,\tau_2)}$ which corresponds to
\begin{equation}\label{determinante}
\begin{split}
\mathscr{D}_{\ast,a}&=\lambda^{\ast}_{1,a} \lambda^{\ast}_{2,a}+h^{\prime}\big(T_{\ast,a})f(T_{\ast,a})T_{\ast,a}(p-m),\\
&=(p-m)T_{\ast,a}\big(f^{\prime}(T_{\ast,a})h(T_{\ast,a})+h^{\prime}(T_{\ast,a})f(T_{\ast,a})\big)-\eta T_{\ast,a}f^{\prime}(T_{\ast,a}).
\end{split}
\end{equation}

By the set of scale parameters \eqref{parametros-escalados}, $\lambda^{\ast}_{2,a}$, and $\mathscr{D}_{\ast,a}$ are given explicitly by
\[
\lambda^{\ast}_{2,a}=-\eta\Big(\frac{\mu T_{\ast,a}}{1+aT_{\ast,a}}+1\Big) \quad \text{and} \quad \mathscr{D}_{\ast,a}=-\eta T_{\ast,a}\Big(\Big(\frac{\mu T_{\ast,a}}{1+aT_{\ast,a}}+1\Big)f^{\prime}(T_{\ast,a})+\frac{\mu f(T_{\ast,a})}{(1+aT_{\ast,a})^2}\Big), 
\]
for all $\displaystyle{\tau=(\tau_1,\tau_2)\in \mathbb{R}^{+}}$. In particular, for $\displaystyle{a=0}$ we have 
\begin{equation}\label{Determinante cero}
\lambda^{\ast}_{2,0}=-\eta\big(\mu T_{\ast}+1\big) \quad \text{and} \quad \mathscr{D}_{\ast,0}=-\eta T_{\ast}\big(\big(\mu T_{\ast}+1\big)f^{\prime}(T_{\ast})+\mu f(T_{\ast})\big), 
\end{equation}
where $\displaystyle{T_{\ast}=T_{\ast,0}}$ is the unique solution of \eqref{ecuacion para T unico}. The first result on the stability of $(T_{\ast}, E_{\ast})$ is the following.
\begin{lemma}
Let $\displaystyle{a\geq 0}$ be small and assume that Proposition \ref{Prop unique nontrivial equilibrium} is valid for $a=0$. If $\displaystyle{\mathscr{D}_{0,\ast}<0}$ then the equilibrium $\displaystyle{(T_{\ast,a},E_{\ast,a})}$ is unstable for all $\tau_1,\tau_2\geq 0$.  
\end{lemma}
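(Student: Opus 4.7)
The plan is to exploit the fact that $\mathscr{D}_{\ast,a}$ is precisely the value of the characteristic function $\mathcal{P}_{a}(\lambda,\tau_1,\tau_2)$ at $\lambda=0$, which is independent of the delays because $e^{-\lambda\tau_j}=1$ when $\lambda=0$. Concretely, from \eqref{eq-caracteristica} one reads off
\begin{equation*}
\mathcal{P}_{a}(0,\tau_1,\tau_2)=\lambda^{\ast}_{1,a}\lambda^{\ast}_{2,a}+h'(T_{\ast,a})f(T_{\ast,a})T_{\ast,a}(p-m)=\mathscr{D}_{\ast,a},
\end{equation*}
for every $(\tau_1,\tau_2)\in\mathbb{R}^{2}_{+}$. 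So the hypothesis $\mathscr{D}_{0,\ast}<0$ gives a strictly negative value of $\mathcal{P}_0$ at $\lambda=0$, irrespective of the delays.

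Next I would pass from $a=0$ to small $a\geq 0$ by continuity. Since Proposition \ref{Prop unique nontrivial equilibrium} provides a \emph{simple} root $T_\ast$ of \eqref{equation for S} at $a=0$, the implicit function theorem yields a smooth branch $a\mapsto T_{\ast,a}$ defined on a neighbourhood of $0$, and consequently $a\mapsto \mathscr{D}_{\ast,a}$ is continuous. Thus $\mathscr{D}_{0,\ast}<0$ forces $\mathscr{D}_{\ast,a}<0$ for all sufficiently small $a\geq 0$, so $\mathcal{P}_{a}(0,\tau_1,\tau_2)<0$ uniformly in $(\tau_1,\tau_2)$.

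The final step is an elementary intermediate-value argument on the real axis. For real $\lambda$, $\mathcal{P}_a(\lambda,\tau_1,\tau_2)$ is a real-valued continuous function, and because the polynomial factor $(\lambda-\lambda^{\ast}_{1,a})(\lambda-\lambda^{\ast}_{2,a})$ grows quadratically while the exponential perturbation $h'(T_{\ast,a})f(T_{\ast,a})T_{\ast,a}(pe^{-\tau_1\lambda}-me^{-\tau_2\lambda})$ stays bounded (in fact decays to $0$ if $\tau_j>0$) as $\lambda\to+\infty$, we have
\begin{equation*}
\lim_{\lambda\to+\infty}\mathcal{P}_{a}(\lambda,\tau_1,\tau_2)=+\infty.
\end{equation*}
Combined with $\mathcal{P}_{a}(0,\tau_1,\tau_2)=\mathscr{D}_{\ast,a}<0$, the IVT produces a real $\lambda_{0}>0$ with $\mathcal{P}_{a}(\lambda_{0},\tau_1,\tau_2)=0$. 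This is a characteristic root with positive real part, so the linearisation at $(T_{\ast,a},E_{\ast,a})$ has an unstable eigenvalue and the equilibrium is unstable for every $\tau_1,\tau_2\geq 0$.

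The only mildly delicate point is the analytic continuation step: one must verify that Proposition \ref{Prop unique nontrivial equilibrium} really gives a simple zero of $S(\cdot,\mu,0)-(b-\sigma)$ so that the implicit function theorem applies and the neighbourhood of $a$'s in which $\mathscr{D}_{\ast,a}<0$ is non-empty; this is precisely what the uniqueness and transversality statements in Appendix A guarantee. Everything else is just evaluating the characteristic function at $\lambda=0$ and invoking the IVT.
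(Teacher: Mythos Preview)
Your proof is correct and follows essentially the same approach as the paper: evaluate $\mathcal{P}_a$ at $\lambda=0$ to get $\mathscr{D}_{\ast,a}$, observe that $\mathcal{P}_a(\lambda,\tau_1,\tau_2)\to+\infty$ as $\lambda\to+\infty$ on the real axis, and apply the Intermediate Value Theorem to produce a positive real characteristic root. The only cosmetic difference is the order of the continuity step: the paper first runs the IVT at $a=0$ and then invokes continuity in $a$ to perturb the root, whereas you first use continuity of $a\mapsto\mathscr{D}_{\ast,a}$ to keep the value at $\lambda=0$ negative and then apply the IVT directly to $\mathcal{P}_a$; both are equivalent, and your version is slightly more explicit about why the branch $(T_{\ast,a},E_{\ast,a})$ exists.
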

\begin{proof}
Direct calculations show that
\[
\mathcal{P}_{0}(0,\tau_1,\tau_2)=\mathscr{D}_{0,\ast}<0, \quad \text{and} \quad \lim_{\lambda \to \infty}\mathcal{P}_{0}(0,\tau_1,\tau_2)=\infty, 
\]
for all, for all $\tau_1,\tau_2\geq 0$. Then by the Intermediate Value Theorem, there exists $\displaystyle{\lambda_0>0}$ such that $\displaystyle{\mathcal{P}_{0}(\lambda_0,\tau_{1},\tau_2)=0.}$ By continuity and for $\displaystyle{a>0}$ small, the characteristic equation $\displaystyle{\mathcal{P}_{a}(\lambda,\tau_{1},\tau_2)=0}$ has a real positive solution $\displaystyle{\lambda_a}$. This shows that $(T_{\ast,a},E_{\ast,a})$ is unstable.
\end{proof}
From now on, we will establish sufficient conditions for the local stability of this equilibrium. Let us begin with the following result.

\begin{theorem}
Let $a,\tau_1, \tau_2 \geq 0$ be small. 
\begin{itemize}
    \item[1.] If condition $\dagger)$ in Proposition \ref{Prop unique nontrivial equilibrium} holds, then $(T_{\ast,a}, E_{\ast,a})$ is a locally asymptotically stable equilibrium point of \eqref{Main EQ aut}.
\item[2.] If condition $\ddagger)$ in Proposition  \ref{Prop unique nontrivial equilibrium} holds, then $(T_{\ast}, E_{\ast})$ is locally asymptotically stable (stable node), provided that one of the following conditions holds:
\begin{itemize}
    \item[a.] $\displaystyle{\sigma\leq \frac{b \beta}{1+\beta}<b.}$
\item[b.] $\displaystyle{\frac{b \beta}{1+\beta}<\sigma<\sigma_{\Delta}}$ and $\displaystyle{0<\mu_{\Delta}(\sigma)<\mu< \mu_{c}}$ where
\[
\sigma_{\Delta}=\frac{b \beta}{1+\beta}\Big(1+\mu_{c}\Big(\frac{b}{1+\beta}\Big)^{1/\beta}\Big) \quad \text{and} \quad \mu_{\Delta}(\sigma)=\frac{1}{b\beta}\Big(\frac{1+\beta}{b}\Big)^{1/\beta}\big((1+\beta)\sigma-b\beta\big).
\]
\end{itemize}
\end{itemize}
\end{theorem}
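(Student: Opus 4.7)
The approach is to establish local asymptotic stability at the reference parameters $a = \tau_1 = \tau_2 = 0$ via a Routh--Hurwitz argument on the resulting characteristic polynomial, and then extend to small $a, \tau_1, \tau_2 > 0$ by continuity of the roots of $\mathcal{P}_a(\lambda, \tau_1, \tau_2)$. When the delays and $a$ all vanish, the transcendental terms in \eqref{eq-caracteristica} collapse and the characteristic equation reduces to the quadratic
\[
\lambda^2 - (\lambda^{\ast}_{1,0} + \lambda^{\ast}_{2,0})\lambda + \mathscr{D}_{\ast, 0} = 0,
\]
whose roots lie in the open left half-plane if and only if $\lambda^{\ast}_{1,0} + \lambda^{\ast}_{2,0} < 0$ and $\mathscr{D}_{\ast, 0} > 0$.

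Using $f(T) = r(b - T^\beta)$ together with \eqref{Determinante cero}, the trace condition becomes $r\beta T_{\ast}^\beta + \eta(\mu T_{\ast} + 1) > 0$, while the determinant condition reduces to
\[
\Xi(T_{\ast}, \mu) := \beta T_{\ast}^{\beta - 1} + \mu\bigl((\beta + 1) T_{\ast}^\beta - b\bigr) > 0.
\]
A preliminary identity extracted from \eqref{ecuacion para T unico}, namely
\[
\mu T_{\ast} + 1 = \frac{\sigma}{b - T_{\ast}^\beta} > 0,
\]
disposes of the trace condition for every admissible nontrivial equilibrium, so the whole analysis reduces to verifying $\Xi > 0$ under each hypothesis.

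For case $\dagger)$ with $\mu \leq 0$, the equilibrium equation implies $T_{\ast}^\beta \leq b - \sigma$; multiplying $\Xi$ by $T_{\ast}$ and substituting $\mu T_{\ast}^{\beta+1} = (b-\sigma) - T_{\ast}^\beta + \mu b T_{\ast}$ from \eqref{ecuacion para T unico}, and then using the identity above for $\mu T_{\ast}$, one obtains a quadratic expression in $u := T_{\ast}^\beta$ whose sign on the interval $[0, b-\sigma]$ can be checked by evaluating at the endpoints (where it equals $b(b-\sigma) > 0$ and $\beta \sigma(b - \sigma) > 0$ respectively) and noting that its vertex lies to the right of $b - \sigma$ for $\beta \leq 1$. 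For sub-case (a) of $\ddagger)$, the inequality $\sigma \leq b\beta/(1+\beta)$ yields $b - \sigma \geq b/(1+\beta)$; since $\mu > 0$ forces $T_{\ast}^\beta > b - \sigma$ through the equilibrium, we get $(1+\beta) T_{\ast}^\beta - b > 0$, so both summands in $\Xi$ are strictly positive. Sub-case (b), where $b\beta/(1+\beta) < \sigma < \sigma_\Delta$ and $0 < \mu_\Delta(\sigma) < \mu < \mu_c$, is the most delicate: here the bound $T_{\ast}^\beta > b/(1+\beta)$ is not automatic, and the explicit thresholds $\sigma_\Delta$ and $\mu_\Delta(\sigma)$ have to be recognized as precisely the boundary values at which $\Xi$ vanishes along the equilibrium curve, combined with the cap $\mu < \mu_c$ from Proposition \ref{Prop unique nontrivial equilibrium}.

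Once stability is verified at $(a, \tau_1, \tau_2) = (0, 0, 0)$, the extension to small positive values follows from analyticity and continuity of the roots of $\mathcal{P}_a$ in its parameters: the two quadratic roots lie strictly in the open left half-plane at the reference point, and by property (d) of the Theoretical Background every additional root introduced by the exponentials $e^{-\tau_j \lambda}$ has arbitrarily negative real part as $\tau_j \to 0^+$, so no root can reach the imaginary axis for $a, \tau_1, \tau_2$ sufficiently small and local asymptotic stability persists. The principal obstacle is sub-case (b): the identification of $\sigma_\Delta$ and $\mu_\Delta(\sigma)$ as the exact boundary curve of the stability region requires a careful interplay between the implicit equilibrium relation, the zero set of the quadratic obtained from $\Xi$, and the upper cap $\mu_c$.
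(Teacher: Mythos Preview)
Your overall framework---reduce to the Routh--Hurwitz conditions at $a=\tau_1=\tau_2=0$ and then invoke continuity---matches the paper's, and your treatment of Part~1 and Part~2a is correct, though in Part~1 you work harder than necessary: the paper simply writes $\mathscr{D}_{\ast,0}=-\eta T_\ast\bigl((\mu T_\ast+1)f'(T_\ast)+\mu f(T_\ast)\bigr)$ and observes that, since $f'(T_\ast)<0$, $f(T_\ast)>0$, $\mu\le 0$, and $1+\mu T_\ast>0$, both summands in the bracket are nonpositive (the first strictly), so $\mathscr{D}_{\ast,0}>0$ immediately---no quadratic analysis is needed.

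There is, however, a genuine gap in your handling of Part~2b. You describe $\mu_\Delta(\sigma)$ and $\sigma_\Delta$ as ``precisely the boundary values at which $\Xi$ vanishes along the equilibrium curve,'' but this is incorrect: at $\mu=\mu_\Delta(\sigma)$ one has $T_\ast^\beta=b/(1+\beta)$, so the second summand of $\Xi$ vanishes while the first, $\beta T_\ast^{\beta-1}$, remains strictly positive---hence $\Xi>0$ there, not $\Xi=0$. The actual role of $\mu_\Delta$ in the paper is different: it is the unique value of $\mu$ for which the equilibrium equation \eqref{ecuacion para T unico} admits the specific root $T=\bigl(b/(1+\beta)\bigr)^{1/\beta}$. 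The paper then uses the \emph{monotonicity of $T_\ast(\mu)$ in $\mu$} (coming from the properties of $h_\mu$ in Appendix~A) to conclude that for $\mu>\mu_\Delta(\sigma)$ one has $T_\ast^\beta(\mu)>b/(1+\beta)$, which makes both terms of $\Xi$ positive. Your sketch neither identifies this monotonicity argument nor provides an alternative route to positivity of $\Xi$ in the range $\mu_\Delta<\mu<\mu_c$, so sub-case~(b) is not actually proved. Additionally, the theorem asserts a \emph{stable node} in Part~2, i.e.\ real eigenvalues; the paper verifies this via the identity $(\lambda_{1,a}^\ast+\lambda_{2,a}^\ast)^2-4\mathscr{D}_{\ast,a}=(\lambda_{1,a}^\ast-\lambda_{2,a}^\ast)^2+4\mu\eta T_{\ast,a}f(T_{\ast,a})/(1+aT_{\ast,a})^2>0$ for $\mu>0$, which your Routh--Hurwitz argument alone does not give.
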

\begin{proof}
For $\tau=(0,0)$, it is easy to see that, for all $a\geq 0$ we have
\[
\begin{split}
\mathcal{P}_{a}(\lambda,0,0)&=(\lambda-\lambda^{\ast}_{1,a})(\lambda-\lambda^{\ast}_{2,a})+\gamma(p-m) h^{\prime}\big(T_{\ast,a})T_{\ast,a} E_{\ast,a},\\
&=\lambda^2-(\lambda^{\ast}_{1,a}+\lambda^{\ast}_{2,a})\lambda+\mathscr{D}_{\ast,a}.
\end{split}
\]
where $\lambda^{\ast}_{1,a}, \lambda^{\ast}_{2,a}$ and $\mathscr{D}_{\ast,a}$ given by \eqref{valores de lambda} and \eqref{determinante} respectively. Moreover, 
\[
(\lambda_{1,a}+\lambda_{2,a})^2-4\mathscr{D}_{\ast,a}=(\lambda_{1,a}-\lambda_{2,a})^2+\frac{4\mu \eta T_{\ast,a}f(T_{\ast,a})}{(1+aT_{\ast,a})^{2}}.
\]
On the other hand, in particular, for $a=0$ we can rewrite equation \eqref{ecuacion para T unico}, as follows
\[
\mu(T^{\beta}_{\ast}-b)T_{\ast}+T^{\beta}_{\ast}=b-\sigma \quad \Leftrightarrow \quad (1+\mu T_{\ast})(b-T^{\beta}_{\ast})=\sigma,
\]
then $\displaystyle{1+\mu T_{\ast}>0}$ for all $\displaystyle{\mu \in \mathbb{R}}$. Now, from \eqref{Determinante cero} and the condition $\dagger)$ in Proposition \ref{Prop unique nontrivial equilibrium} we deduce that
\[
\lambda^{\ast}_{1,0}+\lambda^{\ast}_{2,0}<0 \quad \text{and} \quad \mathscr{D}_{\ast,0}>0.
\]
Consequently, $\mathcal{P}_{0}(\lambda,0,0)$ has two distinct roots, both with negative real part. This implies that $(T_{\ast}, E_{\ast})$ is locally asymptotically stable for small $\tau_1,\tau_2\geq 0$. The previous result and a simple argument of continuity complete the proof of part 1. of the theorem.

 \vspace{0.3 cm}
 For part 2a. Let us assume that condition $\ddagger)$ is met. Firstly, for all $\mu>0$ we have 
\[
\lambda_{1,a}+\lambda^{\ast}_{2,a}<0 \quad \text{and} \quad (\lambda_{1,a}+\lambda_{2,a})^2-4\mathscr{D}_{\ast,a}>0.
\]
Therefore, the roots of $\mathcal{P}_0(\lambda,0,0)$ are real.  Our attention now turns to the analysis of the positivity of  $\mathscr{D}_{\ast, a}$, which can be 
considered a function of $\displaystyle{\mathscr{D}_{\ast, a}(\mu)}$. Notice that
\begin{equation}\label{function Delta cero new}
\begin{split}
\mathscr{D}_{\ast,0}(\mu):&=-\eta T_{\ast}\big(\mu f(T_{\ast})+f^{\prime}(T_{\ast})\big(1+\mu T_{\ast}\big)\big),\\
\end{split}
\end{equation}
where $\displaystyle{T_{\ast}=T_{\ast}(\mu)}$ is the only solution of \eqref{ecuacion para T unico}. Recall that the positive sign of $\mathscr{D}_{\ast,0}(\mu)$ implies the local asymptotical stability of $(T_{\ast}, E_{\ast})$. With this in mind, we proceed as follows.
Since for all $0<\mu<\mu_c$ we have 
$$\displaystyle{T^{\beta}_{\ast}(0)=b-\sigma<T^{\beta}_{\ast}(\mu)<T^{\beta}_{\ast}(\mu_{c}),}$$
therefore,
\[
\text{If} \quad \frac{b}{1+\beta}\leq b-\sigma \quad \Leftrightarrow \quad \sigma \leq \frac{b\beta}{1+\beta}, \quad \text{then} \quad \frac{b}{1+\beta}<T_{\ast}^{\beta}.
\] 
As a consequence, $\mathscr{D}_{\ast,0}(\mu)>0,$ for all $\beta \in ]0,1[$ and $0<\mu<\mu_{c}$. Proof of part 2b: Notice that there exists $\mu_{\Delta}>0$ such that \eqref{ecuacion para T unico} has the solution $\displaystyle{T=\Big(\frac{b}{1+\beta}\Big)^{1/\beta}}$. 
In fact,
\[
\mu\Big(\frac{b}{1+\beta}-b\Big)\Big(\frac{b}{1+\beta}\Big)^{1/\beta}+\frac{b}{1+\beta}=b-\sigma,
\]
This implies,
\[
\mu=\mu_{\Delta}(\sigma):=\frac{1+\beta}{b\beta}\Big(\frac{1+\beta}{b}\Big)^{1/\beta}\Big(\sigma-\frac{b\beta}{1+\beta}\Big),
\]
which, in turn, implies $\displaystyle{\sigma>\frac{b \beta}{1+\beta}}$. At this point, we compare the value $\mu_{\Delta}(b)$ with respect to $\mu_{c}$. Then, straightforward computations show that
\[
\mu_{\Delta}(b)=\frac{1}{\beta}\Big(\frac{1+\beta}{b}\Big)^{1/\beta}>\mu_{c} \quad \Leftrightarrow \quad (1-\beta)^{\frac{1-\beta}{\beta}}>\frac{\beta}{(1+\beta)}.
\]
It is an easy exercise to prove that the last inequality is true for all $\beta \in \, [0,1].$ Therefore, we deduce the existence of a unique $\sigma=\sigma_{\Delta}$ such that $\displaystyle{\mu_{\Delta}(\sigma_{\Delta})=\mu_{c}}$ for all $\beta \in \, ]0,1[.$ Moreover, 
\[
\sigma_{\Delta}=\frac{b \beta}{1+\beta}\Big(1+\mu_{c}\Big(\frac{b}{1+\beta}\Big)^{1/\beta}\Big).
\]
Finally, following the proof of part 2a. for all $\displaystyle{\frac{b \beta}{1+\beta}<\sigma<\sigma_{\Delta}}$ and $\displaystyle{0<\mu_{\Delta}<\mu<\mu_c}$ we have
\[
T_{\ast}(\mu_{\Delta})=\Big(\frac{b}{1+\beta}\Big)^{1/\beta}<T_{\ast}(\mu),
\]
proving that $\mathscr{D}_{\ast,0}(\mu)>0.$ Taken $a\geq 0$ sufficiently small, the continuity of the function $\mathscr{D}_{\ast,a}(\mu)$ with respect $a$, complete the proof. 
\end{proof}


\begin{lemma} Assume $a,\tau_1. \tau_2 \geq 0$ to be small. If $\beta=1$ and $\displaystyle{\sigma<b}$, then $(T_{\ast.a}, E_{\ast.a})$ is locally asymptotically stable for all $\mu\in \mathbb{R}^{+}$. Moreover, if $\mu\geq 0$ is small, the same conclusion holds for all $\beta \in\, ]0,1[$ and $\sigma<b$.
\end{lemma}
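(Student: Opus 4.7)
The plan is to verify asymptotic stability first at $a=\tau_1=\tau_2=0$ by showing that both roots of $\mathcal{P}_{0}(\lambda,0,0)=\lambda^{2}-(\lambda^{\ast}_{1,0}+\lambda^{\ast}_{2,0})\lambda+\mathscr{D}_{\ast,0}$ lie in the open left half-plane, and then to invoke the continuous dependence of the (finitely many) right half-plane roots of the quasi-polynomial $\mathcal{P}_{a}(\lambda,\tau_1,\tau_2)$ on $(a,\tau_1,\tau_2)$---as already used in the preceding theorem---to promote the conclusion to small $a,\tau_1,\tau_2\ge 0$. Since $\lambda^{\ast}_{1,0}=T_{\ast}f'(T_{\ast})<0$ and $\lambda^{\ast}_{2,0}=-\eta(1+\mu T_{\ast})<0$ for every $\mu\ge 0$, the trace condition is automatic and the whole argument reduces to proving that $\mathscr{D}_{\ast,0}>0$.

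For the case $\beta=1$ with arbitrary $\mu\in\mathbb{R}^{+}$, specializing the expression for $\mathscr{D}_{\ast,0}(\mu)$ in \eqref{function Delta cero new} to $f(T)=r(b-T)$ gives $\mathscr{D}_{\ast,0}=\eta r T_{\ast}\bigl(1+\mu(2T_{\ast}-b)\bigr)$. Using the compatibility relation $(1+\mu T_{\ast})(b-T_{\ast})=\sigma$ derived from the defining equation \eqref{ecuacion para T unico}, I would rewrite
$$\mathscr{D}_{\ast,0}=\frac{\eta r T_{\ast}}{b-T_{\ast}}\,\bigl[\sigma-\mu(b-T_{\ast})^{2}\bigr],$$
and then substitute $\mu=(T_{\ast}-(b-\sigma))/(T_{\ast}(b-T_{\ast}))$ to reduce positivity of $\mathscr{D}_{\ast,0}$ to the single algebraic inequality $(b-T_{\ast})^{2}>\sigma(b-2T_{\ast})$. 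Letting $u:=b-T_{\ast}>0$, this is equivalent to $Q(u):=u^{2}-2\sigma u+\sigma b>0$, a quadratic in $u$ whose discriminant $4\sigma(\sigma-b)$ is strictly negative under the hypothesis $\sigma<b$. Hence $Q(u)>0$ for every real $u$, so $\mathscr{D}_{\ast,0}>0$ uniformly in $\mu\ge 0$ (the degenerate case $\mu=0$ being handled directly: $T_{\ast}=b-\sigma$ and $\mathscr{D}_{\ast,0}=\eta r(b-\sigma)>0$).

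For the case $\beta\in(0,1)$ with $\mu\ge 0$ small, at $\mu=0$ equation \eqref{ecuacion para T unico} collapses to $T_{\ast}^{\beta}=b-\sigma>0$, and a direct computation from \eqref{function Delta cero new} yields $\mathscr{D}_{\ast,0}(0)=-\eta T_{\ast}f'(T_{\ast})=\eta r\beta T_{\ast}^{\beta}>0$. The implicit function theorem applied to \eqref{ecuacion para T unico} shows that $\mu\mapsto T_{\ast}(\mu)$ is smooth in a right neighborhood of $0$, so $\mu\mapsto\mathscr{D}_{\ast,0}(\mu)$ is continuous there and stays strictly positive for all sufficiently small $\mu\ge 0$. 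Combining both cases with the continuity-of-roots argument outlined in the first paragraph yields the desired local asymptotic stability.

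The main obstacle is the algebraic step in the $\beta=1$ analysis: identifying the obstruction $\sigma-\mu(b-T_{\ast})^{2}$ after eliminating $\mu$ through the equilibrium equation, and then observing that the resulting quadratic $Q(u)$ has negative discriminant precisely when $\sigma<b$. This is what allows the positivity of $\mathscr{D}_{\ast,0}$ to hold uniformly in $\mu\ge 0$, in sharp contrast with the bounded-range conditions $\mu\le\mu_{c}$ appearing in the previous theorem; everything else---the trace condition, the $\beta<1$ case, and the extension to small $a,\tau_{1},\tau_{2}$---reduces to standard continuity or direct computation.
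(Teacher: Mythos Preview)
Your proof is correct. Both cases reduce, as you recognised, to showing $\mathscr{D}_{\ast,0}>0$ at $a=\tau_1=\tau_2=0$, and your discriminant argument for $\beta=1$ is valid: after eliminating $\mu$ through the equilibrium relation, $Q(u)=u^{2}-2\sigma u+\sigma b$ has discriminant $4\sigma(\sigma-b)<0$ and so is positive everywhere. Your treatment of $\mu=0$ for general $\beta$ and the continuity extension to small $a,\tau_1,\tau_2$ match the paper exactly.

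The paper, however, takes a shorter route for $\beta=1$. Instead of eliminating $\mu$, it first uses the equilibrium equation $(1+\mu T_{\ast})(b-T_{\ast}^{\beta})=\sigma$ to rewrite $\mathscr{D}_{\ast,0}$ for \emph{every} $\beta\in(0,1]$ as
\[
\mathscr{D}_{\ast,0}(\mu)=r\eta\bigl((\beta-1)T_{\ast}^{\beta}+(b-\sigma)+\mu\beta T_{\ast}^{\beta+1}\bigr).
\]
For $\beta=1$ the first summand vanishes and one is left with $r\eta\bigl((b-\sigma)+\mu T_{\ast}^{2}\bigr)$, which is manifestly positive for all $\mu\ge 0$ under $\sigma<b$---no discriminant computation needed. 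The same formula also handles the $\mu=0$ case uniformly: the last summand vanishes, $T_{\ast}^{\beta}=b-\sigma$, and one reads off $r\eta\beta(b-\sigma)>0$. So the paper's single substitution does the work of both of your case analyses in one line; your argument is correct but more laborious, and the quadratic-discriminant step, while neat, is unnecessary once the determinant is written in the form above.
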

\begin{proof}
Let $a=0$, $\tau=(0,0)$ and assume that $\sigma<b$. A direct computation shows that the function $\mathscr{D}_{\ast,0}(\mu)$ can be written as
\[
\begin{split}
\mathscr{D}_{\ast,0}(\mu)&=-\eta T_{\ast}\big(f^{\prime}(T_{\ast})+\mu(f(T_{\ast})+T_{\ast}f^{\prime}(T_{\ast}))\big)\\
\mathscr{D}_{\ast,0}(\mu)&=r\eta \big((\beta-1)T_{\ast}^{\beta}+b-\sigma+\mu \beta T_{\ast}^{\beta+1})).
\end{split}
\]
Then, for $\beta=1$ we have $\mathscr{D}_{\ast,a}(\mu)>0$ for all $\mu\geq 0$ and $a\geq 0$ sufficiently small.
Finally, if $\mu=0$ then $T_{\ast}^{\beta}=b-\sigma$, therefore
\[
\mathscr{D}_{\ast,0}(0)=r \eta  \big((\beta-1)(b-\sigma)+b-\sigma)\big)=r\beta\eta (b-\sigma)>0.
\]
A simple argument of continuity completes the proof.
\end{proof}

\subsection{Local stability for non-small delays}
This section is devoted to exploring the stability of $(T_{\ast,a},E_{\ast,a})$ for small $\displaystyle{a\geq 0}$ and $\tau_1,\tau_2\geq 0$ that are not necessarily small. For simplicity in the following computations, we rewrite the characteristic function \eqref{eq-caracteristica} as follows
\[
\mathcal{P}_{a}(\lambda,\tau_1,\tau_2)=P(\lambda)+R(pe^{-\tau_1\lambda}-me^{-\tau_2\lambda}),
\]
with $\displaystyle{P(\lambda)=(\lambda-\lambda^{\ast}_{1,a})(\lambda-\lambda^{\ast}_{2,a})}$ and $\displaystyle{R:=f(T_{\ast,a})h^{\prime}(T_{\ast,a})T_{\ast,a}}$. Let us assume that $\lambda=x+iy$, $x,y\in \mathbb{R}$ is a solution of \eqref{eq-caracteristica}. Dividing the equation into the real and imaginary parts,  we obtain 
\begin{equation}\label{newsystem for lambda full}
\begin{split}
P(x)-y^2+R\big(p\cos(\tau_{1}y)e^{-\tau_{1}x}-m\cos(\tau_{2}y)e^{-\tau_{2}x}\big)&=0,\\
2xy-(\lambda^{\ast}_{1,a}+\lambda^{\ast}_{2,a})y-R\big(p\sin(\tau_{1}y)e^{-\tau_{1}x}-m\sin(\tau_{2}y)e^{-\tau_{2}x}\big)&=0.
\end{split}
\end{equation}
Notice that if $\tau_1=\tau_2=\tau$ we obtain
\begin{equation}\label{characteristic function with equal delays}
\mathcal{P}_{a}(\lambda,\tau)=P(\lambda)-Ne^{-\tau\lambda},
\end{equation}
with $N:=(m-p)R$ which can be divided into \footnote{In the set of scale parameters \eqref{parametros-escalados} we have $\displaystyle{N=\frac{\eta \mu T_{\ast,a} f(T_{\ast,a})}{(1+aT_{\ast,a})^2}}$. Further, for $a=0$, $N=\eta \mu T_{\ast}f(T_{\ast})$.} 
\begin{equation}\label{newsystem for lambda full equals}
\begin{split}
P(x)-y^2-N\cos(\tau y)e^{-\tau x}&=0,\\
2xy-(\lambda^{\ast}_{1,a}+\lambda^{\ast}_{2,a})y+N\sin(\tau y)e^{-\tau x}&=0.
\end{split}
\end{equation}
We begin our analysis considering the characteristic function \eqref{characteristic function with equal delays}. 
To this end, we apply Theorems A, B, and C in the Appendix. 

\begin{theorem} Assume that $a\geq0$ is sufficiently small. If condition $\dagger)$ in Proposition \ref{Prop unique nontrivial equilibrium} holds and 
\[
f^{\prime}(T_{\ast})(\mu T_{\ast}+1)-\mu f(T_{\ast})<0,
\]    
then $(T_{\ast,a}, E_{\ast,a})$ is a locally asymptotically stable equilibrium point of \eqref{Main EQ aut} for all values of $\tau$ satisfying
    \[
    0\leq \tau< \frac{T_{\ast,a}f^{\prime}(T_{\ast,a})-\eta\Big(\frac{\mu T_{\ast}}{1+aT_{\ast,a}}+1\Big)}{\frac{\mu \eta T_{\ast,a}f(T_{\ast,a})}{(1+a T_{\ast,a})^2}} \approx  \frac{T_{\ast}f^{\prime}(T_{\ast})-\eta(\mu T_{\ast}+1)}{\mu \eta T_{\ast}f(T_{\ast})}.
    \]
Moreover, for $a=0$, there exists $\tau_{c}>0$ given by
\[
\tau_{c}=\frac{1}{\hat{y}}\arctan\Big[-\frac{(\lambda^{\ast}_{1,0}+\lambda^{\ast}_{2,0})\hat{y}}{\lambda^{\ast}_{1,0}\lambda^{\ast}_{2,0}-\hat{y}^2}\Big]
\]
with $\hat{y}$ the smallest positive solution of
\[
G(y)=y^{4}+\Big(\big(\lambda^{\ast}_{1,0})^{2}+\big(\lambda^{\ast}_{2,0}\big)^{2}\Big)y^2+\big(\lambda^{\ast}_{1.0}\lambda^{\ast}_{2,0})^{2}-N^{2}, 
\]
with $N=\eta \mu T_{\ast} f(T_{\ast})$ for which $\tau< \tau_0$ the equilibrium point $(T_{\ast},E_{\ast})$ is asymptotically stable and for $\tau< \tau_0$ is unstable. Furthermore, as  $\tau$ increases through $\tau_0$, $(T_{\ast}, E_{\ast})$ bifurcates into stable periodic solutions with small amplitude.
\end{theorem}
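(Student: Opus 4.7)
The plan is to treat the characteristic equation
\[
\mathcal{P}_a(\lambda,\tau) = (\lambda-\lambda^{\ast}_{1,a})(\lambda-\lambda^{\ast}_{2,a}) - Ne^{-\tau\lambda} = 0
\]
as a one-parameter family in $\tau$ and to detect when roots cross the imaginary axis, along the lines of the Cooke--Grossman analysis encoded in Theorems~A--C of the appendix. I would anchor the argument at $\tau=0$: by the preceding theorem condition $\dagger)$ already guarantees that the quadratic $\mathcal{P}_a(\lambda,0)=\lambda^{2}-(\lambda^{\ast}_{1,a}+\lambda^{\ast}_{2,a})\lambda+\mathscr{D}_{\ast,a}$ has both roots in the left half-plane, since $\lambda^{\ast}_{1,a},\lambda^{\ast}_{2,a}<0$ and $\mathscr{D}_{\ast,a}>0$. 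The stability-switching lemma of Section~\ref{secTB} then reduces the whole question to the detection of purely imaginary roots.

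Substituting $\lambda=iy$ with $y>0$ in \eqref{characteristic function with equal delays}, splitting into real and imaginary parts via \eqref{newsystem for lambda full equals}, and eliminating $\tau$ by squaring and adding produces exactly the biquadratic $G(y)=y^{4}+\bigl((\lambda^{\ast}_{1,0})^{2}+(\lambda^{\ast}_{2,0})^{2}\bigr)y^{2}+(\lambda^{\ast}_{1,0}\lambda^{\ast}_{2,0})^{2}-N^{2}$. Viewed as a quadratic in $u=y^{2}$ with positive leading coefficient and negative sum of roots, $G$ admits a positive root iff its constant term is negative, i.e.\ $|N|>|\lambda^{\ast}_{1,0}\lambda^{\ast}_{2,0}|$. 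A direct algebraic manipulation using $\lambda^{\ast}_{1,0}=T_{\ast}f'(T_{\ast})$, $\lambda^{\ast}_{2,0}=-\eta(\mu T_{\ast}+1)$, and $N=\eta\mu T_{\ast}f(T_{\ast})$ converts this inequality into the sign condition on $f'(T_{\ast})(\mu T_{\ast}+1)-\mu f(T_{\ast})$ that appears in the hypothesis. Denoting $\hat y$ the smallest positive root of $G$ and solving \eqref{newsystem for lambda full equals} for $\tau$ with $\cos(\tau\hat y)$ and $\sin(\tau\hat y)$ obtained from the real and imaginary identities yields the explicit arctangent formula stated for $\tau_c$.

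For the stability bound in the first half of the theorem I would use the first-order expansion $e^{-\tau\lambda}\approx 1-\tau\lambda$, which turns $\mathcal{P}_a(\lambda,\tau)=0$ into the quadratic $\lambda^{2}+\bigl(N\tau-(\lambda^{\ast}_{1,a}+\lambda^{\ast}_{2,a})\bigr)\lambda+\mathscr{D}_{\ast,a}$; Routh--Hurwitz stability of this quadratic, combined with $\mathscr{D}_{\ast,a}>0$ and the sign of $N$ inherited from $\dagger)$, reduces precisely to the bound stated in the theorem after substituting the explicit expressions for $\lambda^{\ast}_{i,a}$ and $N$. To upgrade this heuristic I would invoke Rouch\'e's theorem on a contour bounding a strip of the right half-plane (or directly Theorem~A of the appendix) to guarantee that no zero of the full transcendental $\mathcal{P}_a(\cdot,\tau)$ enters $\mathbb{C}_{+}$ before this bound is reached; a continuity argument in $a$ then extends the conclusion from $a=0$ to $a\geq 0$ small.

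Finally, for the Hopf assertion at $\tau_c$ I would verify the transversality condition by implicit differentiation of $\mathcal{P}_0(\lambda,\tau)=0$, yielding
\[
\left.\frac{d\lambda}{d\tau}\right|_{\lambda=i\hat y,\,\tau=\tau_c}=\frac{\lambda N e^{-\tau\lambda}}{P'(\lambda)+\tau N e^{-\tau\lambda}}\bigg|_{\lambda=i\hat y,\,\tau=\tau_c},
\]
whose real part, after using the real/imaginary identities already satisfied at the crossing, reduces to the sign of $G'(\hat y^{2})$; this is positive because $\hat y^{2}$ is the smallest positive root of a quadratic in $u$ with positive leading coefficient. Theorem~C of the appendix (the Hopf theorem for DDEs) then delivers the one-parameter family of small-amplitude periodic solutions branching off at $\tau_c$. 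The hardest step I anticipate is establishing super-criticality (stability) of the bifurcating branch, which requires a center-manifold reduction at $\tau_c$ and a sign computation of the first Lyapunov coefficient — conceptually standard but algebraically heavy in this model.
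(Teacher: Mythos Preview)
Your approach is essentially the same as the paper's: verify stability at $\tau=0$ via condition~$\dagger)$, derive the biquadratic $G(y)$ by splitting $\mathcal{P}_a(iy,\tau)=0$ into real and imaginary parts and eliminating $\tau$, and then invoke Theorems~A and~C of the appendix for the explicit stability bound and the Hopf bifurcation respectively. The paper does not use your first-order Taylor expansion of $e^{-\tau\lambda}$ at all---it goes straight to Theorem~A to obtain the bound $\tau_a=(\lambda^{\ast}_{1,a}+\lambda^{\ast}_{2,a})/N$---and it does not carry out your explicit transversality or super-criticality computations, simply citing Theorem~C as a black box; those additional steps in your proposal are sound but unnecessary for reproducing the paper's argument.
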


\begin{proof}
Let us begin with the proof of part 1. Consider $\lambda^{\ast}_{1,a}$ and $\lambda^{\ast}_{2,a}$ to be given by \eqref{valores de lambda} namely
\[
\lambda^{\ast}_{1,a}=T_{\ast,a}f^{\prime}(T_{\ast,a})<0 \quad \text{and } \quad \lambda^{\ast}_{2,a}=-\eta\Big(\frac{\mu T_{\ast,a}}{1+aT_{\ast,a}}+1\Big).
\]
Recall that $\lambda^{\ast}_{2,0}<0$ for all $\mu \in \mathbb{R}$. Define the function $\displaystyle{\mathcal{K}(a):=\lambda^{\ast}_{1,a}\lambda^{\ast}_{2,a}+N}$, explicitly given by 
\[
\mathcal{K}(a):=-\eta T_{\ast,a}\Big[f^{\prime}(T_{\ast,a})\Big(\frac{\mu T_{\ast,a}}{1+aT_{\ast,a}}+1\Big)- \Big(\frac{\mu f(T_{\ast,a})}{(1+aT_{\ast,a})^2}\Big)\Big],
\]
with 
\[
\mathcal{K}(0)=-\eta T_{\ast}\big(f^{\prime}(T_{\ast})(\mu T_{\ast}+1)-\mu f(T_{\ast})\big).
\]
By hypothesis $\mathcal{K}(0)>0$ we deduce the following:
\begin{itemize}
\item [$1.$] $\lambda^{\ast}_{2,a}<0$ and $\lambda^{\ast}_{1,a}+\lambda^{\ast}_{2,a}<0,$
    \item [$2.$] $\mathcal{K}(a)=\lambda^{\ast}_{1,a}\lambda^{\ast}_{2,a}+N<0 \quad \Leftrightarrow \quad \lambda^{\ast}_{1,a}\lambda^{\ast}_{2,a}<-N, $ 
\end{itemize}
for all $a\geq 0$ and sufficiently small. By continuity and Theorem A., the equilibrium point $(T_{\ast,a},E_{\ast,a})$ is asymptotically stable for all $\tau \in [0,\tau_{a})$ with  
\[
\tau_{a}=\frac{\lambda^{\ast}_{1,a}+\lambda^{\ast}_{2,a}}{N}=\frac{T_{\ast,a}f^{\prime}(T_{\ast,a})-\eta\Big(\frac{\mu T_{\ast}}{1+aT_{\ast,a}}+1\Big)}{\frac{\mu \eta T_{\ast,a}f(T_{\ast,a})}{(1+a T_{\ast,a})^2}},
\]
for all $a\geq 0$ and sufficiently small, where $\displaystyle{\tau_{a}\approx \tau_{0}:=\frac{T_{\ast}f^{\prime}(T_{\ast})-\eta(\mu T_{\ast}+1)}{\mu \eta T_{\ast}f(T_{\ast})}.}$

Now, for the calculation of $\tau_{c}$ for which \eqref{newsystem for lambda full equals} has a solution of the form $\lambda=iy$ (i.e., $x=0$) we have (for $a=0$)
\begin{equation}\label{ecuación para tau crítico}
\begin{split}
\lambda^{\ast}_{1,0}\lambda^{\ast}_{2,0}-y^2-N\cos(\tau y)&=0,\\
-(\lambda^{\ast}_{1,0}+\lambda^{\ast}_{2,0})y+N\sin(\tau y)&=0,
\end{split}
\end{equation}
implying that $G(y)=0$ with $G$ the quadratic function in $y^{2}$ given by
\[
G(y):=y^{4}+\Big(\big(\lambda^{\ast}_{1,0})^{2}+\big(\lambda^{\ast}_{2,0}\big)^{2}\Big)y^2+(\lambda^{\ast}_{1,0}\lambda^{\ast}_{2,0}-N)(\lambda^{\ast}_{1,0}\lambda^{\ast}_{2,0}+N).
\]
where $N=\eta \mu T_{\ast}f(T_{\ast})$. Since $\displaystyle{G(0)=(\lambda^{\ast}_{1,0}\lambda^{\ast}_{2,0}-N)(\lambda^{\ast}_{1,0}\lambda^{\ast}_{2,0}+N)}<0$ there exist $\hat{y}>0$ such that $\displaystyle{G(\hat{y})=0}$ moreover, $\displaystyle{\hat{y}}$ is given explicitly by
\[
\hat{y}=\sqrt{\frac{-\Big[\big(\lambda^{\ast}_{1,0})^{2}+\big(\lambda^{\ast}_{2,0}\big)^{2}\Big]+\sqrt{\Big[\big(\lambda^{\ast}_{1,0})^{2}+\big(\lambda^{\ast}_{2,0}\big)^{2}\Big]^2-4\Big(\big(\lambda^{\ast}_{1.0}\lambda^{\ast}_{2,0})^{2}-N^{2}\Big)}}{2}}
\]
From \eqref{ecuación para tau crítico} we deduce
\[
\tan (\tau y)=-\frac{(\lambda^{\ast}_{1,0}+\lambda^{\ast}_{2,0})y}{\lambda^{\ast}_{1,0}\lambda^{\ast}_{2,0}-y^2}
\]
The critical value $\tau_{c}$ is given by
\[
\tau_{c}:=\tau_{0} \quad \text{where} \quad \tau_{k}:=\frac{1}{\hat{y}}\arctan\Big[-\frac{(\lambda^{\ast}_{1,0}+\lambda^{\ast}_{2,0})\hat{y}}{\lambda^{\ast}_{1,0}\lambda^{\ast}_{2,0}-\hat{y}^2}\Big]+\frac{2k\pi}{\hat{y}}, \quad k=0,1,2, \dots
\]
The conclusion of the existence of a Hopf bifurcation at the critical time-delay value $\tau_0$  follows from Theorem C. This completes the proof.
\end{proof}

\textbf{Stability for non equal time delays.} The next lines provides the general frame where the stability of the equilibrium point $(T_{\ast,a},E_{\ast,a})$ is guaranteed for $\tau_1,\tau_2\in \mathbb{R}^2_{+}$. From system \eqref{newsystem for lambda full} we have
\[
\begin{split}
\mathcal{Q}_{0}(x,y)+Rp \cos(\tau_{1}y)e^{-\tau_{1}x}=Rm\cos(\tau_{2}y)e^{-\tau_{2}x},\\
\mathcal{Q}_{1}(x,y)-Rp \sin(\tau_{1}y)e^{-\tau_{1}x}=-Rm\sin(\tau_{2}y)e^{-\tau_{2}x},
\end{split}
\]
where
\[
\mathcal{Q}(x,y)=P(x)-y^2 \quad \text{and} \quad \mathcal{Q}_{1}(x,y)=2xy-(\lambda^{\ast}_{1,a}+\lambda^{\ast}_{2,a})y.
\]
A direct calculation shows that
\[
\mathcal{Q}^2_{0}(x,y)+\mathcal{Q}^2_{1}(x,y)+2Rp( \mathcal{Q}_{0}(x,y)\cos(\tau_{1}y)- \mathcal{Q}_{1}(x,y)\sin(\tau_{1}y))e^{-\tau_{1}x}=(Rm)^2e^{-2\tau_{2}x}-(Rp)^{2}e^{-2\tau_{1}x}.
\]

Now, we look for solutions of the form $\lambda=iy$ (i.e., $x=0$). In this case, we obtain the following
\begin{equation}\label{stability switching curves}
\mathcal{Q}^2_{0}(y)+\mathcal{Q}^2_{1}(y)+2Rp(\mathcal{Q}_{0}(y)\cos(\tau_1 y)-\mathcal{Q}_{1}(y)\sin(\tau_1 y))=(Rm)^2-(Rp)^2,
\end{equation}
with
\[
\mathcal{Q}_{0}(y):=\mathcal{Q}_{0}(0,y)=\lambda^{\ast}_{1,a}\lambda^{\ast}_{2,a}-y^2 \quad \text{and} \quad \mathcal{Q}_{1}(y):=\mathcal{Q}_{1}(0,y)=-(\lambda^{\ast}_{1,a}+\lambda^{\ast}_{2,a})y.
\]
Under the condition $\dagger)$ in Proposition \ref{Prop unique nontrivial equilibrium}, for all $a\geq 0$ sufficiently small  we have $\lambda^{\ast}_{1,a}+\lambda^{\ast}_{2,a}<0$ which implies $\displaystyle{\mathcal{H}^2(y)=\mathcal{Q}^2_{0}(y)+\mathcal{Q}^2_{1}(y)>0}$. Consequently, there exists a continuous function $\varphi_{1}: \mathbb{R} \to \mathbb{R}$, $y \to \varphi_1(y)$ such that
\[
\mathcal{Q}_{0}(y)=\sqrt{\mathcal{Q}^2_{0}(y)+\mathcal{Q}^2_{1}(y)}\cos(\varphi_1(y)) \quad \text{and} \quad \mathcal{Q}_{1}(y)=\sqrt{\mathcal{Q}^2_{0}(y)+\mathcal{Q}^2_{1}(y)}\sin(\varphi_1(y)).
\]
In this scenario, equation \eqref{stability switching curves} becomes
\begin{equation}\label{ecuacion para el primer delay}
(\lambda^{\ast}_{1,a}\lambda^{\ast}_{2,a}-y^2)^2+(\lambda^{\ast}_{1,a}+\lambda^{\ast}_{2,a})^2y^2+R^2(p^2-m^2)=-2Rp \mathcal{H}(y)\cos(\tau_1y+\varphi_1(y)).
\end{equation}
There exists $\tau_1 \in \mathbb{R}_{+}$ satisfying the last equation if and only if
\begin{equation}\label{fundamental inequality for delay one}
\big|(\lambda^{\ast}_{1,a}\lambda^{\ast}_{2,a}-y^2)^2+(\lambda^{\ast}_{1,a}+\lambda^{\ast}_{2,a})^2y^2+R^2(p^2-m^2)\big|\leq 2Rp\mathcal{H}(y).
\end{equation}
Define $I\in \mathbb{R}_{+}$ so that condition \eqref{fundamental inequality for delay one} is met. For each $\hat{y} \in I$ define the function
\[
\vartheta(\hat{y}):=\arccos\Big(-\frac{(\lambda^{\ast}_{1,a}\lambda^{\ast}_{2,a}-\hat{y}^2)^2+(\lambda^{\ast}_{1,a}+\lambda^{\ast}_{2,a})^2\hat{y}^2+R^2(p^2-m^2)}{2Rp\sqrt{(\lambda^{\ast}_{1,a}\lambda^{\ast}_{2,a}-\hat{y}^2)^2+(\lambda^{\ast}_{1,a}+\lambda^{\ast}_{2,a})^2\hat{y}^2})}\Big), \quad \vartheta \in [0,\pi]
\]
Finally, from \eqref{ecuacion para el primer delay} we have
\begin{equation}\label{valor de tau1}
\tau^{\pm}_{1,s}(\hat{y}):=\frac{\pm \vartheta(\hat{y})-\varphi_1(\hat{y})}{\hat{y}}+\frac{2s\pi}{\hat{y}}, \quad s\in \mathbb{Z}.
\end{equation}
From this, it is possible to obtain an explicit formula for $\tau_2=\tau_{2}(\hat{y})$ throughout the system \eqref{newsystem for lambda full} as follows
\[
\tan (\tau_2 \hat{y})=\frac{Rp \sin(\tau_{1}(\hat{y})\hat{y})-\mathcal{Q}_1(\hat{y})}{\mathcal{Q}_{0}(\hat{y})+Rp \cos(\tau_{1}(\hat{y})\hat{y})},
\]
then,
\begin{equation}\label{valor de tau2}
\tau^{\pm}_{2,k}(\hat{y}):=\frac{1}{\hat{y}}\arctan\Big[\frac{Rp \sin(\tau^{\pm}_{1}(\hat{y})\hat{y})-\mathcal{Q}_1(\hat{y})}{\mathcal{Q}_{0}(\hat{y})+Rp \cos(\tau^{\pm}_{1}(\hat{y})\hat{y})}\Big]+\frac{2k\pi}{\hat{y}}, \quad k\in \mathbb{Z}.
\end{equation}

The previous computation provides the proof of the following result.
\begin{theorem}(Stability crossing curves)
Assume that $a\geq 0$ is sufficiently small and that the condition $\dagger)$ in Proposition \ref{Prop unique nontrivial equilibrium}. Let $I\in \mathbb{R}_{+}$ be the set where \eqref{fundamental inequality for delay one} holds. The stability of the equilibrium point $(T_{\ast,a},E_{\ast,a})$ changes if $(\tau_1,\tau_2) \in \mathbb{R}_{+}$ crosses the curves
\[
\mathcal{C}_{s,k}=\left\{(\tau^{\pm}_{1}(y),\tau^{\pm}(y)) \in \mathbb{R}^{2}_{+}:y\in I, s,k \in \mathbb{Z}\right\}.
\]
with $\displaystyle{\tau^{\pm}_{1},\tau^{\pm}_{2}}$ given by \eqref{valor de tau1} y \eqref{valor de tau2} respectively.
\end{theorem}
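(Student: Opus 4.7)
The strategy rests on the delay-independent principle stated in the second lemma of Section~\ref{secTB}: as $(\tau_1,\tau_2)$ varies continuously in $\mathbb{R}^2_{+}$, the number of characteristic roots in $\mathbb{C}_{+}$ can change only when a root appears on or crosses the imaginary axis. Hence the desired stability crossing curves are exactly the loci in the $(\tau_1,\tau_2)$-plane for which $\mathcal{P}_a(\lambda,\tau_1,\tau_2)=0$ admits a purely imaginary solution $\lambda=iy$ with $y\in\mathbb{R}_{+}$ (the case $y=0$ being excluded by hypothesis H2 combined with $\mathscr{D}_{\ast,a}\neq 0$, which follows from condition $\dagger)$ and the continuity in $a$).

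The first step is to substitute $\lambda=iy$ into \eqref{eq-caracteristica} and split into real and imaginary parts, which is precisely the system \eqref{newsystem for lambda full} with $x=0$. I would then isolate the $\tau_2$-terms on one side and eliminate $\tau_2$ by squaring and adding: using $\cos^2+\sin^2=1$, the $\tau_2$-dependence disappears and one is left with the single equation \eqref{ecuacion para el primer delay} in $(y,\tau_1)$.

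The second step is to solve that equation for $\tau_1$. Under condition $\dagger)$ the inequalities $\lambda^{\ast}_{1,a}<0$ and $\lambda^{\ast}_{1,a}+\lambda^{\ast}_{2,a}<0$ (valid for small $a\ge 0$ by continuity) guarantee that $\mathcal{H}(y)^2=\mathcal{Q}_0(y)^2+\mathcal{Q}_1(y)^2>0$, so the polar representation of $(\mathcal{Q}_0,\mathcal{Q}_1)$ via the continuous phase $\varphi_1(y)$ is legitimate. Writing the right-hand side of \eqref{ecuacion para el primer delay} as $-2Rp\,\mathcal{H}(y)\cos(\tau_1 y+\varphi_1(y))$, the equation is solvable in $\tau_1$ if and only if \eqref{fundamental inequality for delay one} holds; this defines the admissible set $I$. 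Inverting the cosine then gives the two branches $\tau_{1,s}^{\pm}(\hat y)$ of \eqref{valor de tau1}, where the $\pm$ accounts for the two preimages of the arccosine and $s\in\mathbb{Z}$ for its $2\pi$-periodicity.

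The final step is to recover $\tau_2$ on each branch. Once $\tau_1$ is fixed as above, the second equation of \eqref{newsystem for lambda full} (with $x=0$) determines $\sin(\tau_2\hat y)$ and, together with the first equation, $\cos(\tau_2\hat y)$, yielding the tangent formula from which \eqref{valor de tau2} is obtained by inverting $\arctan$ modulo $\pi$. Assembling the pairs $(\tau_{1,s}^{\pm}(\hat y),\tau_{2,k}^{\pm}(\hat y))$ for $\hat y\in I$ and $s,k\in\mathbb{Z}$ produces the family $\mathcal{C}_{s,k}$, and by construction every purely imaginary root of $\mathcal{P}_a$ corresponds to a point on one of these curves. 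The only delicate point is the legitimacy of the polar change of coordinates and of the tangent inversion, both of which are secured by the sign information provided by $\dagger)$; once these are in place, the argument reduces to the elementary algebraic manipulations already displayed in equations \eqref{stability switching curves}--\eqref{valor de tau2}.
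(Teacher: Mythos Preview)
Your proposal is correct and follows essentially the same approach as the paper: the paper's proof consists precisely of the computations displayed in the text preceding the theorem statement (the derivation from \eqref{newsystem for lambda full} through \eqref{valor de tau2}), and you have faithfully reconstructed that chain of reasoning---substitute $\lambda=iy$, eliminate $\tau_2$ by squaring and adding, use the polar representation guaranteed by $\mathcal H(y)>0$ under condition~$\dagger)$, invert the cosine to obtain $\tau_{1,s}^{\pm}$, and then recover $\tau_{2,k}^{\pm}$ from the tangent formula. Your added remarks on why $y=0$ is excluded and why the polar/tangent inversions are legitimate are useful clarifications that the paper leaves implicit.
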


\section{Continuation}\label{continuacion}

In this 
section, 
we shall prove the existence of $\omega$-periodic solutions for small perturbations of the autonomous undelayed system. 
Keeping in mind Theorem \ref{thm-impl} consider any equilibrium point $(\hat{T},\hat{E})$  of (\ref{main eq 3}) for $\varphi=0$ and $\tau_1=\tau_2=0$  and compute the
partial derivative of $\Phi$ at $(\hat{T},\hat{E},0)$. 
which is given by 
$$D_{(T,E)}\Phi(\hat{T},\hat{E},0)Z=  \lim_{h \to 0}\frac{\Phi((\hat{T},\hat{E}+hZ,0) -\Phi((\hat{T},\hat{E},0)}{h}, 
$$
that is, 
$$D_{(T,E)}\Phi(T^*,E^*,0)Z= MZ,
$$
where $M$ is the matrix given by
\[
M:=\begin{pmatrix}
f(\hat{T})-\hat{T}f^{\prime}(\hat{T})-\gamma \hat{E} & -\gamma \hat{T} \\
(p-m)\hat{E}h^{\prime}(\hat{T})& (p-m)h(\hat{T})-\eta
\end{pmatrix}.
\]
Let us first assume that $(\hat{T},\hat{E})$ is the tumor-free equilibrium point, that is, $\displaystyle{(\hat{T},\hat{E})=(0,\sigma/\eta)}$. Then 
\[
M_{|_{(0,\sigma/\eta)}}:=\begin{pmatrix}
rb-\gamma \sigma/\eta & 0\\
\sigma(p-m) h^{\prime}(0)/\eta & -\eta
\end{pmatrix}.
\]

To conclude our analysis, 
let us proceed with an strictly positive equilibrium $(\hat{T},\hat{E})$ given as a solution of the system \eqref{nontrivial equilibrium systems}, that is,
\[
f(\hat{T})=\gamma \hat{E}, \quad \text{and} \quad \sigma=\hat{E}\big(\eta+(m-p)h(\hat{T})\big),
\]
with $\hat{T},\hat{E}>0$. In particular, for $a=0$, from Proposition \ref{ecuacion para T unico} we deduce that $\hat{T}=T_{\ast}$ under some appropriate conditions. In addition, the corresponding matrix is 
\[
M_{|_{(\hat{T},\hat{E})}}:=\begin{pmatrix}
\hat{T}f^{\prime}(\hat{T}) & -\gamma \hat{T}\\
(p-m)\hat{E}h^{\prime}(\hat{T}) & -\sigma/\hat{E}
\end{pmatrix}
\]
Thus, taking into account Lemma \ref{iso}, existence of $\omega$-periodic solution  for all values of $\omega$ is deduced in both cases. 
\begin{theorem}  
Let $(T^*,E^*)$ be an admissible equilibrium of (\ref{main eq 3}) with 
$b\equiv b_0$. 
Then there exist
constants $\varphi^*,\tau^*_j>0$ and $r^*>0$ such that the
system (\ref{main eq 3}) has at least one $\omega$-periodic 
solution $(T,E)$ with $\|(T-T^*,E-E^*)\|_\infty <r^*$, provided that
$\|\varphi\|_\infty< \varphi^*, \tau_1<\tau_1^*$ and $\tau_2<\tau_2^*$. 
\end{theorem}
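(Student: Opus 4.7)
The plan is to recognize the statement as a direct application of Theorem \ref{thm-impl}, with perturbation parameter $y=(\varphi,\tau_1,\tau_2)$ in the Banach space $Y:=C_\omega\times\R^2$. To set this up, I would write $b(t)=b_0+\varphi(t)$ with $\varphi\in C_\omega$, introduce the continuous shift operators $X\mapsto X_{\tau_j}$ on $C_\omega$, and package the right-hand side of (\ref{main eq 3}) as $\Phi(X,\varphi,\tau_1,\tau_2)$. Because $f$, $h$, and the shift operators depend smoothly on their arguments, $\Phi$ is continuously Fr\'echet differentiable on a suitable open subset of $C_\omega\times Y$. At $y=0$ the system is autonomous and undelayed, so the constant function $X_0\equiv(T^*,E^*)$ is a trivial $\omega$-periodic solution, i.e.\ $\mathcal F(X_0,0)=0$.

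Next, I would compute the partial Fr\'echet derivative $D_X\Phi(X_0,0)$, which is nothing but multiplication by the constant Jacobian matrix $M$ already displayed before the theorem (the tumor-free expression when $(T^*,E^*)=(0,\sigma/\eta)$, or the general expression otherwise). In this way Theorem \ref{thm-impl} reduces the problem to verifying that the linear operator $L:C_\omega^1\to C_\omega$, $LZ:=Z'-MZ$, has trivial kernel.

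The key technical step is the spectral check provided by Lemma \ref{iso}: I must show that $\pm\frac{2k\pi}{\omega}i$ is not an eigenvalue of $M$ for any $k\in\mathbb N_0$. In the tumor-free case $M$ is lower triangular with eigenvalues $rb_0-\gamma\sigma/\eta=-\Delta/\eta$ and $-\eta$; both are real, and the admissibility (non-degeneracy $\Delta\neq 0$) already used in the stability analysis forces them to be nonzero, ruling out any resonance. In the non-trivial case I would compute
\[
\operatorname{tr} M=\hat Tf'(\hat T)-\frac{\sigma}{\hat E},\qquad \det M=-\frac{\sigma\hat Tf'(\hat T)}{\hat E}+\gamma(p-m)\hat T\hat E\,h'(\hat T),
\]
and invoke the sign analysis of $\mathscr D_{\ast,0}$ carried out in Section \ref{MainResults}: purely imaginary eigenvalues require $\operatorname{tr} M=0$ and $\det M>0$, which is incompatible with the stability regime, while the resonance $\det M=(2k\pi/\omega)^2$ is excluded for generic $\omega$ (interpreted as part of admissibility). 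This is the one non-routine step; everything else is bookkeeping.

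Once $\ker L=\{0\}$ is established, Theorem \ref{thm-impl} delivers open sets $\mathcal V\subset Y$ and $\mathcal W\subset C_\omega$ with $0\in\mathcal V$, $X_0\in\mathcal W$, together with a unique smooth branch $y\mapsto X(y)\in\mathcal W$ of $\omega$-periodic solutions of (\ref{main eq 3}) with $X(0)=X_0$. Choosing $r^*>0$ so that the $\|\cdot\|_\infty$-ball of radius $r^*$ about $X_0$ is contained in $\mathcal W$, and $\varphi^*,\tau_j^*>0$ so that $\{(\varphi,\tau_1,\tau_2):\|\varphi\|_\infty<\varphi^*,\ \tau_j<\tau_j^*\}\subset\mathcal V$, produces exactly the constants claimed. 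The main obstacle, as indicated above, lies entirely in confirming the absence of $\frac{2\pi}{\omega}i\mathbb Z$-resonance in the spectrum of $M$; the rest is a direct invocation of the abstract machinery assembled in Section \ref{secTB}.
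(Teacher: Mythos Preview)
Your overall strategy is exactly the paper's: cast the right-hand side of (\ref{main eq 3}) as $\Phi(X,\varphi,\tau)$, apply Theorem~\ref{thm-impl} at $y=0$ to the constant solution $X_0\equiv(T^*,E^*)$, and reduce via Lemma~\ref{iso} to showing that the Jacobian $M$ has no eigenvalue of the form $\tfrac{2k\pi}{\omega}i$. The tumor-free case is handled the same way in both arguments.

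The difference is in the positive-equilibrium case, and here your argument has a gap. You write down $\operatorname{tr}M=\hat Tf'(\hat T)-\sigma/\hat E$ but, instead of evaluating it, you appeal to ``the stability regime'' of Section~\ref{MainResults} to rule out $\operatorname{tr}M=0$, and then add a further clause that ``the resonance $\det M=(2k\pi/\omega)^2$ is excluded for generic $\omega$''. Two problems: (i) the stability results of Section~\ref{MainResults} are proved under extra hypotheses (conditions $\dagger$, $\ddagger$, smallness of $a$, etc.), whereas the present theorem is stated for \emph{any} admissible equilibrium, so the appeal does not cover all cases; (ii) the genericity clause is both unnecessary and would weaken the conclusion---once $\operatorname{tr}M\ne0$, a real $2\times2$ matrix cannot have a nonzero purely imaginary eigenvalue at all, so the resonance is excluded for every $\omega$ simultaneously. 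The paper bypasses all of this by simply substituting $f(T)=r(b-T^\beta)$ to obtain
\[
\operatorname{tr}M \;=\; \hat T f'(\hat T)-\frac{\sigma}{\hat E}\;=\;-r\beta\,(\hat T)^{\beta}-\frac{\sigma}{\hat E}\;<\;0,
\]
which holds for every strictly positive equilibrium without any stability assumption. This one-line computation replaces your detour through $\mathscr D_{\ast,0}$ and generic~$\omega$.
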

\begin{proof}
From the previous calculations, when $T^*=0$, it is verified
that the matrix $M_{|_{(0,\sigma/\eta)}}$ has only real
eigenvalues given by
\[
\lambda_{1}=rb-\gamma \sigma/\eta, \quad \text{and} \quad \lambda_2=-\eta.
\]
On the other hand, for $T^*>0$   
it is seen that the trace of the matrix $M_{|_{(T^{\ast},E^{\ast})}}$ is strictly negative, indeed,
\[
tr ( M_{|_{(T^{\ast},E^{\ast})}})=-r\beta [T^{\ast}]^{\beta}-\sigma/E^{\ast}<0,
\]
which, in particular,  implies that it has no
pure imaginary eigenvalues. Thus, in both cases, the conclusion follows from lemma \ref{iso}.  
\end{proof}

\section{Numerical Validation}\label{NV}
Section in construction.

\section*{Appendix A}
In Section \ref{section Basic results} for each $\beta \in \, ]0,1]$ we consider the function
\[
h_{\mu}(T):=S(T,\mu,0)=\mu(T^{\beta}-b)T+T^{\beta}, 
\]
with $\displaystyle{0\leq T<b^{1/\beta}}$ and $\mu \in \mathbb{R}$. This function provides the first component of non-trivial and admissible  equilibrium points $(T, E)$ (with $T, E>0$) of the system \eqref{Main EQ aut} as solutions of the algebraic equation 
\[
h_{\mu}(T)=h_{0}. \qquad (\blacktriangle)
\]
with $h_{0}\in \mathbb{R}$. In the following lines, we present mathematically and numerically some algebraic properties of $h_{\mu}$ which are important to study the number of solutions of $(\blacktriangle)$. Firstly, notice that
\[
\begin{split}
h_{\mu}^{\prime}(T)&=T^{\beta-1}(\beta+\mu(1+\beta)T)-\mu b,\\
h_{\mu}^{\prime \prime}(T)&=\beta T^{\beta-2}(\beta-1+\mu(1+\beta)T),\\
h_{\mu}^{\prime \prime \prime}(T)&=\beta (\beta-1)T^{\beta-3}(\beta-2+\mu(1+\beta)T).
\end{split}
\]
In addition,
\[
h_{\mu}(b^{1/\beta})=b, \quad  \quad \text{and}
\quad \quad  h_{\mu}^{\prime}(b^{1/\beta})=\beta b\big(b^{-1/\beta}+\mu\big).
\]
Let us start our analysis by assuming the case $\mu<0$. Straightforward computations show the following:
\begin{itemize}
\item[$\dagger)$] $h_{\mu}^{\prime \prime}(T)<0$ for all $T>0$. Then, $h_{\mu}(T)$ is a smooth concave function, and satisfies
\[
\lim_{T\to 0^{+}}h^{\prime}_{\mu}(T)=\begin{cases}
0, \quad \text{if} \quad \beta=1,\\
\infty, \quad \text{if} \quad 0<\beta<1,
\end{cases} \quad \text{and} \quad \lim_{T\to \infty}h^{\prime}_{\mu}(T)=-\infty.
\]
In consequence, there exists exactly one (positive) critical point $T_{\Delta}$ given implicitly by the equation
\begin{equation}\label{ecuacion pc}
T_{\Delta}^{\beta-1}(\beta+\mu(1+\beta)T_{\Delta})=\mu b.
\end{equation}
\item[$\dagger)$] $h_{\mu}(T)>0$ for all $0<T<b^{1/\beta}$. Furthermore, from \eqref{ecuacion pc} we deduce that
\[
h_{\mu}(T_{\Delta})=T_{\Delta}^{\beta}\big( 1-\beta-\mu \beta T_{\Delta}\big).
\]
\end{itemize}
Finally, it is clear that $\displaystyle{b^{1/\beta}\lesseqgtr T_{\Delta}}$ if $\displaystyle{b^{-1/\beta}\gtreqless -\mu}$. 
The following Lemma is easily deduced from the properties of $h_{\mu}(T)$.

\begin{lemma} \label{lema function h for mu negativo} Let us assume $\mu<0$. Then
 \begin{itemize}
 \item [1.] If $h_{0}<0$, then there are no solutions of $(\blacktriangle)$ in $[0,b^{1/\beta}]$.
     \item [2.] 
     \begin{itemize}
 \item[a.] If $0\leq  h_{0}\leq b$, then there is exactly one solution of $(\blacktriangle)$ in $[0,b^{1/\beta}]$. 
 \item[b.] If $b<h_{0}$ and satisfies
     \[
     h_{0}\leq T_{\Delta}^{\beta}\big( 1-\beta-\mu \beta T_{\Delta}\big),
     \]
     with $T_{\Delta}$ the only positive solution of \eqref{ecuacion pc}. Then, there are at most two solutions of $(\blacktriangle)$ in $[0,b^{1/\beta}]$ if $b^{1/\beta} \geq T_{\Delta}$ and there are no solutions in $[0,b^{1/\beta}]$  if  $b^{1/\beta} <T_{\Delta}$.
     \end{itemize}
 \end{itemize}
\end{lemma}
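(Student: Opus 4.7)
The plan is to leverage the structural properties of $h_\mu$ that have just been catalogued: strict concavity on $(0,\infty)$, boundary values $h_\mu(0)=0$ and $h_\mu(b^{1/\beta})=b$, strict positivity on $(0,b^{1/\beta})$, and a single critical point $T_\Delta>0$ which, by concavity, is a global maximum with value $h_\mu(T_\Delta)=T_\Delta^\beta(1-\beta-\mu\beta T_\Delta)$. Strict concavity forces every horizontal line to meet the graph in at most two points, so the counting problem reduces to (i) locating $h_\mu(T_\Delta)$ relative to $h_0$ and (ii) checking whether the critical point sits inside or outside the interval of interest via the criterion $T_\Delta \lesseqgtr b^{1/\beta} \iff -\mu \gtreqless b^{-1/\beta}$, already noted in the preceding discussion.

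For case 1, I would simply invoke $h_\mu \geq 0$ on $[0,b^{1/\beta}]$ to rule out all solutions when $h_0<0$. Case 2a then splits by the relative position of $T_\Delta$. If $T_\Delta \geq b^{1/\beta}$, then $h_\mu$ is strictly increasing on $[0,b^{1/\beta}]$ and hence a continuous bijection onto $[0,b]$, so the equation has exactly one root for every $h_0 \in [0,b]$. If $T_\Delta < b^{1/\beta}$, then $h_\mu$ rises from $0$ to $h_\mu(T_\Delta)>b$ on $[0,T_\Delta]$ and descends from $h_\mu(T_\Delta)$ back down to $b$ on $[T_\Delta,b^{1/\beta}]$; for $h_0<b$ the intermediate value theorem yields exactly one solution on $[0,T_\Delta)$ and no solution on $(T_\Delta,b^{1/\beta}]$ (where $h_\mu$ stays strictly above $b$). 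Case 2b is handled in the same spirit: when $b^{1/\beta}<T_\Delta$ the function is monotone increasing on the interval and maxes out at $b<h_0$, so there are no solutions; when $b^{1/\beta}\geq T_\Delta$ the hypothesis $h_0 \leq h_\mu(T_\Delta)$ allows one solution on the ascending branch $[0,T_\Delta]$ and at most one on the descending branch $[T_\Delta,b^{1/\beta}]$, totalling at most two.

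The main obstacle is not any deep analytic step but rather the careful bookkeeping of boundary cases and the discipline of always tracking $T_\Delta$ relative to $b^{1/\beta}$. In particular, one must be attentive to the edge equalities $h_0=0$, $h_0=b$, and $h_0=h_\mu(T_\Delta)$, where a solution can coalesce with an endpoint or the critical point, and to the fact that case 2b says "at most two" rather than "exactly two" precisely because the descending branch may fail to reach $h_0$ when $h_0$ is close to $h_\mu(T_\Delta)$. Once this case split is laid out cleanly, each subcase is settled by the intermediate value theorem together with strict monotonicity of $h_\mu$ on each side of $T_\Delta$.
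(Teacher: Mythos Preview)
Your proposal is correct and follows exactly the route the paper intends: the paper merely states that the lemma ``is easily deduced from the properties of $h_{\mu}(T)$'' established immediately before (strict concavity, positivity on $(0,b^{1/\beta})$, the unique critical point $T_\Delta$ with value $h_\mu(T_\Delta)=T_\Delta^\beta(1-\beta-\mu\beta T_\Delta)$, and the boundary values $h_\mu(0)=0$, $h_\mu(b^{1/\beta})=b$), and you have filled in precisely those IVT/monotonicity details. Your case split on the position of $T_\Delta$ relative to $b^{1/\beta}$ is the same one the paper records in the remark right after the lemma, so there is no methodological difference.
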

\begin{remark}
Since for all $\mu<0$, 
the function $h$ is concave, namely $h^{\prime \prime}_{\mu}(T)<0$ for all $T>0$, it follows that
\[
\mu\leq -b^{-1/\beta} \quad \Leftrightarrow \quad b^{1/\beta}\geq T_{\Delta}, \quad \text{and} \quad  -b^{-1/\beta}<\mu<0 \quad \Leftrightarrow \quad b^{1/\beta}< T_{\Delta}.
\]
\end{remark}

\begin{figure}[t]
\centering
\begin{overpic}[width = 0.55\textwidth, tics = 6]{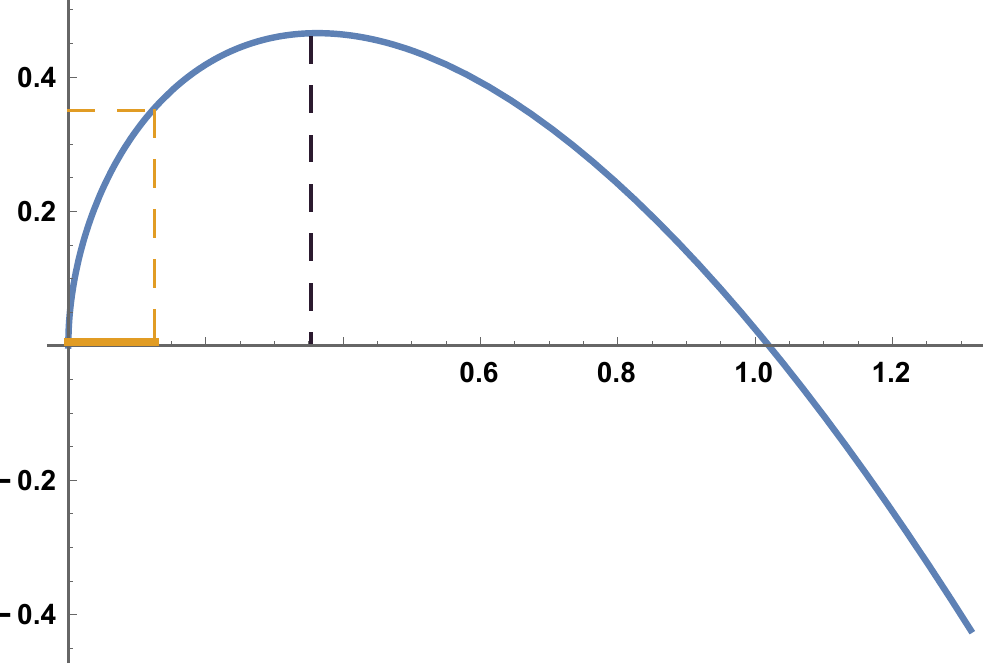}
\put(95.4, 10.5){$h_{\mu}(T)$}
\put(99.5, 27.5){$T$}
\put(29.5, 27.5){$T_{\Delta}$}
\put(13.5, 27.5){$b^{1/\beta}$}
\put(3.5, 54.5){$b$}
\end{overpic}
\caption{Graph of $h_{\mu}(T)$ with parameters $\mu=-1.5$, $b=0.35$, and $\beta=0.5$. For these values, $h_{\mu}^{\prime}(b^{1/\beta}) \approx 1.17$, therefore $b^{1/\beta}<T_{\Delta}$. If $0<h_{0}\leq b$ there exists exactly one solution of $(\blacktriangle)$ on $]0,b^{1/\beta}]$ and no solutions if $h_{0}>b.$}
\label{fig:Fig h(T)-mu-negativa}
\end{figure}

Next, we proceed with the case $\mu>0$. Firstly, note that
\[
\lim_{T\to 0^{+}}h^{\prime}_{\mu}(T)=\begin{cases}
0, \quad \text{if} \quad \beta=1,\\
\infty, \quad \text{if} \quad 0<\beta<1,
\end{cases}  \quad \text{and} \quad \lim_{T\to \infty}h^{\prime}_{\mu}(T)=\infty.
\]
In addition, $h_{\mu}^{\prime}(T)$ admits exactly one positive critical point $T_{\star}$. Indeed,
\[
h_{\mu}^{\prime \prime}(T_{\star})=0 \quad \Leftrightarrow \quad \beta-1+\mu(1+\beta)T_{\star}=0, \quad \Leftrightarrow \quad T_{\star}=\frac{1-\beta}{\mu(1+\beta)}.
\]
Moreover,
\[
h_{\mu}^{\prime \prime \prime}(T_{\star})=\beta(1-\beta) T_{\star}^{\beta-3}>0, \quad \text{and} \quad h_{\mu}^{\prime}(T_{\star})=\Big(\frac{1-\beta}{\mu (1+\beta)}\Big)^{\beta-1}-\mu b.
\]
The last left-hand inequality implies that $T_{\star}$ is a global minimum of $h_{\mu}^{\prime}(T)$ on $\left\{T\in \mathbb{R}:T\geq 0\right\}$. 
These computations provide the elements for the proof of the following result

\begin{figure}[h]
\centering
\begin{overpic}[width = 0.55\textwidth, tics = 6]{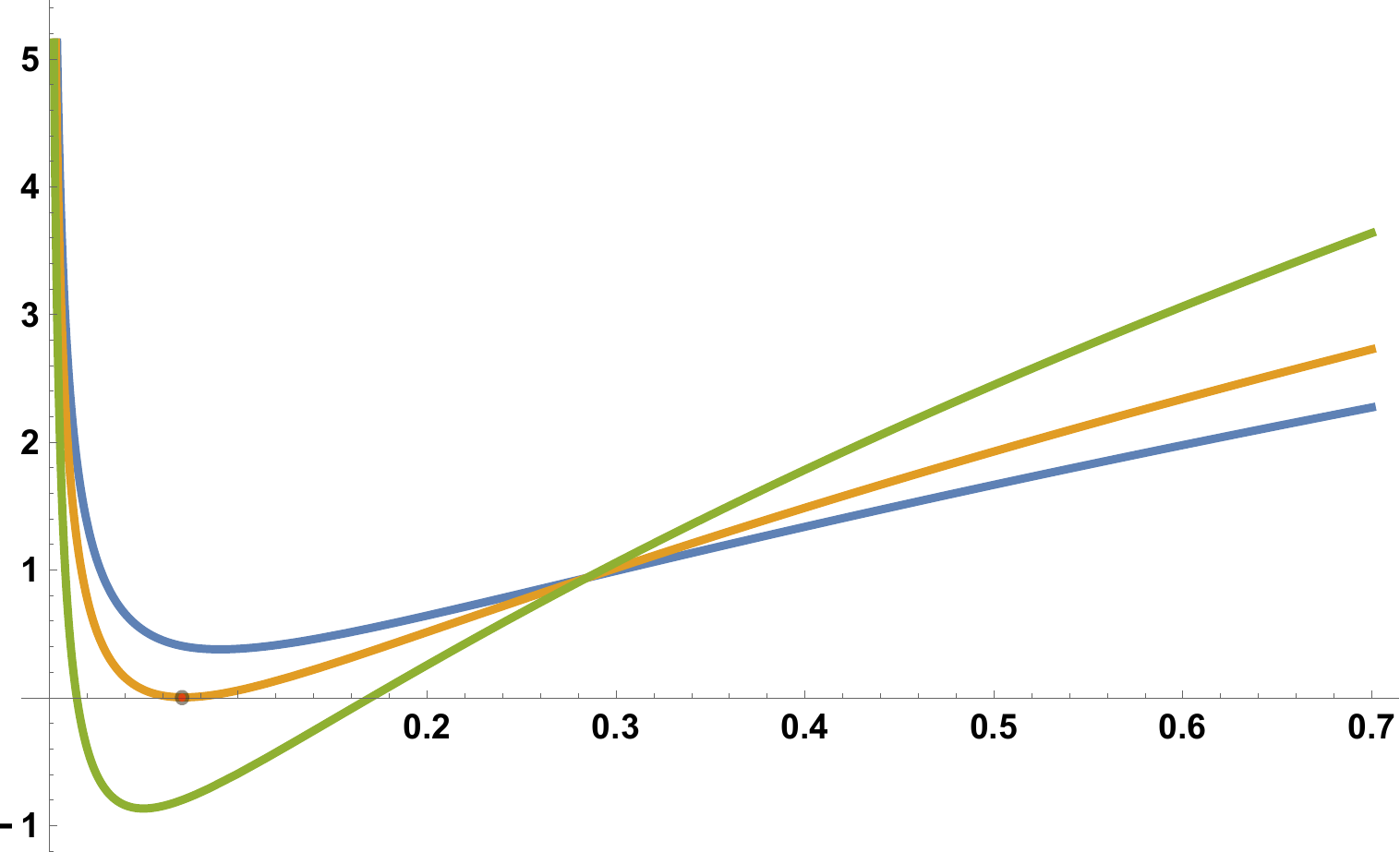}
\put(101.4, 7.5){$T$}
\put(99.5, 28.5){\small{$h^{\prime}_{\mu_{0}}(T)$}}
\put(99.5, 35.7){\small{$h^{\prime}_{\mu_{1}}(T)$}}
\put(99.5, 44.5){\small{$h^{\prime}_{\mu_{2}}(T)$}}
\put(8.5, 7.5){\tiny{$T_{\star}(\mu_{1})$}}
\end{overpic}
\caption{Graph of $h^{\prime}_{\mu}(T)$ with parameters $b=0.8$, and $\beta=0.5$ for $T\in ]0,0.7]$. Three different values of $\mu$ are considered: $\mu_{n}=3.68+n$, $n \in \left\{0,1,2\right\}.$ Notice that $\mu_{1}\approx \mu_{c}=75/16$. For $\mu<\mu_c$, $h^{\prime}_{\mu}(T)>0$ for all $T>0$. Instead, if $\mu>\mu_c$, $h^{\prime}_{\mu}(T)$ have exactly two positive zeros.}
\label{fig:Fig h(T)-mu-positiva-derivada}
\end{figure}

\begin{lemma} \label{lema function h for mu positive 1}
Assume that
\[
0<\mu\leq \mu_{c}:=\Big(\frac{1}{b}\Big(\frac{1-\beta}{1+\beta}\Big)^{\beta-1}\Big)^{\frac{1}{\beta}}.
\]
Then, 
\begin{itemize}
    \item[a.] If $h_{0}<0 $, there are no solutions of $(\blacktriangle)$ for all $T\geq 0$.
    \item[b.]  If $0\leq h_{0}$ there exist exactly one solution of $(\blacktriangle)$ on the set $T\geq 0$. In particular, if $h_0\leq b $ the solution lying on $[0,b^{1/\beta}].$
\end{itemize}
\end{lemma}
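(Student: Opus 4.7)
The plan is to exploit the monotonicity of $h_\mu$ on $[0,\infty)$ under the hypothesis $\mu \leq \mu_c$: once we show that $h_\mu:[0,\infty)\to[0,\infty)$ is a continuous, strictly increasing bijection starting from $h_\mu(0)=0$, both parts of the lemma follow at once by an elementary counting argument.

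The first step is to re-read the computations preceding the lemma. For $0<\beta<1$ and $\mu>0$, we have $h_\mu'(T)\to+\infty$ as $T\to 0^+$ and as $T\to\infty$, while $h_\mu''$ vanishes only at $T_\star=(1-\beta)/[\mu(1+\beta)]$, where $h_\mu'''(T_\star)>0$. Hence $T_\star$ is the unique global minimum of $h_\mu'$ on $(0,\infty)$, with value
\[
h_\mu'(T_\star)=\Bigl(\tfrac{1-\beta}{\mu(1+\beta)}\Bigr)^{\beta-1}-\mu b.
\]
The case $\beta=1$ is even simpler: $h_\mu'$ is linear with $h_\mu'(0)=1-\mu b$ and strictly increasing thereafter.

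The second step is a direct algebraic manipulation: rearranging $h_\mu'(T_\star)\geq 0$ (resp.\ $1-\mu b\geq 0$ in the case $\beta=1$) yields precisely $\mu^\beta\leq (1/b)\bigl((1-\beta)/(1+\beta)\bigr)^{\beta-1}$, i.e., $\mu\leq\mu_c$. Consequently, under the standing hypothesis $\mu\leq\mu_c$, we get $h_\mu'(T)\geq 0$ on $[0,\infty)$, with equality at most at the isolated point $T_\star$ (when $\mu=\mu_c$). This implies that $h_\mu$ is strictly increasing on $[0,\infty)$. Combined with $h_\mu(0)=0$ and the asymptotics $h_\mu(T)=\mu T^{\beta+1}(1+o(1))\to+\infty$ as $T\to\infty$, the Intermediate Value Theorem yields that $h_\mu:[0,\infty)\to[0,\infty)$ is a continuous bijection.

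With this in hand, the two parts are immediate. For (a), any $T\geq 0$ satisfies $h_\mu(T)\geq 0>h_0$, so $(\blacktriangle)$ has no solution. For (b), the bijectivity provides exactly one $T\geq 0$ solving $h_\mu(T)=h_0$; since $h_\mu(b^{1/\beta})=b$, strict monotonicity locates this solution in $[0,b^{1/\beta}]$ whenever $h_0\in[0,b]$. No real obstacle is expected; the only delicate check is the algebraic equivalence in the second step, and in particular the degenerate case $\beta=1$, where $T_\star$ collapses to $0$ but the conclusion $h_\mu'\geq 0$ on $[0,\infty)$ remains valid under $\mu\leq 1/b=\mu_c$.
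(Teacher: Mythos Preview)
Your argument is correct and follows essentially the same route as the paper's proof: both show that under $\mu\le\mu_c$ the global minimum of $h_\mu'$ at $T_\star$ is nonnegative, whence $h_\mu$ is monotone increasing on $[0,\infty)$ with $h_\mu(0)=0$ and $h_\mu(b^{1/\beta})=b$, from which parts (a) and (b) follow. Your version is slightly more explicit about strict monotonicity, the asymptotic $h_\mu(T)\to\infty$, and the borderline case $\beta=1$, but these are refinements of the same core idea rather than a different approach.
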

\begin{proof}
For each $\mu>0$, the previous computations shows that $h_{\mu}^{\prime}(T)$ has a global minimum at $\displaystyle{T=T_{\star}=\frac{1-\beta}{\mu(1+\beta)}}$. Furthermore,
\[
h_{\mu}^{\prime}(T_{\star})=\Big(\frac{1-\beta}{\mu (1+\beta)}\Big)^{\beta-1}-\mu b \geq 0 \quad \text{if and only if} \quad \mu\leq \Big(\frac{1}{b}\Big(\frac{1-\beta}{1+\beta}\Big)^{\beta-1}\Big)^{\frac{1}{\beta}},
\]
hence, $h_{\mu}(T)$ is a monotone-increasing continuous function. Since $h_{\mu}(0)=0,$  we have that $h_{\mu}(T)$ is non-negative, proving already statement a. and the first part of statement b.  Finally, since $h_{\mu}([0,b^{1/\beta}])\subseteq [0,b]$ clearly the solution of $(\blacktriangle)$ for $0\leq h_0\leq b$ lying on $[0,b^{1/\beta}]$.
\end{proof}

Let us consider the function $\displaystyle{l: ]0,\infty[\to \mathbb{R}}$, $\displaystyle{\mu \to l(\mu):=\min_{T>0} h_{\mu}^{\prime}(T)}$. Since $\displaystyle{T_{\star}=T_{\star}(\mu)}$ is the global minimum of $h^{\prime}_{\mu}(T)$ on  $\left\{T\in \mathbb{R}:T\geq 0\right\}$, it follows
\[
\begin{split}
l(\mu)=h_{\mu}^{\prime}(T_{\star}(\mu))&=T^{\beta-1}_{\star}(\mu)-\mu b=\mu^{1-\beta}\Big(\frac{1-\beta}{1+\beta}\Big)^{\beta-1}-\mu b,\\
&=\mu b\Big(\Big(\frac{\mu_c}{\mu}\Big)^{\beta}-1\Big).
\end{split}
\]
In consequence, $\displaystyle{l(\mu)}$ is monotone decreasing for all $\mu>0$ satisfying
\[
l(\mu_{c})=0, \quad \text{and} \quad  \lim_{\mu \to \infty}l(\mu)=-\infty.
\]
It should be pointed out that $h_{\mu}^{\prime \prime}(T)$ has only one sign change (precisely at $T_{\star}$). In fact,
\[
h_{\mu}^{\prime \prime \prime}(T_{\star})>0, \quad  \lim_{T \to 0}h_{\mu}^{\prime \prime}(T)=-\infty, \quad \text{and} \quad \lim_{T \to \infty}h_{\mu}^{\prime \prime}(T(\mu))=0.
\]

\begin{figure}[h]
\centering
\begin{overpic}[width = 0.95\textwidth, tics = 6]{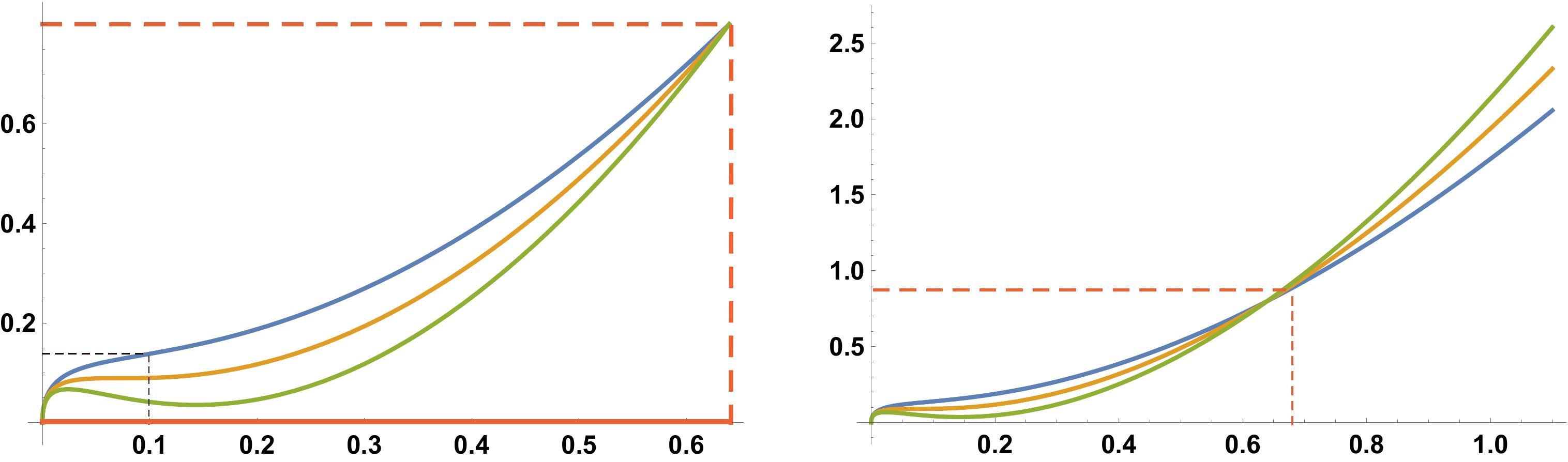}
\put(100.4, 0.5){$T$}
\put(46.5, 0.5){\small{$b^{1/\beta}$}}
\put(99.5, 28.5){\small{$h_{\mu_{2}}(T)$}}
\put(99.5, 24.7){\small{$h_{\mu_{1}}(T)$}}
\put(99.5, 21.5){\small{$h_{\mu_{0}}(T)$}}
\put(1, 27.5){\small{$b$}}
\put(-0.4, 6.5){\tiny{$h_{0}$}}
\put(6.8, 7.9){\tiny{$h_{\mu_{0}}(T)$}}
\put(13.5, 5.5){\tiny{$h_{\mu_{1}}(T)$}}
\put(20.5, 3.5){\tiny{$h_{\mu_{2}}(T)$}}
\put(81.5, 0.1){\tiny{$b^{1/\beta}$}}
\put(54, 10.1){\tiny{$b$}}
\end{overpic}
\caption{Graph of $h_{\mu}(T)$ with parameters $b=0.8$, and $\beta=0.5$. (Left) $T\in \, ]0,b^{1/\beta}]$. (Right) $T\in \,]0,1.2]$. Three different values of $\mu$ are considered: $\mu_{n}=3.68+n$, $n \in \left\{0,1,2\right\}.$ Notice that $\mu_{1}\approx \mu_{c}=75/16$. For $\mu\leq \mu_c$, there is exactly one solution of ($\blacktriangle$) if $0 \leq h_{0}\leq b$ lying on $[0,b^{1/\beta}]$.}
\label{fig:Fig h(T)-mu-positiva}
\end{figure}

Combining the previous results it is deduced that $\displaystyle{h_{\mu}^{\prime}(T)}$ have exactly two zeros, which shall be called  $T_{L}=T_{L}(\mu)$ and $T_{R}=T_{R}(\mu)$ for all $\mu_{c}< \mu$. Furthermore, \footnote{The inequalities \eqref{inequalities for h-derivada} follow as result of all mentioned properties of $h_{\mu}^{\prime}(T)$ and $h_{\mu}^{\prime \prime}(T)$.} 
\begin{equation*}\label{inequalities for h-derivada}
0<T_{L}<T_{\ast}<T_{R}, \quad \text{and} \quad h_{\mu}^{\prime \prime}(T_{L})<0<h_{\mu}^{\prime \prime}(T_R).
\end{equation*}

Now, it is important to highlight the behavior of the zeros of $h_{\mu}^{\prime}(T)$ for $\mu>\mu_{c}$. Firstly, notice that
\[
h_{\mu}^{\prime}(T_{R})=0 \quad \Leftrightarrow \quad \mu\big((1+\beta)T_{R}^{\beta}-b\big)+\beta T_{R}^{\beta-1}=0 \quad \Rightarrow \quad T^{\beta}_{R}<\frac{b}{1+\beta}<b, 
\]
implying that $\displaystyle{0<T_{L}<T_{\star}<T_{R}<b^{1/\beta}}.$ 

Next, define the function 
\begin{align*}
    H_{L}: [\mu_{c},\infty[ &\to \mathbb{R}\\
    \mu &\to h_{\mu}(T_{L}(\mu)),
\end{align*}

which provides the value of $h_{\mu}(T)$ over the critical point $T_{L}$ for all $\mu_{c}\leq \mu$. A direct computation proves that
\[
T_{L}(\mu_{c})=T_{\star}(\mu_{c})=\Big(b\Big(\frac{1-\beta}{1+\beta}\Big)\Big)^{1/\beta}, \quad \text{and} \quad T^{\beta}_{\star}(\mu_c)-\mu_{c}b\, T_{\star}(\mu_{c})=0.
\]
From here, we deduce
\begin{align*}
H(\mu_{c})&=\mu_{c} T^{1+\beta}_{\star}(\mu_c)=b\Big(\frac{1-\beta}{1+\beta}\Big)^{1+\beta}<b,
\quad \text{and} \quad H^{\prime}(\mu)=\big(T^{\beta}_{L}(\mu)-b\big)T_{L}(\mu)<0.
\end{align*}
In consequence, $H_{L}(\mu)<b$ for all $\mu_{c}<\mu.$ Likewise, denoting $H_{R}(\mu):=h_{\mu}(T_{R}(\mu))$, by the monotonicity properties of $h_{\mu}^{\prime \prime}(T)$, it follows that $H_{R}(\mu)<b$, for all $\mu_{c}<\mu$.

\vspace{0.3 cm}
To sum up, for all $b,\beta \in [0,1[$ fixed, the function $\displaystyle{h_{\mu}(T)=\mu(T^{\beta}-b)T+T^{\beta}}$ satisfies:
\begin{itemize}
    \item[$\triangleright$] $h^{\prime}_{\mu}(T)$ has exactly two zeros $T_{L}(\mu)$, $T_{R}(\mu)$ such that 
    \[
    0<T_{L}(\mu)<T_{\star}(\mu)=\frac{1-\beta}{\mu(1+\beta)}<T_{R}(\mu)<b^{1/\beta}.
    \]
    \item [$\triangleright$] $h_{\mu}^{\prime \prime}(T)$ has exactly a simple zero at $T_{\star}(\mu)$ and
    \[
    h_{\mu}^{\prime \prime}(T_{L})<0<h_{\mu}^{\prime \prime}(T_R), 
    \]
    \item[$\triangleright$] If $\displaystyle{H_{R}(\mu):=h_{\mu}(T_{R}(\mu))}$ and $\displaystyle{H_{L}(\mu):=h_{\mu}(T_{L}(\mu))}$, then $\displaystyle{H_R(\mu)<H_{L}(\mu)}$,
\end{itemize}
for all $\displaystyle{\mu>\Big(\frac{1}{b}\Big(\frac{1-\beta}{1+\beta}\Big)^{\beta-1}\Big)^{\frac{1}{\beta}}}$.

\noindent
\subsection*{Bifurcation point.} The next lines are devoted to show the existence of $\hat{\mu}>\mu_{c}$ and $\hat{T}>0$ such that $h_{\mu}(T)$ has a saddle-node bifurcation point at $(\hat{T},\hat{\mu})$. Moreover,
\[
T_{R}(\mu) \to \hat{T}=[(1-\beta)b]^{1/\beta} \quad \text{if} \quad \mu \to \hat{\mu}:=\frac{1}{\beta[b(1-\beta)^{1-\beta}]^{1/\beta}}.
\]
\begin{prop} \label{Prop function h for mu positive 2} Let us consider $b,\beta \in ]0,1[,$ 
Then, the following properties hold
\begin{itemize}
    \item[1.] There exists a unique $\hat{T}\in \, ]0,b^{1/\beta}[$ and $\hat{\mu}>\mu_{c}$ such that 
\begin{equation}\label{bif system 2 new}
\begin{split}
h_{\hat{\mu}}(\hat{T})=0,\\
h^{\prime}_{\hat{\mu}}(\hat{T})=0.
\end{split}
\end{equation}
Moreover, $\hat{T}$ and $\hat{\mu}$ are given by
\[
\hat{T}=T_{bif}:=\big[(1-\beta)b\big]^{1/\beta} \quad \text{and} \quad \hat{\mu}=\mu_{bif}:=\frac{1}{\beta \big[b(1-\beta)^{1-\beta}\big]^{1/\beta}}.
\]
\item[2.] The equation $h_{\mu}(T)=0$ has a fold bifurcation at $\displaystyle{(T_{bif},\mu_{bif})}$.
\end{itemize}
\end{prop}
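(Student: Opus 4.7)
\textbf{Proof plan for Proposition \ref{Prop function h for mu positive 2}.}

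\emph{Part 1 (existence, uniqueness, explicit formulas).} My plan is to solve the algebraic system $\eqref{bif system 2 new}$ by eliminating $\mu$. From $h_{\mu}(T)=0$ with $T>0$ and $T^\beta<b$, one can solve $\mu=T^{\beta-1}/(b-T^\beta)$. Independently, $h'_{\mu}(T)=0$ gives $\mu=\beta T^{\beta-1}/(b-(1+\beta)T^\beta)$ (the denominator being nonzero at the sought-after point, which must be checked a posteriori). Equating these two expressions and cancelling $T^{\beta-1}$ leads immediately to the linear relation
\[
b-(1+\beta)T^\beta = \beta\bigl(b-T^\beta\bigr),
\]
whose unique positive solution is $T^\beta=b(1-\beta)$, i.e. $\hat T = [b(1-\beta)]^{1/\beta}$. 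Substituting back, a direct simplification gives the stated formula for $\hat\mu$. Since $0<1-\beta<1$, it is clear that $\hat T\in\,]0,b^{1/\beta}[\,$, and the inequality $\hat\mu>\mu_c$ is the content of the remark preceding the proposition. Uniqueness follows at once from the fact that the elimination procedure above yields a single value of $T^\beta$.

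\emph{Part 2 (fold/saddle-node bifurcation).} My plan is to apply the standard saddle-node bifurcation theorem to $F(T,\mu):=h_{\mu}(T)=\mu(T^\beta-b)T+T^\beta$ at $(\hat T,\hat\mu)$. By construction, $F(\hat T,\hat\mu)=0$ and $F_T(\hat T,\hat\mu)=0$, so I need to verify the two non-degeneracy conditions $F_{TT}(\hat T,\hat\mu)\neq 0$ and $F_\mu(\hat T,\hat\mu)\neq 0$. For the transversality condition I would compute
\[
F_\mu(\hat T,\hat\mu)=(\hat T^\beta-b)\hat T=\bigl(b(1-\beta)-b\bigr)\hat T=-b\beta\,\hat T<0.
\]
For $F_{TT}$, using $F_{TT}(T,\mu)=\beta T^{\beta-2}\bigl[(\beta+1)\mu T+\beta-1\bigr]$, I would substitute $\hat T^\beta=b(1-\beta)$ and the explicit formula for $\hat\mu$, simplify $(\beta+1)\hat\mu\hat T=(1-\beta^2)/\beta$, and conclude
\[
F_{TT}(\hat T,\hat\mu)= \hat T^{\beta-2}(1-\beta)>0.
\]
With both conditions verified, the Implicit Function Theorem applied to $F(T,\mu)=0$ (after switching the roles of $T$ and $\mu$, since $F_\mu\neq 0$) produces a smooth curve $\mu=\mu(T)$ through $(\hat T,\hat\mu)$ whose derivative vanishes and whose second derivative
\[
\mu''(\hat T)=-\frac{F_{TT}(\hat T,\hat\mu)}{F_\mu(\hat T,\hat\mu)}=\frac{\hat T^{\beta-2}(1-\beta)}{b\beta\hat T}>0
\]
is nonzero. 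This is exactly the normal form of a fold, so for $\mu$ slightly less than $\hat\mu$ there are two solutions of $h_\mu(T)=0$ near $\hat T$ which merge at $\hat\mu$ and disappear for $\mu>\hat\mu$ (or vice versa, according to the sign convention).

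\emph{Main obstacle.} None of the steps are deep; the most error-prone part will be the bookkeeping in the computation of $(\beta+1)\hat\mu\hat T$, since the exponents $1/\beta$ and $(1-\beta)/\beta$ must be combined correctly to yield the clean value $(1-\beta^2)/\beta$. Once that identity is in place, the fact that both non-degeneracy conditions hold with definite signs (and, in particular, that the constructed zero is genuinely a fold and not a cusp) is immediate.
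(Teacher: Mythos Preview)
Your proof is correct and follows essentially the same route as the paper. In Part~1 the paper eliminates $\mu b$ directly by equating $T^{\beta-1}(1+\mu T)$ with $T^{\beta-1}(\beta+\mu(1+\beta)T)$ (both equal $\mu b$) rather than solving each equation separately for $\mu$, but this is a trivial algebraic variation; in Part~2 the paper verifies the same two non-degeneracy conditions $\partial_T^2 h(\hat T,\hat\mu)>0$ and $\partial_\mu h(\hat T,\hat\mu)=-\beta b\hat T<0$ and then quotes the fold-bifurcation normal form, exactly as you do (without the extra $\mu''$ framing).
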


\begin{proof}

Let us consider the system
\begin{equation} \label{bif system on h}
\begin{split}
h_{\hat{\mu}}(\hat{T})=0,\\
h^{\prime}_{\hat{\mu}}(\hat{T})=0.
\end{split} \quad \Leftrightarrow \quad \begin{split}
T^{\beta}\big(1+\mu T\big)=\mu b T ,\\
T^{\beta-1}(\beta+\mu(1+\beta)T)=\mu b.
\end{split}
\end{equation}
As we know, the existence of zeros of $h^{\prime}_{\mu}(T)$ is only possible if $\mu\geq\mu_{c}$. Moreover, from expressions \eqref{bif system on h} we have
\[
T^{\beta}\big(1+\mu T\big)=T^{\beta}(\beta+\mu(1+\beta)T)\quad \Leftrightarrow \quad  T^{\beta}\big(1-\beta-\mu \beta T\big)=0 \quad \Leftrightarrow \quad  \hat{T}(\mu)=\frac{1-\beta}{\mu \beta}.
\]
By substituting this (unique) value  in the first equation in \eqref{bif system on h} it holds
\[
[\hat{T}(\mu)]^{\beta}=(1-\beta)b \quad \Rightarrow \quad \hat{T}(\mu)=\hat{T}:=((1-\beta)b)^{1/\beta}.
\]
In consequence
\[
\hat{\mu}=\frac{1-\beta}{\beta \hat{T}}=\frac{1}{\beta \big[b(1-\beta)^{1-\beta}\big]^{1/\beta}}. 
\]
A direct computation shows that $\mu_{c}<\hat{\mu}$. This proves the first part of the statement.

2. Recall that a parametric equation $f(x,\mu)=0$, with $f:I\times W \subset \mathbb{\R}^{2}\to \mathbb{R}$ $f\in C^{2}(I\times W)$ has a fold bifurcation at the point $(x_{0},\mu_{0})$ if 
\[
f(x_{0},\mu_{0})=0, \quad \partial_{x}f(x_{0},\mu_{0})=0, \quad \partial^{2}_{x}f(x_{0},\mu_{0})\neq 0, \quad \text{and} \quad \partial_{\mu}f(x_{0},\mu_{0})\neq 0.
\]
In such a case, there exist two intervals $I_{-}:=]\mu_{-},\mu_{0}[$, $I_{+}:=]\mu_{0},\mu_{+}[$ and $\varepsilon>0$ such that
\begin{itemize}
    \item[a.] If $\partial^{2}_{x}f(x_{0},\mu_{0})>0,$ and $\partial_{\mu}f(x_{0},\mu_{0})< 0$, then:
    \begin{itemize}
    \item[$\dagger$)] $f(x,\mu)$ has no zeros if $x \in \, ]x_{0}-\varepsilon,x_{0}+\varepsilon[$ and $\mu \in I_{-}$.
        \item[$\dagger$)] $f(x,\mu)$ has exactly two zeros if $x \in \, ]x_{0}-\varepsilon,x_{0}+\varepsilon[$ and $\mu \in I_{+}$.
    \end{itemize}
  \vspace{0.5 cm}  
\item[b.] If $\partial^{2}_{x}f(x_{0},\mu_{0})$ or $\partial_{\mu}f(x_{0},\mu_{0}),$ have reversed sign, then the previous conclusion is similar.
\end{itemize}

For the parametric equation 
\[
h_{\mu}(T)=0, \quad \Leftrightarrow \quad h_{\mu}=h(T,\mu)=0,
\]
with  $h:]0,b^{1/\beta}[\times ]0,\infty[ \to \mathbb{R}$, $(T,\mu)\to h(T,\mu)$ we have $\displaystyle{h(\hat{T},\hat{\mu})=0,}$ $\displaystyle{\partial_{T}h(\hat{T},\hat{\mu})=0}$ but also
\[
\partial^{2}_{T}h(\hat{T},\hat{\mu})=\frac{b}{\big[b(1-\beta)^{1-\beta}\big]^{2/\beta}}>0, \quad \text{and} \quad \partial_{\mu}h_{\mu}(\hat{T},\hat{\mu})=-\beta b \hat{T}<0.
\]
In consequence, there exists two intervals $I_{-}:=]\mu_{-},\hat{\mu}[$, $I_{+}:=]\hat{\mu},\mu_{+}[$ and $\varepsilon>0$ such that
    \begin{itemize}
    \item[$\ddagger$)] $h(T,\mu)$ has no zeros if $T \in \, ]\hat{T}-\varepsilon,\hat{T}+\varepsilon[$ and $\mu \in I_{-}$.
        \item[$\ddagger$)] $h(T,\mu)$ has exactly two zeros if $T \in \, ]\hat{T}-\varepsilon,\hat{T}+\varepsilon[$ and $\mu \in I_{+}$.
    \end{itemize}
\end{proof}
\begin{figure}[t]
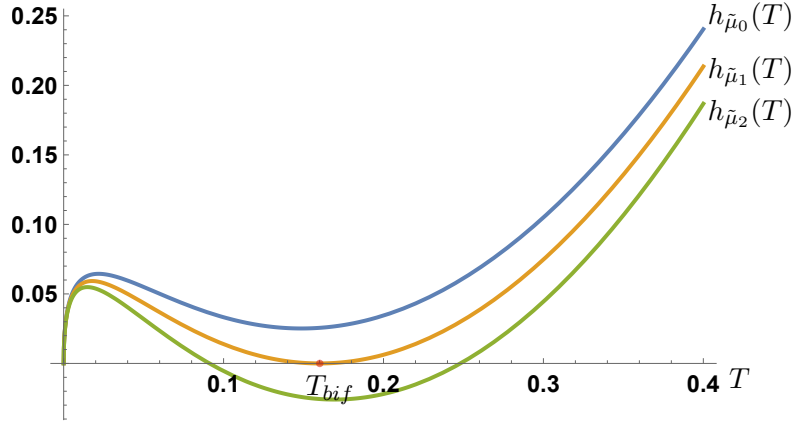

\centering
\begin{overpic}[width = 0.6\textwidth, tics = 5]{Bifurcationgraphnew
}
\put(98.4,56.1){$h_{\tilde{\mu}_{0}}(T)$}
\put(98.4, 48.5){$h_{\tilde{\mu}_{1}}(T)$}
\put(98.4, 42.5){$h_{\tilde{\mu}_{2}}(T)$}
\put(41.5, 3.5){$T_{bif}$}
\put(101.4, 4.5){$T$}
\end{overpic}
\caption{Graph of $h_{\mu}(T)$ with parameters $b=0.8$, and $\beta=0.5$. for $T\in \, ]0,5b^{1/\beta}/8]$. Three different values of $\tilde{\mu}$ are considered: $\tilde{\mu}_{n}=5.85+0.4 n$, $n \in \left\{0,1,2\right\}.$ Notice that $T_{bif}=4/25$ and $\tilde{\mu}_{1}= \mu_{bif}=25/4$. There are no solutions of  $h_{\tilde{\mu}_{0}}(T)=0$ and exactly two solutions of $h_{\tilde{\mu}_{2}}(T)=0$. }
\label{fig: bifurcation graph}
\end{figure}
We end this appendix by pointing out this new result about the solutions of $(\blacktriangle)$
\begin{remark}
By the monotonicity properties of $h_{\mu}^{\prime \prime}(T)$, it is known that $T_{R}(\mu)$ is the only critical point of $h_{\mu}(T)$ for $\mu>\mu_{c}$ such that 
$h_{\mu}^{\prime \prime}(T_{R})>0$.  Therefore, by the $C^{2}$-continuity of $h_{\mu}$ with respect to the parameter $\mu$ we have
\[
T_{R}(\mu)\to T_{bif} \quad \text{as} \quad \mu \to \mu_{bif},
\]
implying that $\displaystyle{H_{R}(\mu_{bif})=0.}$
\end{remark}
\begin{prop}
Let us consider the equation $(\blacktriangle)$. Assume that
\[
\mu_{c}:=\Big(\frac{1}{b}\Big(\frac{1-\beta}{1+\beta}\Big)^{\beta-1}\Big)^{\frac{1}{\beta}}<\mu \leq \mu_{bif}=\frac{1}{\beta \big[b(1-\beta)^{1-\beta}\big]^{1/\beta}}.
\]
Define
\[
H_L:=h_{\mu}(T_{L}(\mu)) \quad \text{and} \quad H_R:=h_{\mu}(T_{R}(\mu)),
\]
where $\displaystyle{T_{L,R}}(\mu)$ are only the  two critical points of $h_{\mu}(T)$.
\begin{itemize}
    \item[I)] If $ h_{0}=H_R$, then there are exactly two solutions of $(\blacktriangle)$ for all $T \in \, [0,b^{1/\beta}]$.
    \item[II)] If $H_{R}<h_{0}< H_{L}$, then there are exactly three solutions of $(\blacktriangle)$ for all $T \in \, [0,b^{1/\beta}]$.
    \item[III)]  If $H_{L}= h_{0}$, there are exactly two solutions of $(\blacktriangle)$ for all $T \in \, [0,b^{1/\beta}]$.
    \item[IV)]  If $0\leq h_{0}< H_R$ or $H_{L}< h_{0}$, there is exactly one solution of $(\blacktriangle)$ for all $T \in \, [0,b^{1/\beta}]$.
\end{itemize}
Finally, there exist $\varepsilon_{1,2}>0$ and an interval $\displaystyle{I_{+}:=]\mu_{bif},\mu_{bif}+\varepsilon_{1}[}$ such that, if $-\varepsilon_{2}<h_{0}\leq 0$ the equation $(\blacktriangle)$ admits exactly two solutions for all $T \in \, [0,b^{1/\beta}]$.
\end{prop}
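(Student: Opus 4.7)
The strategy is to exploit the graph of $h_\mu$ on $[0,b^{1/\beta}]$, which the previous analysis in the appendix has completely determined: $h_\mu(0)=0$, $h_\mu(b^{1/\beta})=b$, and in between $h_\mu$ is strictly increasing on $[0,T_L]$ with $h_\mu(T_L)=H_L$, strictly decreasing on $[T_L,T_R]$ with $h_\mu(T_R)=H_R$, and strictly increasing on $[T_R,b^{1/\beta}]$. Since $\mu_c<\mu\leq \mu_{bif}$, we further have $0\leq H_R<H_L<b$, with $H_R=0$ precisely when $\mu=\mu_{bif}$.

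For the four cases I would then simply apply the Intermediate Value Theorem together with strict monotonicity on each of the three monotone pieces, and tally. Concretely, for Case II ($H_R<h_0<H_L$) the level line $y=h_0$ meets the graph exactly once in $(0,T_L)$ (since $0=h_\mu(0)<h_0<H_L$), exactly once in $(T_L,T_R)$ (since $H_R<h_0<H_L$), and exactly once in $(T_R,b^{1/\beta})$ (since $H_R<h_0<H_L<b$): three solutions. Case IV follows in the same way, using that the level $h_0$ is either above $H_L$ (so can be crossed only in the last monotone increasing piece, provided $h_0\leq b$) or below $H_R$ (crossed only on $[0,T_L]$). The boundary Cases I and III are the tangent cases at $T_R$ and $T_L$ respectively: at those critical points strict monotonicity has degenerated to a double root, hence a single crossing locally, while on the remaining monotone pieces IVT again gives exactly one extra transverse solution. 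The key subtlety I would be careful about is not double-counting the tangent root at $T_{L,R}$ as two solutions in adjacent subintervals; this is handled cleanly by fixing the convention that $T_L$ belongs to $[0,T_L]$ and $T_R$ to $[T_L,T_R]$, say, and then observing strict monotonicity on the other side.

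For the final assertion concerning $\mu$ slightly larger than $\mu_{bif}$, I would argue by continuity of the critical values. The functions $\mu\mapsto T_{L,R}(\mu)$ are continuous (by the implicit function theorem applied to $h_\mu'(T)=0$, since $h_\mu''(T_{L,R})\neq 0$ for $\mu>\mu_c$), hence so are $H_{L,R}(\mu)$. Since $H_R(\mu_{bif})=0<H_L(\mu_{bif})$, there exist $\varepsilon_1,\varepsilon_2>0$ such that for $\mu\in I_+=(\mu_{bif},\mu_{bif}+\varepsilon_1)$ one has $H_R(\mu)<-\varepsilon_2<0<H_L(\mu)$. Then for any $h_0\in(-\varepsilon_2,0]$ one has $H_R<h_0<H_L$, and the shape analysis above yields exactly one crossing in $(T_L,T_R)$ and one in $(T_R,b^{1/\beta})$, while on $[0,T_L]$ the graph lies in $[0,H_L]$ and cannot reach a strictly negative value (and if $h_0=0$ the only contribution from that piece is the point $T=0$, which I would exclude as being the trivial root outside the admissible-equilibria interpretation, or alternatively restrict the statement to $h_0<0$ as the bifurcation branch requires).

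The main obstacle is not any single hard estimate, but rather bookkeeping: one must make sure that in the boundary Cases I and III the local minimum/maximum gives exactly one (double) solution and not zero or two, and that in Case IV one does not miss a crossing in the ``wrong'' monotone piece. This is essentially a careful piecewise IVT argument, and the analytic content is entirely carried by the monotonicity facts and the inequalities $0\leq H_R<H_L<b$ already established in the appendix.
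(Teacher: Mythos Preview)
Your proposal is correct and follows essentially the paper's implicit approach: the proposition is stated in the appendix without an explicit proof, relying on the shape analysis of $h_\mu$ developed just before (the two critical points $T_L<T_R$ in $(0,b^{1/\beta})$, the signs of $h_\mu''$ at them, and the inequalities $0\le H_R<H_L<b$), so your piecewise IVT/monotonicity argument for I--IV is exactly what the paper has in mind. For the final clause the paper packages the same content as a fold bifurcation at $(T_{bif},\mu_{bif})$ (the preceding Proposition in Appendix~A), whereas you argue via continuity of $\mu\mapsto H_R(\mu)$ and $H_R(\mu_{bif})=0$; these are equivalent, and in fact the appendix even records $H_R'(\mu)=(T_R^\beta-b)T_R<0$, which makes your version immediate. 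The minor caveats you flag (the implicit restriction $h_0\le b$ in Case~IV, and the status of $T=0$ when $h_0=0$) are genuine imprecisions in the stated proposition rather than gaps in your argument.
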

\begin{remark}
In the current bibliography about the dynamics in a tumor immune system, the most usual growth function for the tumor cells is the Logistic growth function. In such a case
\[
h_{\mu}(T,\mu)=\mu T^{2}+(1-\mu b)T.
\]
Moreover, 
\[
\lim_{\beta\to 1}\mu_{c}=\frac{1}{b}, \quad \qquad \lim_{\beta\to 1}\mu_{bif}=\frac{1}{b}, \quad \text{\and} \quad 
0=T_{L}(1/b)=T_{\ast}\big(1/b \big)=T_{R}(1/b).
\]
This means that there are fewer equilibrium points for the logistic growth function case. In fact:
\end{remark}



\section*{Appendix B}
This appendix presents some results on the linear delay system
\[
(\underline{\dagger}) \qquad \dot{Y}(t)=AY(t)+BY(t-\tau_1)+CY(t-\tau_2),
\]
with  $A,B, C\in \mathbb{M}_{2\times 2}$ constant real matrices, $Y\in \mathbb{R}^{2}$ and $\tau_i \geq 0$. In the following:
\begin{itemize}
    \item[$\triangleright$] $\det A$ denotes the determinant of $A$.
    \item[$\triangleright$] $tr A$ denotes the trace of $A$. 
    \item[$\triangleright$] $(a^1|b^{2})$ is the matrix with the first column from $A$ and the second column from $B$.
\end{itemize}
Define the quadratic polynomial $P(\lambda,z,w)$ in $(\lambda,z,w):$
\[
\mathcal{P}(\lambda,z,w):=\lambda^2-(tr A) \lambda+\det A+(\det B)z^{2}+(\hat{c}-(tr B)\lambda)z+(\det C)w^2-\hat{d}(\lambda,z) w,
\]
where
\[
\hat{c}=\det (a^1|b^2)+\det (b^1|a^2) \quad \text{and} \quad 
\hat{d}(\lambda,z)=\det (c^1|m^2(\lambda,z))+\det (m^1(\lambda,z)|c^2), 
\]
with $\displaystyle{M(\lambda,z)=\lambda I_2-A-z B}$.
\begin{lemma}
Let $\lambda \in \C$ and $Y_{\ast}\neq 0_{\mathbb{R}^2}$. Then $\displaystyle{Y(t)=e^{\lambda t}Y_{\ast}}$ is a solution of $\displaystyle{(\underline{\dagger})}$ if $\lambda$ satisfies the associated characteristic equation
\[
\mathcal{P}(\lambda,e^{-\lambda \tau_1},e^{-\lambda \tau_2})=0.
\]
In particular, if $\tau=\tau_1=\tau_2$ the characteristic equation $\displaystyle{\mathcal{P}(\lambda,e^{-\lambda \tau},e^{-\lambda \tau})=0}$ is given by
\[
\lambda^2-(tr A) \lambda+\det A+(\det (B+C))e^{-2\lambda \tau}+(\tilde{c}-(tr( B+C)\lambda)e^{-\lambda \tau}=0,
\]
where, $\displaystyle{\tilde{c}=\det (a^1|(b+c)^2)+\det ((b+c)^1|a^2) }$.
\end{lemma}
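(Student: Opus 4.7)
The plan is to verify the lemma by substituting the exponential ansatz into the delay system and then computing a $2 \times 2$ determinant using column multilinearity.

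First I would plug $Y(t) = e^{\lambda t} Y_{\ast}$ into $(\underline{\dagger})$. Differentiation gives $\dot Y(t) = \lambda e^{\lambda t} Y_{\ast}$, while the delayed terms become $B Y(t-\tau_1) = e^{-\lambda \tau_1} e^{\lambda t} B Y_{\ast}$ and $C Y(t-\tau_2) = e^{-\lambda \tau_2} e^{\lambda t} C Y_{\ast}$. After cancelling the nonzero scalar factor $e^{\lambda t}$, the identity reduces to the linear system
\begin{equation*}
\bigl(\lambda I_2 - A - e^{-\lambda \tau_1} B - e^{-\lambda \tau_2} C \bigr) Y_{\ast} = 0,
\end{equation*}
which admits a nontrivial solution $Y_{\ast} \neq 0$ precisely when $\det\bigl(\lambda I_2 - A - e^{-\lambda \tau_1} B - e^{-\lambda \tau_2} C\bigr) = 0$. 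The content of the lemma then reduces to the algebraic identity $\det\bigl(\lambda I_2 - A - z B - w C\bigr) = \mathcal{P}(\lambda, z, w)$ for $z = e^{-\lambda \tau_1}$, $w = e^{-\lambda \tau_2}$.

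Next I would compute this determinant by two rounds of column multilinearity. The basic identity for any $2 \times 2$ matrices $X, Y$ is $\det(X - \alpha Y) = \det X - \alpha\bigl(\det[x^1|y^2] + \det[y^1|x^2]\bigr) + \alpha^2 \det Y$. Applying this with $X = M(\lambda, z) := \lambda I_2 - A - z B$ and $\alpha Y = w C$ gives
\begin{equation*}
\det\bigl(M(\lambda, z) - wC\bigr) = \det M(\lambda, z) + (\det C) w^2 - \hat d(\lambda, z)\, w,
\end{equation*}
with $\hat d(\lambda, z) = \det[m^1(\lambda,z)|c^2] + \det[c^1|m^2(\lambda,z)]$, exactly matching the definition in the paper. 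A second application to $M(\lambda, z) = (\lambda I_2 - A) - z B$ yields $\det M(\lambda, z) = \lambda^2 - (tr\,A)\lambda + \det A + (\det B) z^2 + \bigl(\hat c - (tr\,B)\lambda\bigr) z$, where the coefficient of $-z$ is computed from $\det[\lambda e_1 - a^1 | b^2] + \det[b^1 | \lambda e_2 - a^2] = \lambda(tr\,B) - \hat c$. Adding these contributions reproduces $\mathcal{P}(\lambda, z, w)$ verbatim.

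For the special case $\tau_1 = \tau_2 = \tau$, the delay system collapses to $\dot Y(t) = A Y(t) + (B+C) Y(t-\tau)$, and the same ansatz produces $\det\bigl(\lambda I_2 - A - e^{-\lambda \tau}(B+C)\bigr) = 0$. Applying the column-expansion identity once more, this time with $B+C$ in place of $B$ and the single exponential $e^{-\lambda \tau}$ in place of $z$, yields directly
\begin{equation*}
\lambda^2 - (tr\,A)\lambda + \det A + \bigl(\det(B+C)\bigr) e^{-2\lambda\tau} + \bigl(\tilde c - (tr(B+C))\lambda\bigr) e^{-\lambda \tau} = 0,
\end{equation*}
with $\tilde c = \det[a^1|(b+c)^2] + \det[(b+c)^1|a^2]$, as claimed. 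No genuine obstacle is expected; the only care required is bookkeeping in the multilinear expansion and double-checking the sign conventions of $\hat c$, $\hat d$ and $\tilde c$ to ensure they agree with the paper's definitions.
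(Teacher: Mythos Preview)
Your proposal is correct and follows essentially the same approach as the paper: substitute the exponential ansatz to reduce to $\det(\lambda I_2 - A - e^{-\lambda\tau_1}B - e^{-\lambda\tau_2}C)=0$, then expand this determinant by column multilinearity, first peeling off the $wC$ block and then the $zB$ block. You in fact carry out more of the bookkeeping explicitly (the second expansion for $\det M(\lambda,z)$ and the $\tau_1=\tau_2$ specialization) than the paper, which leaves these as ``straightforward computations''.
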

\begin{proof}
We seek exponentially growing solutions of $\displaystyle{(\underline{\dagger})}$ of the form: $\displaystyle{Y(t)=e^{\lambda t}Y_{\ast}}$, with $Y_{\ast}\neq 0_{\mathbb{R}^2}$ and $\lambda \in \C$. Substituting into the equation $\displaystyle{(\underline{\dagger})}$, $\lambda$ satisfies
\[
e^{\lambda t}\big[\lambda I_2-A-e^{-\lambda \tau_1} B-e^{-\lambda \tau_2}C\big]Y_{\ast}=0 \quad \Rightarrow \quad 
\det\big(M-N\big)=0,
\]
with
\[
M=M(\lambda,z):=\lambda I_2-A-z B, \quad \text{and} \quad N:=e^{-\lambda \tau_2}C.
\]
From this, straightforward computations show that
\[
\det (M-N )=\det(M)+\det(N)-\det (n^1|m^2)-\det (m^1|n^2)
\]
finished the proof.
\end{proof}
\subsection*{Stability results and bifurcation of periodic solution for equal time-delay} Assume that for system $\displaystyle{(\underline{\dagger})}$ we have $\tau_1=\tau_2=\tau$, i.e., we have the delayed linear system
\[
(\underline{\ddagger}) \quad \dot{Y}(t)=AY(t)+(B+C)Y(t-\tau),
\]
with  $A,B, C\in \mathbb{M}_{2\times 2}$ constant real matrices, $Y=Y(t)\in \mathbb{R}^{2}$ and $\tau\geq 0$. Additionally, suppose that the corresponding characteristic equation is of the form
\[
\hat{\mathcal{P}}(\lambda,\tau)=\lambda^{2}+(\lambda_1+\lambda_2)\lambda+\lambda_1\lambda_2-Ne^{\tau \lambda},
\]
with $\lambda_i \in \mathbb{R}$ and $N\in \mathbb{R}^{-}$. In \cite{FREEDMAN} the authors present the following theorems that provide sufficient conditions under which the trivial solution $Y(t)=0_{\mathbb{R}^2}$ will be asymptotically stable, and the appearance of stable periodic limit cycles emerging in specific values of $\tau.$ 
\begin{miTeorema}[\textbf{A}]
Assume that $\displaystyle{\lambda_{1}+\lambda_{2}<0}$ and $\displaystyle{N<\lambda_1\lambda_{2}<-N.}$ Then $Y(t)=0_{\mathbb{R}^2}$ is asymp\-totically stable for all values of $\tau$ satisfying $\displaystyle{0\leq \tau<\frac{\lambda_{1}+\lambda_2}{N}}.$
\end{miTeorema}

\begin{miTeorema}[\textbf{B}] Assume that $\displaystyle{\lambda_{1}\lambda_{2}>-N}$. Then $Y(t)=0_{\mathbb{R}^2}$ is asymptotically stable for all $\tau \geq 0$.
\end{miTeorema}

\begin{miTeorema}[\textbf{C}](A Hopf bifurcation theorem)
Assume that 
\[
 \lambda_{1}+\lambda_{2}<0 \quad \text{and}\quad  N<\lambda_{1}\lambda_2<-N
\]
Then, there exists $\tau_{c}>0$ given as the smallest value of $\tau$ for which
\[
\hat{\mathcal{P}}(\lambda(\tau),\tau)=0,
\]
admits a solution $\lambda(\tau)=iy(\tau)$. Moreover, for $\tau<\tau_c$, $Y(t)=0_{\mathbb{R}^2}$ is asymptotically stable, while if $\tau>\tau_c$, $Y(t)=0_{\mathbb{R}^2}$ is unstable. Furthermore, as Furthermore, as  $\tau$  increases through $\tau_{c}$, $Y_{2}(t)=0$ bifurcates into ``small-amplitude'' periodic solutions, which are stable, i.e., $Y_{2}(t)=0$ loses its stability and undergoes a Hopf bifurcation.
\end{miTeorema}
\begin{remark}
 Equation $\hat{\mathcal{P}}(x+iy,\tau)=0$ can be divided into real and imaginary parts as follows
\begin{equation*}
\begin{split}
(x-\lambda_1)(x-\lambda_2)-y^2-N\cos(\tau y)e^{-\tau x}&=0,\\
2xy-(\lambda_1+\lambda_2)y+N\sin(\tau y)e^{-\tau x}&=0.
\end{split}
\end{equation*}
\end{remark}

\bibliographystyle{elsarticle-num}
\bibliography{bibliotumor}
\end{document}